  \newcommand{\textcyr}[1]{{\fontencoding{OT2}\fontfamily{wncyr}\fontseries{m}\fontshape{n}
     \selectfont #1}}
\newcommand{\Sha}{{\mbox{\textcyr{Sh}}}}
\newtheorem{theorem}{Theorem}[section]
\newtheorem{lemma}[theorem]{Lemma}
\newtheorem{proposition}[theorem]{Proposition}
\newtheorem{corollary}[theorem]{Corollary}
\newtheorem{application}[theorem]{Application}
\newtheorem{predefinition}[theorem]{Definition}
\newenvironment{definition}{\begin{predefinition}\rm}{\end{predefinition}}
\newtheorem{preremark}[theorem]{Remark}
\newenvironment{remark}{\begin{preremark}\rm}{\end{preremark}}
\newtheorem{preexample}[theorem]{Example}
\newenvironment{example}{\begin{preexample}\rm}{\end{preexample}}
\newcommand{\GG}{\mathbb{G}}
\newcommand{\CC}{\mathcal{C}}
\newcommand{\NN}{\mathbb{N}}
\newcommand{\EE}{\mathbb{E}}
\newcommand{\ZZ}{\mathbb{Z}}
\newcommand{\FF}{\mathbb{F}}
\newcommand{\DD}{\mathbb{D}}
\newcommand{\BB}{\mathbb{B}}
\newcommand{\OO}{\mathcal{O}}
\newcommand{\PP}{\mathcal{P}}
\newcommand{\cU}{{\mathcal U}}
\newcommand{\cF}{{\mathcal F}}
\newcommand{\Hdr}{H^1_{\rm dR}(X_q)}
\newcommand{\Hone}{H^1(X_q, {\mathcal O})}
\newcommand{\Hw}{H^0(X_q, \Omega^1)}
\newcommand{\BdR}{{B_\text{\rm dR}^1}}
\newcommand{\ZdR}{{Z_\text{\rm dR}^1}}
\newcommand{\dx}{{\,dx}}
\newcommand{\dy}{{\,dy}}
\newcommand{\Ttwo}{\langle \times 2 \rangle}
\DeclareMathOperator{\Hom}{Hom}
\DeclareMathOperator{\Span}{Span}
\DeclareMathOperator{\car}{\mathcal{C}}
\DeclareMathOperator{\dime}{dim}
\DeclareMathOperator{\Ker}{Ker}
\title{The Ekedahl-Oort type of Jacobians of Hermitian curves}
\date{}
\author{Rachel Pries \and Colin Weir}
\begin{document}

\maketitle

\begin{abstract}
The Ekedahl-Oort type is a combinatorial invariant of a principally polarized abelian variety $A$
defined over an algebraically closed field of characteristic $p > 0$.
It characterizes the $p$-torsion group scheme of $A$ up to isomorphism. 
Equivalently, it characterizes (the mod $p$ reduction of) the Dieudonn\'e module of $A$ or the 
de Rham cohomology of $A$ as modules under the Frobenius and Vershiebung operators.

There are very few results about which Ekedahl-Oort types occur for Jacobians of curves.
In this paper, we consider the class of Hermitian curves, indexed by a prime power $q=p^n$, 
which are supersingular curves well-known for their exceptional arithmetic properties.
We determine the Ekedahl-Oort types of the Jacobians of all Hermitian curves.
An interesting feature is that their indecomposable factors are determined by 
the orbits of the multiplication-by-two map on $\ZZ/(2^n+1)$, and thus do not depend on $p$.
This yields applications about the decomposition of the Jacobians of Hermitian curves up to isomorphism.

Keywords: Hermitian curve, maximal curve, Jacobian, supersingular, Dieudonn\'e module, $p$-torsion, de Rham cohomology, Ekedahl-Oort type, a-number, Selmer group.\\
MSC: 11G20, 14G50, 14H40.
\end{abstract}

\section{Introduction} \label{Introduction}

A crucial fact about a principally polarized abelian variety $A$ 
defined over an algebraically closed field $k$ of characteristic $p >0$
is that the multiplication-by-$p$ morphism of $A$ is inseparable.  
If $A$ has dimension $g$, then $[p]$ factors as $V \circ F$ 
where the Frobenius morphism $F$ is purely inseparable of degree $p^g$
and where $V$ is the Verschiebung morphism.
%At one extreme, $V$ is separable and the abelian variety is ordinary.
%At the other extreme, $V$ is purely inseparable and $A$ is has $p$-rank $0$.
The isomorphism class of the $p$-torsion group scheme $A[p]$ is determined by the interaction between $F$ and $V$.
It can be characterized by its Ekedahl-Oort type or by the structure of its Dieudonn\'e module.
There are many deep results about the stratification of the moduli space ${\mathcal A}_g$ of principally polarized abelian varieties by Ekedahl-Oort type, 
see especially \cite{O:strat} and \cite{EVdG}.

In contrast, there are almost no results about which Ekedahl-Oort types occur for Jacobians of curves.  
There are existence results for Ekedahl-Oort types of low codimension, for which the 
Jacobians are close to being ordinary \cite{Pr:large}.
There is a complete classification for hyperelliptic curves when $p=2$ \cite{EP}.

In this paper, we determine the Ekedahl-Oort type of the Hermitian curve $X_q$ for every prime power $q$, see Theorem \ref{Torbit}.
More precisely, we determine the structure and multiplicity of each indecomposable factor 
of the Dieudonn\'e module for the $p$-torsion group scheme of the Jacobian of $X_q$.
For the proof, we compute the module structure of $\Hdr$ under $F$ and $V$.
The Hermitian curves are remarkable for their properties over finite fields,
but the Ekedahl-Oort type and the Dieudonn\'e module are geometric invariants.  
Thus we work over $k=\overline{\FF}_p$ throughout the paper.
 
This introduction contains: (1.1) a review of the arithmetic properties of Hermitian curves;
(1.2) a result of Ekedahl that is the starting point for this work;
(1.3) a description of the main result, Theorem \ref{Torbit};
(1.4) an overview of some applications of this result to questions about the isomorphism class of Jacobians of Hermitian curves, about Selmer groups, and 
about the supersingular locus of ${\mathcal A}_g$;
(1.5) a comparison with earlier work; and 
(1.6) an outline of the rest of the paper.

\subsection{Hermitian curves}

The Hermitian curves have received much scrutiny for their remarkable arithmetic properties and applications to combinatorics and coding theory.
For a prime power $q=p^n$, the {\it Hermitian curve} $X_q$ is the curve in $\PP^2$ defined over $\FF_p$ by the homogenization of the equation
\[
X_q: y^q+y=x^{q+1}.
\]
The curve $X_q$ is smooth and irreducible with genus $g=q(q-1)/2$ and it has exactly one point $P_\infty$ at infinity.
The number of points on the Hermitian curve over $\FF_{q^2}$ is $\#X_q\left(\FF_{q^2}\right)=q^3+1$ and the curve $X_q$ is maximal over $\FF_{q^2}$ 
\cite[VI 4.4]{sti09}. 
In fact, $X_q$ is the unique curve of genus $g$ which is maximal over $\FF_{q^2}$ \cite{ruckstich}.
This implies that $X_q$ is the Deligne-Lusztig variety of dimension $1$ associated with the group $G={\rm PGU}(3,q)$ \cite[Proposition 3.2]{HansenDL}.
The automorphism group of $X_q$ is $G$, which has order $q^3(q^2-1)(q^3+1)$, see \cite[Equation 2.1]{GSX};
the Hermitian curves are the only exceptions to the bound of $16g^4$ for the order of the automorphism group of a curve in positive characteristic \cite{sti73}.
They can be characterized as certain ray class fields \cite{lauter}.

%The ramification index above $\infty$ is $q^3(q^2-1) and above 0 is $q^2-q+1$.
%Its character is $2\chi_{{\rm tr} -2 \chi_{\rm reg} + \chi_0 + \chi_\infty$.

The zeta function of $X_q$ is 
\[Z(X_q/\FF_q, t)=\frac{(1+qt^2)^g}{(1-t)(1-qt)},\]
\cite[Proposition 3.3]{HansenDL} and the only slope of the Newton polygon of the $L$-polynomial $L(t)=(1+qt^2)^g$ is $1/2$.
This means that $X_q$ is {\it supersingular} for every prime power $q$.  
The supersingular condition is equivalent to the condition that the Jacobian ${\rm Jac}(X_q)$ is isogenous to a product of supersingular elliptic curves \cite[Theorem 4.2]{O:sub}.  
It also implies that ${\rm Jac}(X_q)$ has no $p$-torsion points over $\overline{\FF}_p$.

\subsection{A result of Ekedahl} \label{Sekedahl}

It is well-known that the Jacobian of the Hermitian curve $X_p:y^p+y=x^{p+1}$ is {\it superspecial}, see Section \ref{Sprankanumber} for definitions.
Briefly, the superspecial condition is equivalent to the condition that the 
Jacobian ${\rm Jac}(X_p)$ is isomorphic to a product of supersingular elliptic curves \cite[Theorem 2]{oort75}, see also \cite[Theorem 4.1]{Nygaard}. 
Equivalently, (the mod $p$ reduction of) the Dieudonn\'e module of the $p$-torsion group scheme of ${\rm Jac}(X_p)$ is isomorphic to the sum of $g$ copies 
of the Dieudonn\'e module of a supersingular elliptic curve:
\begin{equation} \label{En=1}
\DD({\rm Jac}(X_p)) \simeq (\EE/\EE(F+V))^{g}.
\end{equation}
(Here $\EE=k[F,V]$ is the non-commutative ring generated by semi-linear operators $F$ and $V$ with the relations 
$FV=VF=0$ and $F \lambda = \lambda^p F$ and $\lambda V=V \lambda^p$ for all $\lambda \in k$
and $\EE(A_1, \ldots)$ denotes the left ideal of $\EE$ generated by $A_1, \ldots$).
The easiest way to prove that ${\rm Jac}(X_p)$ is superspecial is to show that the Cartier operator is the zero operator on $H^0(X_p, \Omega^1)$, 
which implies that the kernel of Frobenius $F$ is the kernel of Verschiebung $V$ on the Dieudonn\'e module. 

There is an upper bound $g \leq p(p-1)/2$ for the genus of a superspecial curve in characteristic $p$, \cite[Theorem 1.1]{Ekedahl} 
and this upper bound is realized by $X_p$.
For $n \geq 2$, it is thus impossible for $X_q$ to be superspecial.
 
\subsection{Main result}  

In this paper, we determine the $\EE$-module structure of the Dieudonn\'e module $\DD(X_q):=\DD({\rm Jac}(X_q)[p])$ for all prime powers $q=p^n$.
This is the same as determining the isomorphism class of the $p$-torsion group scheme of ${\rm Jac}(X_q)$.
In the main result, see Theorem \ref{Torbit}, we prove that the distinct indecomposable factors 
of $\DD(X_q)$ are in bijection with 
orbits of $\ZZ/(2^n+1) -\{0\}$ under $\Ttwo$ where $\Ttwo$ denotes multiplication-by-two.
The structure of each factor is determined by the combinatorics of the orbit, as explained in Section \ref{Sstructure2}.
In particular, the $a$-number of each factor is odd.
We also determine the multiplicities of the factors.
While these multiplicities depend on $p$, 
the structure of each indecomposable factor depends only on $n$.
Theorem \ref{Torbit} determines the Ekedahl-Oort type $\nu$ of ${\rm Jac}(X_{p^n})$, although an explicit formula for $\nu$ is not easy to write down for general $n$.
In particular, $\nu$ has $2^{n-1}$ {\it break points} where the behavior of the 
Ekedahl-Oort sequence switches between the states of being constant and increasing, see Section \ref{Seotype} and Corollary \ref{Ckeyvalue}.

Examples of $\DD(X_{p^n})$ for small $n$ appear in Section \ref{Sintuition} and Example \ref{Ecasen=4}.
When $n=2$, the $\Ttwo$ map on $\ZZ/5-\{0\}$ has one orbit $\{1,2,4,3\}$.
Theorem \ref{Torbit} implies that the Dieudonn\'e module of ${\rm Jac}(X_{p^2})$ decomposes into $g/2$ copies of the 
Dieudonn\'e module of a supersingular (but not superspecial) abelian surface:
\begin{equation} \label{En=2}
\DD(X_{p^2})= (\EE/\EE(F^2+V^2))^{g/2}.
\end{equation}

For one of the applications, we determine that the $\EE$-module $\EE/\EE(F+V)$ appears as a factor of $\DD(X_q)$ if and only if $n$ is odd, in which case it 
appears with multiplicity $\left(p(p-1)/2\right)^n$, see Corollary \ref{Canumber1}.

\subsection{Applications}

Theorem \ref{Torbit} gives partial information about the decomposition of ${\rm Jac}(X_q)$, up to isomorphism, into indecomposable abelian varieties, see Section \ref{SJacDec}.
For example, when $n$ is a power of $2$, we prove that the dimension of each factor in such a decomposition is a multiple of $n$.
For another application, let the {\it elliptic rank} of an abelian variety $A$ be the largest non-negative integer $r$ such that there exist elliptic curves 
$E_1, \ldots, E_r$ and an abelian variety $B$ of dimension $g-r$ and an isomorphism $A \simeq B \times (\times_{i=1}^r E_i)$ of abelian varieties without polarization. 

\begin{application} \label{Appellipticrank}
If $n$ is even, then the elliptic rank of ${\rm Jac}(X_{p^n})$ is $0$.
If $n$ is odd, then the elliptic rank of ${\rm Jac}(X_{p^n})$ is at most $\left(p(p-1)/2\right)^n$.
\end{application}

The second application is about the Selmer groups for the multiplication-by-$p$ isogeny of a constant elliptic curve $E$ over the function field of a Hermitian curve, 
see Section \ref{SSelmer}.
The third application is about Ekedahl-Oort strata with $a$-number just less than $g/2$ which 
intersect but are not contained in the supersingular locus of ${\mathcal A}_g$, see Section \ref{Ssupersingular}.

\subsection{Earlier work}

After finishing this research, we became aware of some other results about the cohomology of Hermitian curves.
In \cite{hj90}, the authors study filtrations of the crystalline cohomology of Hermitian curves with the motivation of understanding filtrations of Weyl modules of algebraic groups. 
In \cite{dum95, dum99}, Dummigan analyzes ${\rm Jac}(X_q)$ viewed as a constant abelian variety over the function field of $X_q$.
His motivation is to study the structure of the Tate-Shafarevich group $\Sha$ of ${\rm Jac}(X_q)$ and the determinant of the lattice ${\rm End}_{\FF_{q^2}}({\rm Jac}(X_q))$.
In particular, he proves that $\Sha$ is trivial if and only if $n \leq 2$ and the smallest power of $p$ annihilating $\Sha$ is $p^{\lfloor n/3\rfloor}$.
He uses the alternative equation $u^{q+1}+v^{q+1}+w^{q+1}=0$ for $X_q$ to find a basis for the crystalline cohomology of the lifting $X_q^*$ of $X_q$ over the Witt vectors
which is convenient for computing the action of $F$. 
As part of \cite{dum95}, Dummigan finds the structure of $\Hdr$ as an $\FF_{q^2}[G]$-module and as an $\FF_p[G]$-module.  

It appears that the blocks defined in Definition \ref{Dblock} are the indecomposable $\FF_{q^2}[G]$-modules of $\Hdr$.
It might be possible to cut Section \ref{SFV} of this paper by referring to \cite{dum95}. 
We decided to include the material in Section \ref{SFV} because the method in \cite{dum95}  
relies heavily on a property of the Hermitian curve which is quite rare, 
namely that there is a decomposition of $\Hdr$ into one-dimensional eigenspaces for a group of prime-to-$p$ automorphisms.
In contrast, the method in Section \ref{SFV} involving the action of $F$ and $V$ on $\Hdr$ can be used to compute the Ekedahl-Oort type for a wide class of Jacobians.  
In addition, our description of the combinatorial structure in terms of orbits of $\Ttwo$ may be easier to work with than 
the {\it circle diagrams} of  \cite[Section 7]{dum95}.

\subsection{Outline of paper}

Section \ref{Snotation} contains background material about $p$-torsion group schemes and the de Rham cohomology and some $p$-adic formulae.  
In Section \ref{Sintuition}, we give examples and explain the case $n=3$ in order to give a conceptual overview of the combinatorial structures found in the paper.
The action of $F$ and $V$ on $\Hdr$ is computed in Section \ref{SFV}.  A decomposition of $\Hdr$ into 
blocks permuted by $F$ and $V$ is developed in Section \ref{SHdr}.
Section \ref{Sthmorbit} contains the main theorem about the bijection between indecomposable factors of 
the Dieudonn\'e module and orbits of $\Ttwo$.  The applications are in Section \ref{Sapplication}.

The first author was partially supported by NSF grant DMS-11-01712. 
The second author was partially supported by NSERC and AITF.
We would like to thank J. Achter, A. Hulpke, and F. Oort for helpful conversations and the referees for their valuable comments.

\section{Notation and background} \label{Snotation}

\subsection{Classification of $p$-torsion group schemes}

\subsubsection{Frobenius and Verschiebung}

Suppose $A$ is a principally polarized abelian variety of dimension $g$ defined over $k$.
For example, $A$ could be the Jacobian of a $k$-curve of genus $g$.
Consider the multiplication-by-$p$ morphism $[p]:A \to A$ which is a finite flat morphism of degree $p^{2g}$.
It factors as $[p]=V \circ F$.  Here $F:A \to A^{(p)}$ is the relative Frobenius morphism 
coming from the $p$-power map on the structure sheaf; it is purely inseparable of degree $p^g$.  
The Verschiebung morphism 
$V:A^{(p)} \to A$ is the dual of $F_{A^{\rm dual}}$.  
%If $A$ is principally polarized, then ${\rm im}({\rm Fr})={\rm ker}({\rm Ver})$ 
%and ${\rm im}({\rm Ver})={\rm ker}({\rm Fr})$.

\subsubsection{The $p$-torsion group scheme}

The {\it $p$-torsion group scheme} of $A$, denoted $A[p]$, is the kernel of $[p]$.  
It is a finite commutative group scheme annihilated by $p$, again having morphisms $F$ and $V$.
The polarization of $A$ induces a symmetry on $A[p]$ as defined in \cite[5.1]{O:strat}; when $p >2$, this is 
an anti-symmetric isomorphism from $A[p]$ to the Cartier dual group scheme $A[p]^{{\rm dual}}$ of $A[p]$.
By \cite[9.5]{O:strat}, the $p$-torsion group scheme 
$A[p]$ is a polarized ${\rm BT}_1$ group scheme over $k$ 
(short for polarized Barsotti-Tate truncated level 1 group scheme), as defined in \cite[2.1, 9.2]{O:strat}.
The rank of $A[p]$ is $p^{2g}$.

Here is a brief summary of the classification \cite[Theorem 9.4 \&12.3]{O:strat} 
of polarized ${\rm BT}_1$ group schemes over $k$ in terms of Dieudonn\'e modules and Ekedahl-Oort type;
other useful references are \cite{Kraft} (unpublished - without polarization) and \cite{M:group} (for $p \geq 3$).
When $p=2$, there are complications with the polarization which are resolved in \cite[9.2, 9.5, 12.2]{O:strat}.

\subsubsection{Covariant Dieudonn\'e modules}

One can describe the group scheme $A[p]$ using (the modulo $p$ reduction of) the {\it covariant Dieudonn\'e module}, see e.g., \cite[15.3]{O:strat}.
This is the dual of the contravariant theory found in \cite{Demazure}.
Briefly, consider the non-commutative ring $\EE=k[F,V]$ generated by semi-linear operators $F$ and $V$ 
with the relations $FV=VF=0$ and $F \lambda = \lambda^p F$ and $\lambda V=V \lambda^p$ for all $\lambda \in k$.
Let $\EE(A_1, \ldots, A_r)$ denote the left ideal $\sum_{i=1}^r \EE A_i$ of $\EE$ generated by $\{A_i \mid 1 \leq i \leq r\}$.
The category of commutative group schemes over $k$ annihilated by $p$ is equivalent to the category of finite left $\EE$-modules.
Given a ${\rm BT}_1$ group scheme $\GG$ over $k$
we denote by $D(\GG)$ the Dieudonn\'e module of $\GG$.
If $\GG$ has rank $p^{2g}$, then $D(\GG)$ has dimension $2g$ as a $k$-vector space. 
For example, the Dieudonn\'e module of a supersingular elliptic curve is $\EE/\EE(F+V)$, \cite[Ex.\ A.5.4]{G:book}.

\subsubsection{The $p$-rank and $a$-number} \label{Sprankanumber}

Two invariants of (the $p$-torsion of) an abelian variety are the $p$-rank and $a$-number.
The {\it $p$-rank} of $A$ is $f=\dime_{\FF_p} \Hom(\mu_p, A[p])$
where $\mu_p$ is the kernel of Frobenius on $\GG_m$.
Then $p^f$ is the cardinality of $A[p](k)$.
The {\it $a$-number} of $A$ is $a=\dime_k \Hom(\alpha_p, A[p])$ 
where $\alpha_p$ is the kernel of Frobenius on $\GG_a$.
It is well-known that $0 \leq f \leq g$ and $1 \leq a +f \leq g$.
Then $A$ is {\it superspecial} if $a=g$.
The $p$-rank of $\GG=A[p]$ is the dimension of $V^g D({\mathbb G})$.
The $a$-number of $A[p]$ equals $g-{\rm dim}(V^2D({\mathbb G}))$ \cite[5.2.8]{LO}.

%%%%%%%%%%%%%%%%%%%%%%%%%%%%%%%%%%%%%%%%%%%%
\subsubsection{The Ekedahl-Oort type} \label{Seotype}
%%%%%%%%%%%%%%%%%%%%%%%%%%%%%%%%%%%%%%%%%%%%

As in \cite[Sections 5 \& 9]{O:strat}, the isomorphism type of a ${\rm BT}_1$ group scheme $\GG$ over $k$ can be encapsulated into combinatorial data.
If $\GG$ is symmetric with rank $p^{2g}$, then there is a {\it final filtration} $N_1 \subset N_2 \subset \cdots \subset N_{2g}$ 
of $D(\GG)$ as a $k$-vector space which is stable under the action of $V$ and $F^{-1}$ such that $i={\rm dim}(N_i)$, \cite[5.4]{O:strat}.
If $w$ is a word in $V$ and $F^{-1}$, then $wD(\GG)$ is an object in the filtration; in particular, $N_g = V D(\GG) =F^{-1}(0)$.

The {\it Ekedahl-Oort type} of $\GG$, also called the {\it final type},
is $\nu=[\nu_1, \ldots, \nu_g]$ where ${\nu_i}={\rm dim}(V(N_i))$.
The $p$-rank is ${\rm max}\{i \mid \nu_i=i\}$ and the $a$-number equals $g-\nu_g$.
The Ekedahl-Oort type of $\GG$ does not depend on the choice of a final filtration.
There is a restriction $\nu_i \leq \nu_{i+1} \leq \nu_i +1$ on the final type.
There are $2^g$ Ekedahl-Oort types of length $g$ since all sequences satisfying this restriction occur.   
By \cite[9.4, 12.3]{O:strat}, there are bijections between (i) Ekedahl-Oort types of length $g$; (ii) polarized ${\rm BT}_1$ group schemes over $k$ of rank $p^{2g}$;
and (iii) principal quasi-polarized Dieudonn\'e modules of dimension $2g$ over $k$.

In the terminology of \cite[Section 2.2]{EVdG}, an integer $1 \leq i \leq g$ is a {\it break point} of $\nu$ if either 
$\nu_{i-1}=\nu_i \not = \nu_{i+1}$ or $\nu_{i-1} \not =\nu_i = \nu_{i+1}$.  The Ekedahl-Oort type is determined by its break points, since 
these are the indices at which the behavior of the sequence $\nu_i$ switches between the states of being constant and increasing.
The break points are the last indices of the {\it canonical fragments} of $\nu$.	

%%%%%%%%%%%%%%%%%%%%%%%%%%%%%%%%%%%%%%%%%%%%
\subsubsection{The de Rham cohomology} \label{Sdefderham}
%%%%%%%%%%%%%%%%%%%%%%%%%%%%%%%%%%%%%%%%%%%%

By \cite[Section 5]{Oda}, there is an isomorphism of $\EE$-modules between the Dieudonn\'e module of the $p$-torsion group scheme ${\rm Jac}(X_q)[p]$ 
and the de Rham cohomology group $\Hdr$.  

Applying \cite[Section 5]{Oda}, there is the following description of $\Hdr$. 
Recall that $\dim_k \Hdr = 2g$.
Consider the open cover $\cU$ of $X_q$ given by $U_1=X_q \setminus \{P_\infty\}$ 
and $U_2 = X_q \setminus \{(0,y) \mid y^q+y=0\}$.
For a sheaf $\cF$ on $X_q$, let
\begin{eqnarray*}
\CC^0(\cU, \cF) &:=& \{\kappa = (\kappa_1, \kappa_2) \mid \kappa_i\in \Gamma(U_i, \cF)\},\\
\CC^1(\cU, \cF) &:=& \{\phi \in \Gamma(U_1 \cap U_2, \cF)\}.
%\CC^2(\cU, \cF) &:=& \{\psi = (\psi_{ijk})_{i<j<k} \mid \psi_{ijk}\in \Gamma(U_{ijk}, \cF)}.
\end{eqnarray*}
The coboundary operator $\delta: \CC^0(\cU, \cF) \to \CC^1(\cU, \cF)$ is defined by
$\delta \kappa = \kappa_i - \kappa_j$.

The closed de Rham cocycles are defined by
\[
\ZdR(\cU) := \{(\phi, \omega)\in \CC^1(\cU, \OO) \times \CC^0(\cU, \Omega^1) \mid d\phi = \delta\omega\},
\]
that is, $d\phi = \omega_1 - \omega_2$.  
The de Rham coboundaries are defined by
\begin{equation*}
\BdR(\cU) := \{(\delta \kappa, d\kappa)\in \ZdR(\cU) \mid \kappa\in C^0(\cU, \OO)\}.
\end{equation*}
Finally,
\[
\Hdr \cong \Hdr(\cU) := \ZdR(\cU) / \BdR(\cU).
\]

%%%%%%%%%%%%%%%%%%%%%%%%%%%%%%%%%%%%%%%%%%%%

There is an injective homomorphism $\lambda:\Hw \to \Hdr$
denoted informally by $\omega \mapsto (0, \omega)$ where the second coordinate 
is defined by $\omega_i=\omega \vert_{U_i}$.
This map is well-defined since $d(0)= \omega \vert_{U_1} - \omega \vert_{U_2}= \delta \omega$.
It is injective because, if $(0, \omega) \equiv (0, \omega') \bmod{\BdR(\cU)}$,
then $\omega - \omega' = d\kappa$ where $\kappa \in C^0(\cU, \OO)$ is such that 
$\delta \kappa=0$;
thus $\kappa$ is a constant function on $X$ and so $\omega - \omega' = 0$.

There is another homomorphism $\gamma: \Hdr \to \Hone$ sending
the cohomology class of $(\phi,\omega)$ to the cohomology class of $\phi$. 
The choice of cocycle $(\phi, \omega)$ does not matter, since 
the coboundary conditions on $\Hdr$ and $\Hone$ are compatible.
The homomorphisms $\lambda$ and $\gamma$ fit into a short exact sequence
\begin{equation*}
0 \to \Hw \xrightarrow{\lambda} \Hdr
\xrightarrow{\gamma} \Hone \to 0.
\end{equation*}
In Subsection \ref{Sbasis}, 
we construct a suitable section $\sigma: \Hone \to \Hdr$ of $\gamma$ as $k$-vector spaces.

%%%%%%%%%%%%%%%%%%%%%%%%%%%%%%%%%%%%%%%%%%%%%%
\subsubsection{The action of Frobenius and Verschiebung on $\Hdr$}
\label{sub:fv}
%%%%%%%%%%%%%%%%%%%%%%%%%%%%%%%%%%%%%%%%%%%%%%

The Frobenius and Verschiebung operators $F$ and $V$ act on $\Hdr$ as follows:
\[
F(f, \omega) := (f^p, 0)
\quad\text{and}\quad
V(f, \omega) := (0, \car(\omega)),
\]
where $\car$ is the Cartier operator \cite{Cartier} on the sheaf $\Omega^1$.
The operator $F$ is $p$-linear and $V$ is $p^{-1}$-linear.
In particular, $\ker(F) = \Hw={\rm im}(V)$.

The three principal properties of the Cartier operator are that
it annihilates exact differentials, preserves logarithmic ones, and is $p^{-1}$-linear.
The Cartier operator can be computed as follows.
The element $x\in k(X_q)$ forms a $p$-basis of $k(X_q)$ over $k(X_q)^p$,
i.e., every $z\in k(X_q)$ can be written as
$z := z_0^p + z_1^p x + \cdots + z_{p-1}^p x^{p-1}$
for uniquely determined $z_0, \ldots, z_{p-1} \in k(X_q)$.
Then $\car(z \dx/x) := z_{0} \dx/x$.

\subsection{Examples and conceptual overview} \label{Sintuition}

We illustrate the structure of the $p$-torsion group schemes of the Jacobians of the Hermitian curves $X_{p^n}$ for $n \leq 3$ 
as a way of motivating later computations.  The case $n=4$ can be found in Example \ref{Ecasen=4}.

The $p$-rank of $X_q$ is zero since $X_q$ is supersingular.
Let $r_{n,i}$ denote the rank of the $i$th iterate of the Cartier operator $\car$ on $\Hw$. 
The $a$-number of $X_q$ is $a_n=g-r_{n,1}$.
In Proposition \ref{PrankC}, we prove that \[r_{n,i}=p^n(p+1)^i(p^{n-i}-1)/2^{i+1}.\]
 
\subsubsection{The case $n=1$} \label{Ecasen=1}

When $n=1$, then the rank of $\car$ is $r_{1,1}=0$ and so the $a$-number is $a_1=g$.  By definition, 
$X_1$ is superspecial.  The Ekedahl-Oort type of ${\rm Jac}(X_{p})[p]$ is $[0, \ldots, 0]$ and $\DD(X_p)= (\EE/\EE(F+V))^{g}$ as in \eqref{En=1}.

\subsubsection{The case $n=2$}

When $n=2$, then $r_{2,1}=g/2$ and $r_{2,2}=0$.  The Ekedahl-Oort type $\nu=[\nu_1, \ldots, \nu_g]$ has values $\nu_{g}=g/2$ and $\nu_{g/2}=0$.  
By the numerical restrictions on $\nu$ found in Section \ref{Seotype}, this implies that $\nu_i=0$ and $\nu_{g/2+i}=i$ for $1 \leq i \leq g/2$, so that $\nu=[0, \ldots, 0, 1, 2, \ldots, g/2]$.

Using \cite[9.1]{O:strat}, the Dieudonn\'e module is generated by variables $Z_i$ for $1 \leq i \leq 2g$ which are defined in terms of variables $Y_i$ and $X_i$ for $1 \leq i \leq g$.
Imprecisely speaking, the variables $Y_i$ are used (in reverse order) for the indices where the value in the Ekedahl-Oort type stays constant, and the variables $X_i$ are used 
for the indices where the value in the Ekedahl-Oort type is increasing.
In the case $n=2$, this yields:

\[\begin{array}{|c||c|c|c|c|}
\hline
i & 1 \leq i \leq g/2 & 1+ g/2 \leq i \leq g & g+1 \leq i \leq 3g/2 & 1+3g/2 \leq i \leq 2g \\
\hline
Z_i & Y_{g+1-i} & X_{i-g/2} & Y_{1-i+3g/2} & X_{i-g}\\
\hline
\end{array}
\] 

For $1 \leq i \leq g$, the actions of Frobenius and Verschiebung are defined by the rules:
\[F(X_i)=Z_i, \ F(Y_i)=0, \ V(Z_i)=0, \ V(Z_{2g+1-i})= \pm Y_i.\]

With respect to the ordered variables $Z_1, \ldots, Z_{2g}$, the action of $F$ and $V$ are given by the following (each entry represents a square matrix of size $g/2$):

\[
F=\begin{pmatrix}
0 & I & 0 & 0 \\
0 & 0 & 0 & I \\
0 & 0 & 0 & 0 \\
0 & 0 & 0 & 0 \\
\end{pmatrix}, \ 
V=\begin{pmatrix}
0 & 0 & I & 0 \\
0 & 0 & 0 & 0 \\
0 & 0 & 0 & -I \\
0 & 0 & 0 & 0 \\
\end{pmatrix}.
\]

Thus $\DD(X_{p^2})$ is generated by $Z_i$ with relation $(F^2+V^2)Z_i=0$ 
for $1+3g/2 \leq i \leq 2g$, proving $\DD(X_{p^2})= (\EE/\EE(F^2+V^2))^{g/2}$ as in \eqref{En=2}.

\subsubsection{The case $n=3$} \label{Sintuition3}

For $n =3$ (or larger), the information gleaned from ranks of iterates of the Cartier operator is not enough
to determine the structure of the $p$-torsion group scheme.
When $n=3$, $\nu_g=r_{3,1}$, $\nu_{r_{3,1}}=r_{3,2}$ and $\nu_{r_{3,2}}=0$.  
Since $r_{3,1}=2r_{3,2}$, the values $\nu_i$ remain $0$ for $1 \leq i \leq r_{3,2}$ 
and then increase by one at each index for $r_{3,2} < i \leq r_{3,1}$.
Among the indices $r_{3,1} < i \leq g$, it is clear that the values $\nu_i$ must rise 
by a combined total of $r_{3,2}$.  In other words, the value $\nu_i$ must 
increase at somewhat more than half of the indices $i$ in this range, but it is not clear at which ones. 
    
More information is required to determine the values $\nu_i$ for $r_{3,1} < i < g$, 
specifically, the full structure of $\Hdr$ as an $\EE$-module.
We compute the actions of $F$ and $V$ on a basis for $\Hdr$ in Section \ref{SHone}.  The results are numerically intricate and it is not initially clear how to find 
a filtration $N_1 \subset N_2 \subset \cdots \subset N_{2g}$ of $\Hdr$
which is stable under the action of $V$ and $F^{-1}$.

%The end result is 
%\[\nu_{i}=
%\begin{array}{cc}
%0 & 1 \leq i \leq r_{3,2}\\
%i-r_{3,2} & r_{3,2} < i \leq r_{3,1}\\
%r_{3,2} & r_{3,1} < i \leq g-r_{3,2}\\
%i + r_{3,1} -g & g-r_{3,2} < i \leq g\\
%\end{array}.\]

At this stage, computer calculations for small $p$ convinced us that the values $\nu_i$ stay as small 
as possible in the range $r_{3,1} < i \leq g$; in other words, 
that $\nu_i=r_{3,2}$ for $r_{3,1} < i \leq g-r_{3,2}$ 
and then $\nu_i$ increases by one at each index in the range $g-r_{3,2} < i \leq g$.
We came to expect that the Ekedahl-Oort type has the break points $r_{3,2}$, $r_{3,1}$, and $g-r_{3,2}$ when $n=3$
and considered the implications of this hypothesis.

This hypothesis implies that the interval $1 \leq i \leq 2g$ is divided into $8$ canonical fragments, 
six of size $r_{3,2}$ and two of size $g-3r_{3,2}$, 
%note that $g-3r_{3,2} < r_{3,2}
for which the sequence $\nu_i$ switches between the states of being constant and increasing. 
Labeling these as $B_1, \ldots, B_8$, the technique of \cite[9.1]{O:strat} implies that, for $1 \leq i \leq 8$, 
\[F(B_i)=B_{i/2} {\rm \ if \ }i {\rm \ even \ and \ } F(B_i)=0 {\rm \ if \ }i {\rm \ odd};\]
and, for $1 \leq i \leq 4$, 
\[V(B_i)=0 {\rm \ and \ } V(B_{4+i})=\pm B_{2i-1}.\] 
This implies that $\DD(X_{p^3})$ is generated by the $r_{3,2}$ variables in $B_8$ and the $g-3r_{3,2}=(\frac{p(p-1)}{2})^3$ variables in $B_6$, 
subject to the relations that $F^3+V^3=0$ on $B_8$ and $F+V=0$ on $B_6$.
On each block $B_i$, exactly one of $F^{-1}$ and $V$ is defined, 
and the action on the blocks is the same as $\Ttwo$ on $\ZZ/9-\{0\}$.

To prove this, we find a decomposition of $\Hdr$ into blocks $B_i$, which is compatible with the condition that the final filtration must be a refinement of the filtration:
\[0=T_0 \subset T_1 \subset T_2 \subset \cdots \subset T_8,\]
where
\[\begin{array}{|c||c|c|c|c|c|c|c|c|}
\hline
i & 1 & 2 & 3 & 4 & 5 & 6 & 7 & 8 \\
\hline
T_i/T_{i-1} & B_1 & B_5 & B_3 & B_7 & B_2 & B_6 & B_4 & B_8\\
\hline
\end{array}.\]
For example, this shows $\Hw={\rm Span}(B_1, B_3, B_5, B_7)$ 
and $\Hone={\rm Span}(B_2, B_4, B_6, B_8)$.

We assign basis vectors of $\Hw$ and $\Hone$ to blocks based on the following rules, see Sections \ref{Sbvd} and \ref{Scongdec}.
Given $i, j \geq 0$ such that $i+j \leq p^3-2$, 
consider the $p$-adic expansions $i=i_0 + i_1 p + i_2 p^2$ and $j=j_0 + j_1p +j_2 p^2$.
Define $b_0, b_1 \in \ZZ/2$ by $b_0=0$ iff $i_0 + j_0 < p-1$ and $b_1 = 0$ iff $i_0+i_1p+j_0+j_1p < p^2-1$.
To a basis vector $\omega_{i,j} = x^iy^j dx$ of $\Hw$, we assign the vector $(b_0,b_1,1) \in (\ZZ/2)^3$.
To a basis vector $f_{i,j}=\frac{1}{x^iy^j} \frac{y^{q-1}}{x}$ of $\Hone$, 
we assign the vector $(b_0,b_1,0) \in (\ZZ/2)^3$.
We then assign the vectors to blocks by:

\[\Hone \ 
\begin{array}{|c||c|c|c|c|}
\hline
{\rm vector} & (0,0,0) & (0,1,0) & (1,0,0) & (1,1,0) \\
\hline
{\rm block} & B_8 & B_6 & B_4 & B_2\\
\hline
\end{array},\]
and 
\[
\Hw \ 
\begin{array}{|c||c|c|c|c|}
\hline
{\rm vector} & (0,0,1) & (0,1,1) & (1,0,1) & (1,1,1) \\
\hline
{\rm block} & B_1 & B_3 & B_5 & B_7\\
\hline
\end{array}.\]

We conclude (and prove in Theorem \ref{Torbit}) that the Dieudonn\'e module of ${\rm Jac}(X_{p^3})[p]$ is:
\begin{equation} \label{En=3}
\DD(X_{p^3}) = (\EE/\EE(F^3+V^3))^{r_{3,2}} \oplus (\EE/\EE(F+V))^{g-3r_{3,2}}.
\end{equation}

\subsubsection{The case $n=4$}  See Example \ref{Ecasen=4} for the structure of the Dieudonn\'e module when $n=4$. 

\subsubsection{Strategy for general $n$}

For larger values of $n$ we follow a similar strategy.  
We find a basis of $\Hdr$ using a basis of regular $1$-forms $\omega_{i,j} = x^iy^j \dx$ for $\Hw$ and a basis of functions $f_{i,j}=\frac{1}{x^iy^j} \frac{y^{q-1}}{x}$ for $\Hone$.
We compute the image of $F$ and $V$ on $\Hdr$ and form blocks spanned by basis vectors which have the 
same behavior under iterates of $F$ and $V$.
On each block, either $F$ acts bijectively and $V$ as the zero operator, or vice-versa.
The structure of the Dieudonn\'e module of $X_q$ is determined by the (generalized) permutation of the blocks under $F$ and $V$.

To provide some intuition for the main result, Theorem \ref{Torbit}, 
we discuss in non-precise terms how this structure is related to multiplication-by-$2$ on $\ZZ/(2^n+1)$.  
As in the $n=3$ case, the behavior of $F$ and $V$ is determined by the $p$-adic expansions of $i$ and $j$, specifically whether or not 
the base-$p$ sum of $i$ and $j$ `carries' in the $k$-th digit for $0 \leq k < n$. 
This allows us to index the blocks by binary vectors in $(\ZZ/2)^n$.  Since
Frobenius acts by multiplication-by-$p$ on exponents, it acts
like a `shift' on the base-$p$ digits of $i$ and $j$, and thus by a `shift' on the binary vectors.

We re-index the blocks by non-zero elements of $\ZZ/(2^n+1)$.
Exactly one of $F^{-1}$ and $V$ acts bijectively on each block; it acts like multiplication-by-$2$ on the index.  
In the rest of the paper, we make this description precise, 
thus giving an explicit one-to-one correspondence between the distinct indecomposable factors of the Dieudonn\'e module of $X_q$ 
and the orbits of $\Ttwo$ on $\ZZ/(2^n+1) -\{0\}$.  

\subsection{Some $p$-adic formulae} \label{Spadic}

Given a positive integer $m<p^n$, we fix some notation.  
For $0 \leq h \leq n-1$, let $m_h \in \{0, 1, \ldots, p-1\}$ be the $h$th coefficient in the $p$-adic expansion of $m$:
\[m=m_0 + m_1p + \cdots + m_{n-1} p^{n-1}.\]
For $1 \leq h \leq n$, let
\[
m_{h}^+:=\sum_{l=0}^{h-1} m_lp^{l} \ {\rm and} \ m_{h}^T:=\sum_{l=1}^{h-1} m_lp^{l-1}.
\]
Note that $m=m_0+pm_n^T$ and $m=m_{n-1}p^{n-1}+m_{n-1}^+$ with $0 \leq m_n^T, m_{n-1}^+ \leq p^{n-1}-1$.  
Also
\begin{equation}\label{Ecomparison}
m_h^+ = m_0+pm_{h}^T.
\end{equation}

The following lemma will be useful in the proof of Proposition \ref{Pbinary}.

\begin{lemma} \label{Lpadic}
Suppose $1 \leq i,j \leq p^n$.
\begin{enumerate}
\item If $i_{h}^T+j_{h}^T < p^{h} - 1$ then $i_{h+1}^{+}+ j_{h+1}^{+} < p^{h+1} -1$ and the converse is true if $i_0+j_0 \geq p-1$. 
%If $b_0(i,j)=1$, then
%$b_h(i,j) =  0 \Leftrightarrow  i_{h+1}^{+}+ j_{h+1}^{+} < p^{h+1} -1 \Leftrightarrow i_{h}^T+j_{h}^T < p^{h} - 1$. 

\item 
If $i_{h+1}^{+} +j_{h+1}^{+} < p^{h+1}-1$ then $(p^{h}-1-i_{h}^T) + (p^{h}-1-j_{h}^T)  \geq p^{h} -1$ and the converse is true if $i_0+j_0 < p-1$.
%If $b_0(i,j)=0$, then $b_h(i,j)=0 \Leftrightarrow
%i_{h+1}^{+} +j_{h+1}^{+} < p^{h+1}-1 \Leftrightarrow
%(p^{h}-1-i_{h}^T) + (p^{h}-1-j_{h}^T)  \geq p^{h} -1$.

\item Also: $i_{h}^{+} +j_{h}^{+} < p^h-1$ if and only if
$p-1 + j_{n-1} +p(i_{h}^{+}+j_{h}^{+}) < p^{h+1}-1$.

\item Also: $i_{h}^{+} +j_{h}^{+} < p^h-1$ if and only if
$2p^{h+1}-2-(i_{h}^{+} +j_{h}^{+})p-p-j_{n-1} \geq p^{h+1}-1$.
\end{enumerate}
\end{lemma}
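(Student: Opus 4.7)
The plan is to prove all four parts by direct algebraic manipulation, using the identity $m_h^+ = m_0 + p\,m_h^T$ from the preceding paragraph and keeping careful track of strict versus non-strict inequalities together with integrality of the quantities involved.

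For parts~(1) and~(2), the key identity is
\[
i_{h+1}^+ + j_{h+1}^+ \;=\; (i_0 + j_0) + p\bigl(i_{h+1}^T + j_{h+1}^T\bigr),
\]
which translates any inequality on $i_{h+1}^+ + j_{h+1}^+$ into a rescaled inequality on the corresponding $T$-quantities, with slack governed by the size of $i_0 + j_0$ relative to $p$. For the forward direction of part~(1), I would combine the hypothesis $i_h^T + j_h^T \leq p^h - 2$ with the trivial bound $i_0 + j_0 \leq 2p - 2$ to obtain
\[
i_{h+1}^+ + j_{h+1}^+ \;\leq\; (2p-2) + p(p^h - 2) \;=\; p^{h+1} - 2 \;<\; p^{h+1} - 1.
\]
For the converse, rearranging and using the extra hypothesis $i_0 + j_0 \geq p - 1$ yields $p(i_h^T + j_h^T) \leq p^{h+1} - p - 1$; dividing by $p$ and invoking integrality gives the desired bound $i_h^T + j_h^T \leq p^h - 2$.

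Part~(2) is the mirror image of part~(1). I would first rewrite the conclusion $(p^h - 1 - i_h^T) + (p^h - 1 - j_h^T) \geq p^h - 1$ as the equivalent inequality $i_h^T + j_h^T \leq p^h - 1$, then run the analogous argument with a non-strict inequality in place of a strict one; the slack needed for the converse is now supplied by the complementary hypothesis $i_0 + j_0 \leq p - 2$.

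Parts~(3) and~(4) are purely algebraic. For~(3), setting $S := i_h^+ + j_h^+$, the inequality $p - 1 + j_{n-1} + pS < p^{h+1} - 1$ rearranges to $pS \leq p^{h+1} - p - 1 - j_{n-1}$; since $0 \leq j_{n-1} \leq p - 1$, dividing by $p$ and using integrality of $S$ yields $S \leq p^h - 2$, i.e.\ $S < p^h - 1$, and the converse reverses these steps. Part~(4) follows by rearranging the displayed inequality into $pS + p + j_{n-1} \leq p^{h+1} - 1$ and applying the same integrality argument. The main bookkeeping challenge throughout is confirming that the hypotheses on $i_0 + j_0$ provide exactly the right amount of slack to make the converse directions go through; beyond this, the arguments are elementary.
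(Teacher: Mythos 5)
Your proof is correct and follows essentially the same route as the paper: translate each inequality through the relation $m_{h+1}^{+}=m_0+p\,m_h^T$ (resp.\ isolate $p(i_h^{+}+j_h^{+})$ in parts (3) and (4)), then use the stated bounds on $i_0+j_0$ or $j_{n-1}$ together with integrality to supply exactly the slack needed in each direction. The only blemish is that your displayed key identity carries $i_{h+1}^T+j_{h+1}^T$ while you then substitute the hypothesis on $i_h^T+j_h^T$; this off-by-one is inherited from an inconsistency in the paper's own notation (its equation $m_h^{+}=m_0+p\,m_h^T$ versus the usage $m_{h+1}^{+}=m_0+p\,m_h^T$ in its proof), and under the only sensible reading of the lemma your argument is the intended one.
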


\begin{proof}
\begin{enumerate}
\item The condition $i_{h+1}^{+}+ j_{h+1}^{+} < p^{h+1} -1$ is equivalent to the condition
$(i_{h}^T+j_{h}^T)p < p^{h+1} - (i_0+j_0+1)$.
The result follows since $i_0+j_0+1 \leq 2p-1$ and, under the given condition, $i_0+j_0+1 \geq p$.
%\item Suppose $b_k(i,j) =0$, i.e. $i_{k+1}^{+} +j_{k+1}^{+} < p^{k+1}-1$. Then \begin{align*}
%i_0+j_0+(i_{k}^T+j_{k}^T)p &< p^{k+1}-1 \\
%(i_{k}^T+j_{k}^T)p &< p^{k+1}-1 - (i_0+j_0) \\
%(i_{k}^T+j_{k}^T)p &< p^{k+1}-1 - (p-1) \\
%i_{k}^T+j_{k}^T &< p^{k} - 1. 
%\end{align*}

%\item Suppose $b_k(i,j) =1$, i.e. $i_{k+1}^{+} +j_{k+1}^{+} \geq p^{k+1}-1$. Then
%\begin{align*}
% i_0+j_0+(i_{k}^T+j_{k}^T)p &\geq p^{k+1}-1 \\
%(i_{k}^T+j_{k}^T)p &\geq p^{k+1}-1 - (i_0+j_0) \\
%(i_{k}^T+j_{k}^T)p &\geq p^{k+1}-1 - (2p-2) \\
%i_{k}^T+j_{k}^T &\geq p^{k} - 2 +1/p \\
%i_{k}^T+j_{k}^T &\geq p^{k} - 1 
%\end{align*}
%as $i_{k}^T+j_{k}^T$ is an integer. 

\item 
The condition $i_{h+1}^{+} +j_{h+1}^{+} < p^{h+1}-1$ is equivalent to the condition
$(i_{h}^T+j_{h}^T)p < p^{h+1} - (i_0+j_0+1)/p$.
Using the bounds $1 \leq i_0+j_0+1$ and, under the given condition, $i_0+j_0 +1 < p$, 
this condition is equivalent to $i_{h}^T + j_{h}^T \leq p^h-1$, which is equivalent to the 
condition $(p^{h}-1-i_{h}^T) + (p^{h}-1-j_{h}^T)  \geq p^{h} -1$.

%\item Suppose $b_{k}(i,j) =0$, i.e. $i_{k+1}^{+} +j_{k+1}^{+} < p^{k+1}-1$. Then
%\begin{align*}
%i_0+j_0+(i_{k}^T+j_{k}^T)p &< p^{k+1}-1\\
%(i_{k}^T+j_{k}^T)p &< p^{k+1}-1 - (i_0+j_0) \\
%(i_{k}^T+ j_{k}^T)p &< p^{k+1}-1   \\
%-i_{k}^T - j_{k}^T &> -p^{k} + 1/p\\ 
%(p^{k}-1-i_{k}^T) + (p^{k}-1-j_{k}^T)  &> p^{k} -2 + 1/p\\ 
%(p^{k}-1-i_{k}^T) + (p^{k}-1-j_{k}^T)  &\geq p^{k} -1
%\end{align*}

%\item Suppose $b_k(i,j) =1$, i.e. $i_{k+1}^{+} +j_{k+1}^{+} \geq p^{k+1}-1$. Then
%\begin{align*}
%i_0+j_0+(i_{k}^T+j_{k}^T)p &\geq p^{k+1}-1\\
%(i_{k}^T+j_{k}^T)p &\geq p^{k+1}-1 - (i_0+j_0) \\
%(i_{k}^T+j_{k}^T)p &\geq p^{k+1}-1 - (p-2) \\
%-(i_{k}^T+j_{k}^T) &\leq -p^{k} + 1 -1/p \\
%(p^{k}-1-i_{k}^T) + (p^{k}-1-j_{k}^T)  &\leq p^{k} -1 - 1/p\\ 
%(p^{k}-1-i_{k}^T) + (p^{k}-1-j_{k}^T)  &< p^{k} -1
%\end{align*}

\item This follows from the facts that $p(i_{h}^{+} +j_{h}^{+}) \leq p^{h+1}-2p$ when $i_{h}^{+} +j_{h}^{+} < p^h -1$ 
and $p(i_{h}^{+} +j_{h}^{+}) \geq p^{h+1}-p$ when $i_{h}^{+} +j_{h}^{+} \geq p^h -1$ 
and $0 \leq j_{n-1} \leq p-1$.

%\item Suppose $i_{k}^{+} +j_{k}^{+} < p^k-1$. Then
%\begin{align*}
%p(i_{k}^{+}+j_{k}^{+}) &\leq p^{k+1}-2p, \\
%p-1 + p-1 +p(i_{k}^{+}+j_{k}^{+})& < p^{k+1}-1, \\
%p-1 + j_{n-1} +p(i_{k}^{+}+j_{k}^{+})& < p^{k+1}-1. 
%\end{align*}

%\item Suppose $i_{k}^{+} +j_{k}^{+} \geq p^k-1$. Then
%\begin{align*}
%p(i_{k}^{+} +j_{k}^{+} ) &\geq p^{k+1}-p \\
%p-1 +p(i_{k}^{+} +j_{k}^{+} )&\geq  p^{k+1}-1 \\
%p-1 + j_{n-1} +p(i_{k}^{+} +j_{k}^{+} )&\geq p^{k+1}-1 
%\end{align*}

\item Similar to part (3).

%\item Suppose $b_{k-1}(f_{i,j})=0$, i.e. $i_{k}^{+}+j_{k}^{+} < p^k-1$. Then
%\begin{align*}
%-(i_{k}^{+}+j_{k}^{+})p \geq -p^{k+1}+2p\\
%2p^{k+1}-1-(i_{k}^{+}+j_{k}^{+})p \geq p^{k+1}+2p-1\\
%2p^{k+1}-2-(i_{k}^{+}+j_{k}^{+})p -p -(p-1) \geq p^{k+1}-1\\
%2p^{k+1}-2-(i_{k}^{+}+j_{k}^{+})p -p -j_{n-1} \geq p^{k+1}-1
%\end{align*}
%and so $b_k(F(f_{i,j})) =1$.
%\item Suppose $b_{k-1}(f_{i,j})=1$, i.e. $i_{k}^{+}+j_{k}^{+} \geq p^k-1$. Then
%\begin{align*}
%-(i_{k}^{+}+j_{k}^{+})p \leq -p^{k+1}+p\\
%2p^{k+1}-1-(i_{k}^{+}+j_{k}^{+})p  \leq p^{k+1}+p-1\\
%2p^{k+1}-1-(i_{k}^{+}+j_{k}^{+})p - p -j_{n-1} \leq p^{k+1}-1\\
%2p^{k+1}-2-(i_{k}^{+}+j_{k}^{+})p - p -j_{n-1} < p^{k+1}-1
%\end{align*}
%and so $b_k(F(f_{i,j})) =1$.  Also note that $b_0(F(f_{i,j}))= 0 $ as $p-2-j_{n-1} < p-1$.

\end{enumerate}

\end{proof}

\section{The de Rham cohomology of Hermitian curves} \label{SFV}

In this section, we compute the actions of $F$ and $V$ with respect to a chosen basis for $\Hdr$.
An essential point is that these actions are scaled permutation matrices with respect to this basis, see Corollary \ref{Cpermutation}.  

\subsection{A basis for the de Rham cohomology} \label{Sbasis}

Consider the following set of lattice points of the plane: 

\[\Delta := \{(i,j) \mid i, j \in \ZZ, \ i,j \geq 0, \ i+j \leq q-2\}.\]

On the Hermitian curve $X_q:y^q+y=x^{q+1}$, the functions $x$ and $y$ have poles at $P_\infty$, 
with $v_{P_\infty}(x)=-q$ and $v_{P_\infty}(y) =-(q+1)$.
Note that $(i,j) \in \Delta$ if and only if $i,j \geq 0$ and $iq+j(q+1) \leq 2g-2$.

\begin{lemma}
A basis for $\Hw$ is given by the set
\[\BB_0 := \{\omega_{i,j}:=x^iy^j \dx \mid (i,j) \in \Delta\}.\]
\end{lemma}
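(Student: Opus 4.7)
The plan is to verify three facts: $\lvert \BB_0 \rvert = g$, the elements of $\BB_0$ are linearly independent in $\Omega^1(X_q)$, and each $\omega_{i,j}$ is holomorphic on $X_q$. Since $\dim_k \Hw = g = q(q-1)/2$, these three statements together show $\BB_0$ is a basis of $\Hw$.

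The counting is immediate: for each $i$ with $0 \leq i \leq q-2$ the number of admissible $j$ is $q-1-i$, so $\lvert \Delta \rvert = \sum_{i=0}^{q-2} (q-1-i) = q(q-1)/2 = g$. For linear independence, I would use that $y$ satisfies the monic polynomial $Y^q + Y - x^{q+1}$ of degree $q$ over $k(x)$, so the monomials $\{ x^i y^j : i \geq 0,\ 0 \leq j \leq q-1 \}$ are linearly independent in the affine coordinate ring $\Gamma(U_1, \OO)$; since $dx$ is a nonzero global differential, multiplying by $dx$ preserves independence.

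The substantive step is holomorphicity. On $U_1$ the functions $x, y$ and the differential $dx$ are regular, so $\omega_{i,j}$ is regular on $U_1$ and the only question is its order at $P_\infty$. Given $v_{P_\infty}(x) = -q$ and $v_{P_\infty}(y) = -(q+1)$, the element $t := x/y$ is a uniformizer at $P_\infty$. Differentiating $y^q + y = x^{q+1}$ in characteristic $p$ (where $qy^{q-1} = 0$ and $q+1 \equiv 1$) gives the clean relation $dy = x^q\, dx$. Writing $y = t^{-(q+1)} u$ for a unit $u$ at $P_\infty$ and expanding gives $dy = -t^{-(q+2)} u\, dt + t^{-(q+1)}\, du$, whose leading term is nonzero, so $v_{P_\infty}(dy) = -(q+2)$. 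Combining with $dy = x^q\, dx$ yields $v_{P_\infty}(dx) = q^2 - q - 2 = 2g - 2$. Therefore
\[
v_{P_\infty}(\omega_{i,j}) = -iq - j(q+1) + (2g-2),
\]
and the holomorphy condition $v_{P_\infty}(\omega_{i,j}) \geq 0$ becomes $iq + j(q+1) \leq q^2 - q - 2$, which is implied by the defining constraint $i + j \leq q-2$ of $\Delta$ (indeed, $iq + j(q+1) = (i+j)q + j \leq (q-2)q + (q-2) = q^2 - q - 2$).

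The only real obstacle is the valuation calculation $v_{P_\infty}(dx) = 2g-2$, which requires care in characteristic $p$: the coefficient $q+1$ in the differentiated defining equation must be reduced modulo $p$, producing the simplified identity $dy = x^q\, dx$, and one must then identify which term dominates in the $t$-expansion of $dy$. Once this valuation is in hand, the remaining verifications are essentially bookkeeping.
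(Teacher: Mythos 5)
Your proof is correct. The paper itself gives no argument here --- it simply cites \cite[Lemma 1]{sull75}, a general result on bases of holomorphic differentials for curves of this Artin--Schreier type --- so your self-contained verification is a genuine addition rather than a retracing of the paper's steps. The three ingredients you use (the count $\lvert\Delta\rvert=g$, linear independence from the freeness of $k[x,y]/(y^q+y-x^{q+1})$ over $k[x]$ with basis $1,y,\dots,y^{q-1}$, and the valuation computation at $P_\infty$) are exactly what the cited lemma packages, and your key step $v_{P_\infty}(dx)=2g-2$ is handled correctly: the reduction $dy=x^q\,dx$ in characteristic $p$ and the identification of the dominant term in the $t$-expansion of $dy$ are both right, and they are consistent with the paper's stated valuations $v_{P_\infty}(x)=-q$, $v_{P_\infty}(y)=-(q+1)$ and its remark that $(i,j)\in\Delta$ iff $iq+j(q+1)\le 2g-2$. (For the basis claim you only need the implication $i+j\le q-2\Rightarrow iq+j(q+1)\le 2g-2$, which you prove; the converse, which the paper also asserts in passing, is not needed for the dimension count.) The one point worth stating explicitly, which you use implicitly, is that $dx\ne 0$, i.e.\ that $k(X_q)/k(x)$ is separable; this is immediate since the $y$-derivative of $y^q+y-x^{q+1}$ is $1$.
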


\begin{proof}
This is a special case of \cite[Lemma 1]{sull75}.
%For $i \geq 0$ and $0 \leq j \leq q-1$, the $1$-form $\omega_{i,j}:=x^iy^j \dx$ has a pole only at $P_\infty$ and 
%\[v_\infty(\omega_{i,j})=-j(q+1)-iq+2g-2.\]
%If also $0 \leq j \leq q-2$, then $v_\infty(\omega_{i,j}) \geq 0$ and so $\omega_{i,j}$ is a regular $1$-form.
%These $1$-forms are linearly independent in $\Hw$ since their poles orders at $P_\infty$ are different.
%They form a basis for $\Hw$ because there are $g$ pairs $(i,j)$ satisfying these conditions.
\end{proof}

\begin{lemma}
A basis for $\Hone$ is given by the set 
\[\BB_1:=\left\{f_{i,j}:=\frac{1}{x^iy^j} \frac{y^{q-1}}{x} \mid (i,j) \in \Delta\right\}.\]
\end{lemma}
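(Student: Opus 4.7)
The plan is to exhibit $\BB_1$ as a basis dual to $\BB_0$ under the Serre duality pairing between $\Hw$ and $\Hone$.  In \v{C}ech terms, this pairing sends $(\omega, \phi)$, where $\phi \in \Gamma(U_1 \cap U_2, \OO)$ represents a class, to $-\operatorname{Res}_{P_\infty}(\phi\omega)$; it is independent of the representative because, by the residue theorem applied to the meromorphic differential $\phi\omega$, modifying $\phi$ by a section over $U_1$ only alters residues at $X_q\setminus U_1 = \{P_\infty\}$, while modifying by a section over $U_2$ only alters residues at $X_q\setminus U_2 = \{x=0\}$, and these two sums are negatives of each other.  Since the pairing is perfect and $|\Delta| = g = \dim \Hone$, it is enough to show that $\operatorname{Res}_{P_\infty}(f_{k,l}\,\omega_{i,j})$ is nonzero precisely when $(i,j) = (k,l)$.

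First I verify that each $f_{i,j} = x^{-(i+1)} y^{q-1-j}$ lies in $\Gamma(U_1 \cap U_2, \OO)$: the exponent $q-1-j$ is nonnegative since $j \leq q-2$, and the only possible poles of $f_{i,j}$ occur at $P_\infty$ and at the points with $x=0$, all of which are removed from $U_1 \cap U_2$.

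Next I compute residues at $P_\infty$.  From $v_{P_\infty}(x) = -q$ and $v_{P_\infty}(y) = -(q+1)$, the function $t := x/y$ is a local parameter at $P_\infty$.  Setting $s := 1/y$, the curve equation becomes $s + s^q = t^{q+1}$, so $ds = t^q\,dt$ in characteristic $p$ (as $q\equiv 0$, $q+1\equiv 1$), and a brief manipulation yields $dx = -s^{q-2}\,dt$.  Expanding $f_{k,l}\,\omega_{i,j} = x^{i-k-1} y^{q-1+j-l}\,dx$ in terms of $s$ and $t$ gives $v_{P_\infty}(f_{k,l}\,\omega_{i,j}) = (k-i)q + (l-j)(q+1) - 1$, so the residue vanishes unless $(k-i)q = (j-l)(q+1)$.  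Since $\gcd(q,q+1) = 1$ and $|k-i|, |j-l| \leq q-2 < \min(q, q+1)$, this forces $(i,j) = (k,l)$, in which case $f_{i,j}\,\omega_{i,j}$ reduces to $-t^{-1}\,dt$, with residue $-1 \neq 0$.

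Hence the pairing matrix of $\BB_0$ against $\BB_1$ is $\pm I$, so $\BB_1$ is linearly independent in $\Hone$; as $|\BB_1| = g = \dim \Hone$, it is a basis.  The main technical step is the residue computation: once the local parameter $t = x/y$ is identified, the coprimality of $q$ and $q+1$ together with the bounds built into $\Delta$ is exactly what forces the off-diagonal residues to vanish.
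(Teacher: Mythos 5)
Your approach is genuinely different from the paper's: the paper argues directly that each $f_{i,j}$ gives a nonzero class and that the classes are independent because their pole orders $iq+j(q+1)-(q^2+q-1)$ at $P_\infty$ are pairwise distinct, whereas you pair $\BB_1$ against $\BB_0$ by residues. The duality strategy is viable, and most of your setup is correct: the well-definedness of the pairing on cohomology classes, the local parameter $t=x/y$, the identities $ds=t^q\,dt$ and $dx=-s^{q-2}\,dt$, and the diagonal residue $-1$ all check out. However, there is one genuine gap. From $v_{P_\infty}(f_{k,l}\,\omega_{i,j})=(k-i)q+(l-j)(q+1)-1$ you conclude that ``the residue vanishes unless'' this valuation equals $-1$. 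That inference is false in general: a differential with a pole of order $\geq 2$ can have nonzero residue (e.g.\ $(t^{-2}+t^{-1})\,dt$). The valuation controls only the leading term of the Laurent expansion, while the residue is the coefficient of $t^{-1}\,dt$, so you must control the whole tail of the expansion down to degree $-1$; your differential is not a pure monomial in $t$.

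The gap is fixable. Writing $f_{k,l}\,\omega_{i,j}=-t^{\,i-k-1}s^{(k-i)+(l-j)}\,dt$ and using $s=t^{q+1}-s^q=t^{q+1}\bigl(1+O(t^{q^2-1})\bigr)$, where in fact $s/t^{q+1}$ is a power series in $t^{q^2-1}$, the exponents of $t$ occurring in the expansion all lie in $(k-i)q+(l-j)(q+1)-1+(q^2-1)\ZZ_{\geq 0}$. Hence the residue can be nonzero only if $(k-i)q+(l-j)(q+1)=-N(q^2-1)$ for some integer $N\geq 0$, and the bounds $|k-i|,|l-j|\leq q-2$ rule out $N\geq 1$: for $N=1$ coprimality of $q$ and $q+1$ forces $q+1\mid k-i$, hence $k=i$ and then $l-j=-(q-1)$, which is too large; for $N\geq 2$ the left side has absolute value at most $(q-2)(2q+1)<2(q^2-1)$. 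This leaves $N=0$, which is the case you treated. Alternatively, the automorphisms $(x,y)\mapsto(\zeta x,\zeta^{q+1}y)$ for $\zeta\in\FF_{q^2}^\times$ fix $P_\infty$ and scale $f_{k,l}\,\omega_{i,j}$ by $\zeta^{(i-k)+(q+1)(j-l)}$, so the residue vanishes unless $(i-k)+(q+1)(j-l)\equiv 0 \bmod q^2-1$, which the same bounds reduce to $(i,j)=(k,l)$. With either repair your proof is complete; as written, the off-diagonal vanishing is unjustified.
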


\begin{proof}
To compute $\Hone$, consider the open cover $\cU$ of $X_q$ given by $U_1=X_q \setminus \{P_\infty\}$ 
and $U_2 = X_q \setminus \{(0,y) \mid y^q+y=0\}$.
For $i,j \in \ZZ$, consider the functions $f_{i,j}  \in \Gamma(U_1 \cap U_2, \OO)$.
If $0 \leq j \leq q-1$, the valuation of $f_{i,j}$ at $P_\infty$ is:
\[v_\infty(f_{i,j})=-(q+1)(q-1-j)+q(i+1)=j(q+1)+iq -(q^2+q-1).\]
If also $i+j \leq q-2$, then $v_\infty(f_{i,j})<0$ and so $f_{i,j} \not \in \Gamma(U_2, \OO)$.
If also $i \geq 0$, then $f_{i,j}$ has poles above $x=0$ and so $f_{i,j} \not \in \Gamma(U_1, \OO)$.
Thus (the equivalence class of) the function $f_{i,j}$ is non-zero in $\Hone$ if $i, j \geq 0$ and $i+j \leq q-2$.
These functions $f_{i,j}$ are linearly independent in $\Hone$ since their pole orders at $P_\infty$ are different.
They form a basis for $\Hone$ because there are $g$ pairs $(i,j)$ satisfying these conditions.
\end{proof}

Given $f \in \OO$, it is possible to write $df=\omega(f)_1+\omega(f)_2$ where $\omega(f)_i \in \Gamma(U_i, \Omega^1)$.
Let $\tilde{f}_{i,j} = (f_{i,j}, \omega(f_{i,j})_1, \omega(f_{i,j})_2)$ denote the image of $f_{i,j}$ in $\Hdr$.  

In the rest of this section, we prove that this basis is convenient for computing the actions of $F$ and $V$. 

\begin{corollary} \label{Cpermutation}
With respect to the basis $\BB=\BB_0 \cup \BB_1$, the actions of $V$ and $F$ on $\Hdr$ are scaled permutation matrices, 
i.e., they have at most one non-zero entry in each row and each column. 
\end{corollary}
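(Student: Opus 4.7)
My plan is to verify the scaled-permutation property case by case, splitting into the four combinations of operator ($F$ or $V$) and basis subset ($\BB_0$ or $\BB_1$). The easiest case is $F$ on $\BB_0$: each $\omega_{i,j} \in \BB_0$ sits in $\Hdr$ as $\lambda(\omega_{i,j})=(0,\omega_{i,j})$, and formula \eqref{EFV} immediately gives $F(0,\omega_{i,j})=(0^p,0)=0$. So $F|_{\BB_0}\equiv 0$, which is trivially a scaled permutation. For $F$ on $\BB_1$, formula \eqref{EFV} gives $F(\tilde f_{i,j})=(f_{i,j}^p,0)$, and I need to identify the class of $f_{i,j}^p = y^{p(q-1-j)}/x^{p(i+1)}$ in $\Hone$. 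The plan is to iterate the Artin-Schreier relation $y^q = x^{q+1}-y$ to reduce the numerator; since we are in characteristic $p$, the identity $(x^{q+1}-y)^p = x^{p(q+1)}-y^p$ has no binomial cross terms, so each reduction step is clean. After discarding contributions in $\Gamma(U_1,\OO)$ (terms with no pole at $P_\infty$) and in $\Gamma(U_2,\OO)$ (terms with no poles along $x=0$), only a single monomial of the form $y^{q-1-j'}/x^{i'+1}$ with $(i',j')\in\Delta$ should survive, giving a scalar multiple of $\tilde f_{i',j'}$.

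For $V$ on $\BB_0$: rewrite $\omega_{i,j} = x^{i+1}y^j\,dx/x$ and use that $x$ is a $p$-basis of $k(X_q)$ over $k(X_q)^p$. Expanding $x^{i+1}y^j = z_0^p + z_1^p x + \cdots + z_{p-1}^p x^{p-1}$ gives $\car(\omega_{i,j}) = z_0\,dx/x$. The core computation is the $p$-basis expansion of $y^j$: using the equation $y^q+y=x^{q+1}$ to separate $p$-th powers and iterating if the relevant power of $y$ exceeds $q-1$, only a single monomial in $x$ and $y$ survives in $z_0$, so $\car(\omega_{i,j})$ is a scalar multiple of some $\omega_{i',j'}$. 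For $V$ on $\BB_1$: $V(\tilde f_{i,j}) = (0,\car(\omega(f_{i,j})))$ lies in $\lambda(\Hw)$ because $\car$ kills the exact differential $df_{i,j}$, which forces $\car(\omega(f_{i,j})_1)$ and $\car(\omega(f_{i,j})_2)$ to agree on $U_1 \cap U_2$ and glue to a global regular $1$-form. I would choose a preferred splitting of $df_{i,j}$ into pieces regular on $U_1$ and $U_2$, then compute $\car$ of the $U_1$-piece using the same $p$-basis technique, identifying the result as a scalar multiple of some $\omega_{i',j'}$.

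The main obstacle is the $p$-basis expansion of $y^j$: since $y$ is not a polynomial in $x$ over $k(X_q)^p$, isolating the coefficient of each $x^k$ in $y^j$ requires using the curve equation, possibly iteratively, with careful bookkeeping of which monomials contribute. What makes the corollary nontrivial is the assertion that exactly one monomial $x^{i'}y^{j'}$ survives after all reductions; this should emerge from the $p$-adic digit combinatorics encoded in Lemma \ref{Lpadic} together with the clean behavior of the Artin-Schreier relation under $p$-th powers. Once these explicit formulas for $F$ and $V$ on $\BB$ are established in the subsequent subsections, Corollary \ref{Cpermutation} is an immediate reading of them.
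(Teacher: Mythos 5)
Your overall strategy --- compute $F$ and $V$ explicitly on each element of $\BB$ and read off that every image is a scalar multiple of a single basis vector or zero --- is exactly the paper's route (the corollary is deduced from Lemma \ref{LactionVw} and Propositions \ref{PactionF} and \ref{PactionVf}), and your treatments of $F|_{\BB_0}$, $V|_{\BB_0}$ and $V|_{\BB_1}$ match the paper's computations in outline. However, there is a genuine gap in your plan for $F$ on $\BB_1$. You assert that after discarding the pieces regular on $U_1$ and on $U_2$, ``only a single monomial of the form $y^{q-1-j'}/x^{i'+1}$ with $(i',j')\in\Delta$ survives, giving a scalar multiple of $\tilde f_{i',j'}$.'' That is only Case A of Proposition \ref{PactionF}, which holds when $i_{n-1}^{+}+j_{n-1}^{+}<p^{n-1}-1$. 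For the remaining (roughly half of the) basis vectors of $\BB_1$, the candidate surviving indices satisfy $i'+j'\geq q-1$, so they lie outside $\Delta$ and the class of $f_{i,j}^p$ in $\Hone$ is zero --- but its class in $\Hdr$ is \emph{not}. Because $f_{i,j}^p$ is a $p$-th power, $d(f_{i,j}^p)=0$, so once you write $f_{i,j}^p$ as a difference of functions regular on $U_1$ and on $U_2$ and subtract the corresponding de Rham coboundary, you are left with a nonzero element of $\lambda(\Hw)$, namely a global regular differential built from the exact differentials of the local pieces; the paper identifies it as $d_{i,j}\,\omega_{(q-1)-(pi_{n-1}^{+}+(p-1)-i_{n-1}),\,q-1-(pj_{n-1}^{+}+j_{n-1}+i_{n-1}+1)}$.

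This is not a cosmetic omission: working only with the class in $\Hone$ (as you propose) computes $\gamma\circ F$, not $F$ on $\Hdr$, and would report $F(\tilde f_{i,j})=0$ for all of Case B. That would both fail to prove the corollary for those columns and destroy the block structure the rest of the paper depends on (the blocks $B_t\subset\Hw$ with $t$ odd and $t>1$ are precisely images of $F$ arising from Case B). A second, smaller inaccuracy: the reduction of $f_{i,j}^p$ is not free of binomial cross terms. The clean identity $(x^{q+1}-y)^p=x^{p(q+1)}-y^p$ is not what is needed; the actual computation expands $y^{q(p-1-j_{n-1})}=(x^{q+1}-y)^{p-1-j_{n-1}}$ with a genuine binomial sum $\sum_l c_l M_l$, and one must sort the terms $M_l$ by which open set they are regular on before the single surviving term $\sigma^*$ emerges. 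Your plan is repairable --- you must add the case analysis on $i_{n-1}^{+}+j_{n-1}^{+}$ versus $p^{n-1}-1$ and, in the second case, track the differential part of the coboundary you subtract --- but as written it does not establish the action of $F$ on all of $\BB_1$.
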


\begin{proof}
This follows from Lemma \ref{LactionVw}, Proposition \ref{PactionF} and Proposition \ref{PactionVf}.
\end{proof}

\subsection{The action of $V$ on $\Hw$} \label{SVHw}

\begin{lemma} \label{LactionVw}
For $(i,j) \in \Delta$, write $i:=i_0+pi_{n}^T$ and $j:=j_0+pj_{n}^T$ with $0 \leq i_0, j_0 \leq p-1$ and $0 \leq i_n^T, j_n^T \leq p^{n-1}-1$.  
There is a constant $d'_{i,j} \not = 0$ such that the action of $V$ on $\omega_{i,j} \in \Hw$ is given by:
\[
V\left(\omega_{i,j} \right) = 
\begin{cases} 
0 & {\rm if \ } i_0 + j_0 < p -1, \\
d'_{ij}\omega_{p^{n-1}(p-1-i_0) + i_{n}^T, p^{n-1}(i_0+j_0-(p-1)) + j_{n}^T } & {\rm if \ } i_0 + j_0 \geq p -1. 
\end{cases}
\]
\end{lemma}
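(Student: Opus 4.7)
The plan is to use that on $\Hw$ the operator $V$ acts as the Cartier operator $\car$, and then to carry out a direct computation of $\car(\omega_{i,j})$ using the $p$-basis $\{1,x,\ldots,x^{p-1}\}$ of $k(X_q)$ over $k(X_q)^p$. Since $\omega_{i,j}=x^{i+1}y^{j}\,\frac{dx}{x}$, the recipe of Section~\ref{sub:fv} says I must write
\[
x^{i+1}y^{j}=\sum_{\ell=0}^{p-1}z_\ell^{\,p}\,x^{\ell}
\]
and then $\car(\omega_{i,j})=z_0\,\frac{dx}{x}$. The key is thus to locate and identify the unique $x^{0}$-contribution in the $p$-basis expansion of $x^{i+1}y^{j}$.

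First, I would separate the obvious $p$-th-power piece of $y^{j}$ by writing $j=j_0+pj_n^T$ and $y^{j}=y^{j_0}\bigl(y^{j_n^T}\bigr)^{p}$. To handle $y^{j_0}$ (which has $0\le j_0\le p-1$), I would invoke the defining equation $y^{q}+y=x^{q+1}$ of the Hermitian curve in the form $y=x^{q+1}-y^{q}$. Since $q=p^n$, the term $y^q=(y^{p^{n-1}})^p$ is itself a $p$-th power, so the binomial theorem gives
\[
y^{j_0}=\sum_{k=0}^{j_0}\binom{j_0}{k}(-1)^{k}\bigl(y^{kp^{n-1}}\bigr)^{p}\,x^{(j_0-k)(q+1)}.
\]
Substituting back, each summand of $x^{i+1}y^{j}$ has the form $\bigl(\text{scalar}\bigr)\cdot\bigl(y^{j_n^T+kp^{n-1}}\bigr)^{p}\cdot x^{e_k}$ with exponent $e_k:=i+1+(j_0-k)(q+1)$, and the scalar is $\binom{j_0}{k}(-1)^k$, which lies in $\FF_p$ (and is therefore its own $p$-th root).

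Next, I would look at $e_k \pmod p$: since $q\equiv 0\pmod p$, one has $e_k\equiv i_0+1+j_0-k\pmod p$. For the summand to contribute to $z_0^{\,p}$, I need $e_k\equiv 0\pmod p$, i.e. $k\equiv i_0+j_0+1\pmod p$ with $0\le k\le j_0$. A quick case analysis shows that no such $k$ exists when $i_0+j_0<p-1$ — giving $V(\omega_{i,j})=0$ — while if $i_0+j_0\ge p-1$ there is a unique admissible solution $k=i_0+j_0+1-p$. A short computation then shows
\[
e_k=p\bigl(1+i_n^T+(p-1-i_0)p^{n-1}\bigr),
\]
so $x^{e_k}=(x^{M})^{p}$ with $M-1=i_n^T+(p-1-i_0)p^{n-1}$, and the $y$-exponent becomes $j_n^T+(i_0+j_0-(p-1))p^{n-1}$. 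Reading off $z_0$ and multiplying by $dx/x$ yields exactly the claimed differential, with coefficient $d'_{i,j}=(-1)^{k}\binom{j_0}{k}$.

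The remaining point is to check that $d'_{i,j}\ne 0$ in $k$; this is immediate from Lucas, since $0\le k\le j_0\le p-1$ forces $\binom{j_0}{k}\not\equiv 0\pmod p$. (As a sanity check, one also verifies the resulting pair lies in $\Delta$, which reduces to the inequality $j_0 p^{n-1}+i_n^T+j_n^T\le q-2$ and follows from $i+j\le q-2$ by a short $p$-adic estimate.) The only mildly delicate step is extracting the \emph{unique} $x$-exponent divisible by $p$ — i.e., the dichotomy in $i_0+j_0$ — and keeping careful track of the $p$-adic bookkeeping across the binomial expansion; the rest is routine.
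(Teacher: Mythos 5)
Your proposal is correct and follows essentially the same route as the paper's proof: both substitute $y=x^{q+1}-y^{q}$ into the non-$p$-power part $y^{j_0}$, expand by the binomial theorem, and observe that exactly one term survives the Cartier operator, namely the one with $x$-exponent divisible by $p$ (equivalently, the paper's condition $i_0+j_0-l=p-1$ matches your $k=i_0+j_0+1-p$), yielding the same target indices and the same constant $d'_{i,j}=(-1)^{i_0+j_0-(p-1)}\binom{j_0}{i_0+j_0-(p-1)}\neq 0$. The only cosmetic difference is that the paper first pulls out $x^{i_n^T}y^{j_n^T}$ by $p^{-1}$-linearity and then applies the rule $\car(x^m\,dx)\neq 0$ iff $m\equiv -1\bmod p$, whereas you work directly with the $p$-basis expansion of $x^{i+1}y^{j}$; these are equivalent computations.
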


\begin{proof}
It suffices to computing the image of the Cartier operator $\car$ on $\omega_{i,j}$: 
\begin{align*}
\car(x^iy^j\dx)&= x^{i_{n}^T}y^{j_{n}^T}\car\left(x^{i_0}(x^{q+1}-y^q)^{j_0} \dx \right)\\
&= x^{i_{n}^T}y^{j_{n}^T}\sum_{l=0}^{j_0}\binom{j_0}{l} (-1)^l \car\left( x^{(q+1)(j_0-l)} y^{ql} x^{i_0}\dx \right)\\
&= x^{i_{n}^T}y^{j_{n}^T}\sum_{l=0}^{j_0}\binom{j_0}{l} (-1)^l x^{p^{n-1}(j_0-l)}y^{p^{n-1}l}\car\left( x^{i_0+j_0-l}\dx \right).\\
\end{align*}
Now $\car(x^k \dx) \not = 0$ if and only if $k \equiv -1 \bmod p$.  The exponent of $x$ satisfies
\[
0 \leq i_0+j_0-l \leq 2p-2.
\]
The value congruent to $-1 \mod p$ in this interval is $i_0+j_0-l = p-1$.  Thus $V(\omega_{i,j})=0$ unless $i_0+j_0 \geq p-1$.  If this is the case then substituting $l=i_0+j_0- (p-1)$ gives the desired result
where 
\[
d'_{ij}=\binom{j_0}{i_0+j_0-(p-1)}(-1)^{i_0+j_0-(p-1)}.
\]
\end{proof}

Let $r_{n,i}$ denote the rank of the $i$th iterate of the Cartier operator on $\Hw$ and let $a_n$ be the $a$-number of ${\rm Jac}(X_q)$.
The value of $a_n$ was previously computed in \cite[Proposition 14.10]{Gross}.

\begin{proposition}\label{PrankC}
\begin{enumerate}
\item The rank $r_{n,i}$ of $\car^i$ on $\Hw$ is
\[r_{n,i}=p^n(p+1)^i(p^{n-i}-1)/2^{i+1}.\]
\item The $a$-number $a_n$ of ${\rm Jac}(X_q)$ is
\[a_n=p^n(p^{n-1}+1)(p-1)/4.\]
\end{enumerate}
\end{proposition}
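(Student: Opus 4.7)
The plan is to exploit the explicit formula for $V$ on $\omega_{i,j}$ in Lemma \ref{LactionVw}. Since Corollary \ref{Cpermutation} tells us that $V$ acts by a scaled permutation on the basis $\BB_0$ of $\Hw$, and the formula in Lemma \ref{LactionVw} is visibly injective on its support (the digit sequence of the image determines that of the preimage), the rank of $\car^s$ on $\Hw$ equals the number of pairs $(i,j)\in\Delta$ for which $\car^s(\omega_{i,j})\neq 0$.

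The first step is to iterate Lemma \ref{LactionVw}. A single application of $V$ cyclically shifts the $p$-adic digit sequences of $i$ and $j$ downward, replacing the top digits by $p-1-i_0$ and $i_0+j_0-(p-1)$ respectively. Iterating, $\car^s(\omega_{i,j})\neq 0$ if and only if the bottom digits of each successive pair sum to at least $p-1$, which translates back to the condition $i_l+j_l\geq p-1$ for every $0\leq l\leq s-1$.

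The second step is a combinatorial count. Writing $i=i_{\rm low}+p^s i_{\rm high}$ and $j=j_{\rm low}+p^s j_{\rm high}$, each of the $s$ low digit positions independently contributes $p+(p-1)+\cdots+1=p(p+1)/2$ pairs $(i_l,j_l)$ with $i_l+j_l\geq p-1$. The crucial decoupling step is to show that, for any such low part, the constraint $i+j\leq p^n-2$ is equivalent to $i_{\rm high}+j_{\rm high}\leq p^{n-s}-2$; this follows from the bounds $p^s-1\leq i_{\rm low}+j_{\rm low}\leq 2(p^s-1)$, which force the borderline case $i_{\rm high}+j_{\rm high}=p^{n-s}-1$ to push $i+j$ above $p^n-2$. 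Once decoupled, the count factors as
\[
\left(\frac{p(p+1)}{2}\right)^s \cdot \binom{p^{n-s}}{2} = \frac{p^n(p+1)^s(p^{n-s}-1)}{2^{s+1}},
\]
which is $r_{n,s}$.

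Part (2) is immediate: the $a$-number equals $\dim(\ker F\cap\ker V)$ on $\Hdr$, which coincides with $\dim\ker(\car|_{\Hw})=g-r_{n,1}$. Substituting $g=p^n(p^n-1)/2$ and $r_{n,1}=p^n(p+1)(p^{n-1}-1)/4$ and simplifying yields $a_n=p^n(p-1)(p^{n-1}+1)/4$. The only genuinely delicate point in the argument is the carry analysis underlying the decoupling of low- and high-digit constraints; everything else is routine bookkeeping.
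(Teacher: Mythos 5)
Your proof is correct and follows essentially the same route as the paper: it uses Lemma~\ref{LactionVw} to show that $\car^s(\omega_{i,j})\neq 0$ exactly when $i_l+j_l\geq p-1$ for $0\leq l\leq s-1$, and then counts the surviving basis vectors, with the $a$-number obtained as $g-r_{n,1}$. In fact you supply more detail than the paper, whose proof states the digit criterion and leaves the count --- including the decoupling of the constraint $i+j\leq q-2$ into the high-digit condition $i_{\rm high}+j_{\rm high}\leq p^{n-s}-2$ --- implicit.
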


\begin{proof}
Note that $\omega_{i,j} \in {\rm Ker}(\car)$ iff $i_0 + j_0 < p-1$.
More generally, $\omega_{i,j} \in {\rm Ker}(\car^r) - {\rm Ker}(\car^{r-1})$ if and only if:
\[i_0 + j_0 \geq p-1, \ i_1 + j_1 \geq p-1, \ldots, i_{r-2}+j_{r-2} \geq p-1, \ i_{r-1}+j_{r-1} < p-1.\]
This proves the first item.  The second item follows since $a_n=g-r_{n,1}$.
\end{proof}

\subsection{The action of $F$ and $V$ on an image of $\Hone$ in $\Hdr$} \label{SHone}

\subsubsection{The Action of Frobenius} \label{SactionF}

\begin{proposition} \label{PactionF}
For $(i,j) \in \Delta$,  write $i=i_{n-1} p^{n-1} + i_{n-1}^{+}$ and $j= j_{n-1} p^{n-1} + j_{n-1}^{+}$ with $0 \leq i_{n-1}, j_{n-1} \leq p-1$ and $0 \leq  i_{n-1}^{+},  j_{n-1}^{+} \leq p^{n-1}-1$.
Say Case A means that $i_{n-1}^{+} + j_{n-1}^{+} < p^{n-1} -1$ and Case B means that $i_{n-1}^{+} + j_{n-1}^{+} \geq p^{n-1} -1$.
There are constants $c_{i,j}, d_{i,j} \not = 0$ such that the action of $F$ on $\tilde{f}_{i,j} \in \Hdr$ is given by:
\[
F\left(\tilde{f}_{i,j} \right) = 
\begin{cases} 
c_{ij}f_{pi_{n-1}^{+}+(p-1)-i_{n-1},pj_{n-1}^{+}+j_{n-1}+i_{n-1}} & {\rm Case}\  A\\
d_{ij}\omega_{(q-1) - (pi_{n-1}^{+}+(p-1)-i_{n-1}), q-1 - (pj_{n-1}^{+}+j_{n-1}+i_{n-1} +1)} & {\rm Case}\ B.
\end{cases}
\]
\end{proposition}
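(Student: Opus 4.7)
The plan is to compute $F(\tilde{f}_{i,j}) = (f_{i,j}^p, 0)$ explicitly and reduce it modulo the coboundaries $\BdR(\cU)$. Writing $f_{i,j}^p = y^{p(q-1-j)}/x^{p(i+1)}$ and decomposing $p(q-1-j) = \alpha q + \beta$ with $\alpha = p-1-j_{n-1}$ and $\beta = q - p(1+j_{n-1}^+)$, the curve relation $y^q = x^{q+1} - y$ gives the binomial expansion
\[
f_{i,j}^p = \sum_{m=0}^{\alpha} T_m, \qquad T_m := (-1)^{\alpha-m}\binom{\alpha}{m}\, x^{A_m} y^{B_m},
\]
with $A_m = (q+1)m - p(i+1)$ and $B_m = q-1-(j_{n-1}+pj_{n-1}^+)-m$. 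Direct substitution shows that at $m = i_{n-1}$ the exponents collapse to $A_m = -(i'+1)$ and $B_m = q-1-j'$, so $T_{i_{n-1}} = c_{ij}\, f_{i',j'}$ with $c_{ij} = (-1)^{p-1-j_{n-1}-i_{n-1}}\binom{p-1-j_{n-1}}{i_{n-1}}$.

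Next I would classify the remaining terms. For $m > i_{n-1}$, one verifies $A_m \geq 1$ and $B_m \geq 0$ (using $p(i+1) = i_{n-1}(q+1) + [p(i_{n-1}^++1) - i_{n-1}]$ with the bracket at most $q$), so $T_m$ is a polynomial, hence in $\Gamma(U_1, \OO)$. For $m < i_{n-1}$, the shift $v_{P_\infty}(T_m) = v_{P_\infty}(T_{i_{n-1}}) + (i_{n-1}-m)(q^2-1)$ together with an explicit check at $m = i_{n-1}-1$ gives $v_{P_\infty}(T_m) \geq 0$, placing $T_m$ in $\Gamma(U_2, \OO)$. These are all coboundaries. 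The case split reflects whether $(i', j') \in \Delta$: in Case A ($i_{n-1}^+ + j_{n-1}^+ < p^{n-1}-1$) one checks $i' + j' \leq q-2$, so $f_{i', j'}$ is a basis vector and $F(\tilde{f}_{i,j}) = c_{ij}\, \tilde{f}_{i',j'}$ in $\Hdr$. In Case B, the inequality $i' + j' \geq q - 1$ forces $v_{P_\infty}(f_{i',j'}) \geq 0$, putting $T_{i_{n-1}}$ itself in $\Gamma(U_2, \OO)$; writing $f_{i,j}^p = \kappa_1 - \kappa_2$ with $\kappa_1 := \sum_{m > i_{n-1}} T_m \in \Gamma(U_1, \OO)$ and $\kappa_2 := -\sum_{m \leq i_{n-1}} T_m \in \Gamma(U_2, \OO)$, the class $(f_{i,j}^p, 0)$ becomes cohomologous to $(0, -d\kappa_1)$, an element of $\lambda(\Hw)$.

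To identify $-d\kappa_1$ as a scalar multiple of a basis element of $\Hw$ in Case B, I would use $dy = x^q\, dx$, obtained by differentiating $y^q + y = x^{q+1}$ in characteristic $p$. Then
\[
dT_m = (-1)^{\alpha-m}\binom{\alpha}{m}\bigl(A_m\, x^{A_m-1} y^{B_m} + B_m\, x^{A_m+q} y^{B_m-1}\bigr)\, dx,
\]
and the second monomial of $dT_m$ has the same $x,y$-exponents as the first of $dT_{m+1}$. The identity $d(f_{i,j}^p) = 0$ (automatic for any $p$-th power) combined with the divisibilities $A_0 = -p(i+1) \equiv 0 \pmod p$ and $B_\alpha = \beta \equiv 0 \pmod p$ force the pairwise cancellation $c_{m+1} A_{m+1} + c_m B_m \equiv 0 \pmod p$ for each $0 \leq m \leq \alpha - 1$. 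This forces a telescoping collapse: $d\kappa_1 = \sum_{m > i_{n-1}} dT_m$ reduces to the single surviving monomial $c_{i_{n-1}+1} A_{i_{n-1}+1}\, x^{q-1-i'} y^{q-2-j'}\, dx$, giving $-d\kappa_1 = d_{ij}\, \omega_{q-1-i',\, q-2-j'}$ with $A_{i_{n-1}+1} = q - i' \equiv 1 + i_{n-1} \pmod p$. The Case B inequality combined with $i + j \leq q-2$ forces $i_{n-1} + j_{n-1} \leq p-2$, so both $1 + i_{n-1}$ and $\binom{p-1-j_{n-1}}{i_{n-1}+1}$ are nonzero mod $p$, ensuring $d_{ij} \neq 0$.

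The main obstacle is the bookkeeping for the telescoping and verifying the pole-order inequalities uniformly in $m$. The divisibilities of $A_0$ and $B_\alpha$ by $p$ (which kill the "endpoint" contributions that would otherwise obstruct the collapse) are the decisive inputs that make the identification of $-d\kappa_1$ with a single basis vector work cleanly.
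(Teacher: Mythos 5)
Your proposal is correct and follows the same overall strategy as the paper's proof: expand $f_{i,j}^p$ via the curve relation $y^q=x^{q+1}-y$ into monomials $T_m$, isolate the distinguished term at $m=i_{n-1}$, and absorb the remaining terms into a coboundary by sorting them into $\Gamma(U_1,\OO)$ (the polynomial terms, $m>i_{n-1}$) and $\Gamma(U_2,\OO)$ (the terms with poles only above $x=0$, $m<i_{n-1}$); your index $m$ is the paper's $l$ up to the substitution $m=p-1-j_{n-1}-l$, and your $c_{ij}$ agrees with the paper's $c_{l^*}$. Where you genuinely diverge is in identifying the differential component: the paper writes down explicit candidates $\omega(\sigma^*)_1,\omega(\sigma^*)_2$ and verifies $d\sigma^*=\omega(\sigma^*)_1+\omega(\sigma^*)_2$ by hand, whereas you derive $d\kappa_1$ from the telescoping forced by $d(f_{i,j}^p)=0$ together with the $p$-divisibility of $A_0$ and $B_\alpha$. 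This is cleaner and less error-prone (the paper's displayed $\omega(\sigma^*)_1$ in fact contains typos in its exponents), and it delivers the coefficient $d_{ij}=-c_{i_{n-1}+1}(1+i_{n-1})$, which one checks equals the paper's $-(j_{n-1}+i_{n-1}+1)c_{i,j}$ via the cancellation identity $c_{m+1}A_{m+1}+c_mB_m\equiv 0\bmod p$ at $m=i_{n-1}$.

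One point needs to be made explicit in Case A. A class in $\Hdr$ is not determined by its function component: two cocycles with the same $\phi$ differ by $\lambda(\eta)$ for a global holomorphic $\eta$, so concluding ``$F(\tilde f_{i,j})=c_{ij}\tilde f_{i',j'}$'' from the identity of function parts alone is not enough --- and this matters, since Corollary \ref{Cpermutation} (the scaled-permutation property) would fail if a stray $\lambda(\eta)$ term survived. You must also check that the pair $(-d\kappa_1,-d\kappa_2)$ agrees with $c_{ij}$ times the differential components of the chosen lift $\tilde f_{i',j'}$. Fortunately your own telescoping closes this: in Case A the same collapse gives $-d\kappa_1=-c_{i_{n-1}+1}(q-i')\,x^{q-1-i'}y^{q-2-j'}\,dx$, which is exactly $c_{ij}$ times the $\Gamma(U_1,\Omega^1)$-component $-(j'+1)y^{q-2-j'}x^{q-1-i'}\,dx$ of $df_{i',j'}$, because $c_{i_{n-1}+1}(1+i_{n-1})\equiv c_{ij}(j'+1)\bmod p$ by the same cancellation identity. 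So the gap is expository rather than substantive, but it should be stated.
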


\begin{proof}
First, 
\begin{align*}
F\left( f_{i,j} \right) &= \frac{1}{y^{j_{n-1}p^n+j_{n-1}^{+}p}x^{i_{n-1}p^n+i_{n-1}^{+}p}} \frac{y^{qp-p}}{x^p} \\
&= \frac{1}{y^{(j_{n-1}^{+}+1)p}x^{(i_{n-1}^{+}+1)p}} \frac{y^{q-1}}{x} \left( y^{q(p-1-j_{n-1})}yx^{-i_{n-1}q+1} \right).
\end{align*}
Let $c_l = (-1)^l \binom{p-1-j_{n-1}}{l}$, then
\[y^{q(p-1-j_{n-1})}yx^{-i_{n-1}q+1}  = \sum_{l=0}^{p-1-j_{n-1}} c_l x^{(q+1)(p-1-j_{n-1}-l)}y^{l+1} x^{-i_{n-1}q+1}.\]
The sum is a linear combination $\sum c_l M_l$ for $0 \leq l \leq p-1-j_{n-1}$ where
%&= \sum_{l=0}^{p-1-j_{n-1}} (-1)^l \binom{p-1-j_{n-1}}{l}M_{i,j}.\\
\[M_l=x^{q(p-1-j_{n-1}-i_{n-1}-l)}y^{l+1} x^{p-j_{n-1}-l} {\rm \ and \ } c_l = (-1)^l \binom{p-1-j_{n-1}}{l}.\]
For $l \in I_1 = \{0, \ldots, p-2-j_{n-1}-i_{n-1}\}$, the only pole of $M_l$ is at $P_\infty$; then $\sigma_1:=\sum_{l \in I_1} c_l M_1 \in \Gamma(U_1,\OO)$.
For $l \in I_2 = \{p-j_{n-1}-i_{n-1}, \ldots, p-1-j_{n-1}\}$, the only poles of $M_l$ are above $0$; then $\sigma_2:=\sum_{l \in I_2} c_l M_1 \in \Gamma(U_2,\OO)$.

Fix $l^*=p-1-j_{n-1}-i_{n-1}$ and consider the non-zero constants $c_{i,j} :=c_{l^*}$ and $d_{i,j}:=-(j_{n-1}+i_{n-1}+1)c_{l^*}$.
Let
\[\sigma^*:= \frac{1}{y^{(j_{n-1}^{+}+1)p}x^{(i_{n-1}^{+}+1)p}} \frac{y^{q-1}}{x} M_{l^*}=\frac{c_{i,j}}{ y^{pj_{n-1}^{+}+j_{n-1}+i_{n-1}} x^{pi_{n-1}^{+}+p-1-i_{n-1}} } \frac{y^{q-1}}{x}.\]
Consider \[\omega(\sigma^*)_1:=c_{i,j} i_{n-1}^{+} y^{q-1-j_{n-2}^{+}p-j_{n-1}-i_{n-1}} x^{-pi_{n-1}^{+}-p-3+i_{n-1}}\dx,\]
and 
\[\omega(\sigma^*)_2:=d_{i,j} y^{q-1-j_{n-1}^{+}p-j_{n-1}-i_{n-1}-1} x^{q-1 -pi_{n-1}^{+}-p-1+i_{n-1}} \dx.\]
One can check that $\omega(\sigma^*)_i \in \Gamma(U_i, \Omega^1)$ and that $d(\sigma^*)=\omega(\sigma^*)_1 + \omega(\sigma^*)_2$.
Thus $F(\tilde{f}_{i,j}) \equiv (\sigma^*, \omega(\sigma^*)_1, \omega(\sigma^*)_2)$ in $\Hdr$.
%Recall that $d(F(f_{i,j}))=0$.  
In Case A, then $(j_{n-1}^{+}p+j_{n-1}+i_{n-1}) +(pi_{n-1}^{+}+p-1-i_{n-1}) < q-1$.
In this case, $d(\sigma_1) =-\omega(\sigma^*)_1$ and  $d(\sigma_2) =-\omega(\sigma^*)_2$.  Taking the quotient by $\sigma_1$  and $\sigma_2$ yields that
\[F\left(\tilde{f}_{i,j} \right)=c_{i,j}f_{pi_{n-1}^{+}+(p-1)-i_{n-1},pj_{n-1}^{+}+j_{n-1}+i_{n-1}}.\]
In Case B, then $\omega(\sigma^*)_1$ is regular.  
In this case, $d(\sigma_2+\sigma^*) = \omega(\sigma^*)_1 = -d(\sigma_2)$.  
Taking the quotient by $\sigma_1$ and  $\sigma^*+\sigma_2$ yields that
\[F\left(\tilde{f}_{i,j} \right)=d_{i,j} \omega_{(q-1) - (pi_{n-1}^{+}+(p-1)-i_{n-1}), q-1 - (pj_{n-1}^{+}+j_{n-1}+i_{n-1} +1)}.\]
\end{proof}

\subsubsection{The Action of Verschiebung} \label{SactionV}

\begin{proposition} \label{PactionVf}
For $(i,j) \in \Delta$, write $i=i_0 + i_{n}^T p$ and $j= j_0 + j_{n}^Tp$ with $0 \leq i_0, j_0 \leq p-1$ and $0 \leq i_n^T, j_n^T \leq p^{n-1}-1$. 
Let $i^*=p^{n-1}i_0 + (p^{n-1} -1 - i_{n}^T)$ and $j^*=p^{n-1}(p-2-i_0-j_0) + (p^{n-1}-1-j_{n}^T)$.
There is a constant $c'_{i,j} \not = 0$ such that the action of $V$ on $\tilde{f}_{i,j} \in \Hdr$ is given by:
\[
V\left(\tilde{f}_{i,j} \right) = 
\begin{cases} 
c'_{i,j}\omega_{i^*, j^*} & {\rm if \ } i_0 + j_0 < p -1 \\
0 & {\rm if \ } i_0 + j_0 \geq p -1.
\end{cases}
\]
\end{proposition}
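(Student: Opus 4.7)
The plan is to compute $V(\tilde{f}_{i,j}) = (0, \car(\omega))$ by applying the Cartier operator to a local representative of the differential part of the cocycle. Because the first coordinate vanishes, the image lies in $\lambda(\Hw)$ and is identified with a global holomorphic $1$-form expressed in the basis $\BB_0$.

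First I would write out $df_{i,j}$ using $dy = x^q\,dx$ (which follows from $y^q+y=x^{q+1}$ and $q=p^n\equiv 0$ in $k$):
\[
df_{i,j} = -(j+1)\,x^{q-i-1}y^{q-2-j}\,dx \;-\; (i+1)\,x^{-(i+2)}y^{q-1-j}\,dx.
\]
The first summand is regular on $U_1$ and the second on $U_2$, so these serve as $\omega(f_{i,j})_1$ and $\omega(f_{i,j})_2$ (up to sign). I would then compute $\car(\omega(f_{i,j})_2)$. When $i_0 = p-1$, the factor $i+1$ vanishes in $k$ and the form is zero, so $V(\tilde{f}_{i,j})=0$; this disposes of one subcase of Case B.

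Assuming $i_0 \leq p-2$, the $p^{-1}$-linearity of $\car$ lets me pull out the $p$-th power $(x^{-i_n^T}y^{p^{n-1}-1-j_n^T})^p$ and reduce to computing $\car\bigl(x^{-(i_0+1)}y^{p-1-j_0}\,dx/x\bigr)$. To decompose $x^{-(i_0+1)}y^{p-1-j_0}$ in the $p$-basis $\{1,x,\ldots,x^{p-1}\}$ of $k(X_q)$ over $k(X_q)^p$, I would use the relation $y = x^{q+1}-(y^{p^{n-1}})^p$ and the binomial theorem to obtain
\[
y^{p-1-j_0} = \sum_{m=0}^{p-1-j_0}\binom{p-1-j_0}{m}(-1)^m \bigl(x^{p^{n-1}(p-1-j_0-m)}y^{p^{n-1}m}\bigr)^p x^{p-1-j_0-m}.
\]
After multiplying by $x^{-(i_0+1)}$, the residue exponents become $e_m := p-2-j_0-m-i_0$.

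Only the coefficient of $x^0$ survives under $\car(\,\cdot\,dx/x)$. In Case A ($i_0+j_0<p-1$), a unique $m^* = p-2-j_0-i_0 \in [0,p-1-j_0]$ yields $e_{m^*}=0$; reassembling with the pulled-out $p$-th power produces $c'_{i,j}\,\omega_{i^*,j^*}$ with $c'_{i,j}$ a nonzero scalar in $k$, and the exponents reduce to $p^{n-1}(i_0+1)-1-i_n^T = i^*$ and $p^{n-1}(p-1-j_0-i_0)-1-j_n^T = j^*$. In Case B with $i_0 \leq p-2$, every $e_m$ is strictly negative; the shift $x^{e_m} = x^{e_m+p}(x^{-1})^p$ absorbs $(x^{-1})^p$ into the $p$-th power and moves the residue exponents uniformly into $[p-1-i_0,\,2p-2-i_0-j_0] \subseteq [1,p-1]$, so no term lands at $x^0$ and the Cartier operator vanishes. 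The main obstacle will be the bookkeeping in Case B—verifying that after the single $p$-shift the residue exponents stay bounded above by $p-1$ amounts to the identity $2p-2-i_0-j_0 \leq p-1 \iff i_0+j_0 \geq p-1$, which makes Case B exactly the set of $(i,j)$ for which no contribution to $z_0$ arises.
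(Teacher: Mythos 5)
Your proposal is correct and follows essentially the same route as the paper's proof: split $df_{i,j}$ into the pieces regular on $U_1$ and $U_2$, pull the $p$-th powers out by $p^{-1}$-linearity, expand $y^{p-1-j_0}=(x^{q+1}-y^q)^{p-1-j_0}$ binomially, and observe that exactly one term survives the Cartier operator precisely when $i_0+j_0<p-1$, yielding $c'_{i,j}\omega_{i^*,j^*}$. Your separate dispatch of the subcase $i_0=p-1$ via the vanishing of $i+1$ in $k$ is the same mechanism the paper invokes when it notes that the exponent $e=-p-1$ term is trivialized by the coefficient $(i_0+1)$.
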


\begin{proof}
Let 
\[
\omega(f_{i,j})_1= - (i+1)y^{q-j-1}x^{-i-2} \dx {\rm \ and \ }  \omega(f_{i,j})_2=-(j+1)y^{q-j-2}x^{-i-1}\dy
\]
One can check that $\omega(f_{i,j})_1 \in \Gamma(U_1, \Omega^1)$ and $\omega(f_{i,j})_2 \in \Gamma(U_2, \Omega^1)$
and that $df_{i,j}=\omega(f_{i,j})_1 + \omega(f_{i,j})_2$.

Recall that $V(f, \omega) := (0, \car(\omega))$.
Since $\car(\omega(f_{i,j})_1)+\car(\omega(f_{i,j})_2)=0$, it is only necessary to compute $\car(- \omega(f_{i,j})_1)$ which equals
\[\car\left((i+1)y^{q-j-1}x^{-i-2}\dx\right) = (i_0+1)y^{q/p - j_{n}^T-1}x^{-i_{n}^T}\car\left(y^{p-j_0-1}x^{-i_0-2}\dx\right).\]
Now, $\car\left(y^{p-j_0-1}x^{-i_0-2}\dx\right) = \car\left(\left(x^{q+1} - y^q\right)^{p-j_0-1}x^{-i_0-2}\dx\right)$
which equals
\[\sum_{l=0}^{p-1-j_0}\binom{p-1-j_0}{l} \car \left( x^{(q+1)(p-1-j_0-l)} (-y)^{ql}x^{-i_0-2}\dx\right).\]
Note that 
\[\car \left( x^{(q+1)(p-1-j_0-l)} (-y)^{ql}x^{-i_0-2}\dx\right)= (-1)^l x^{p^{n-1}(p-1-j_0-l)}y^{p^{n-1}l} \car\left(x^{p-3-j_0-i_0-l}\dx\right).\]
The exponent $e=p-3-j_0-i_0-l$ of $x$ satisfies
\[-p -1 \leq -i_0 -2 = p-3-j_0-i_0-(p-1-j_0) \leq e \leq p-3.\]
Recall that $\car(x^e \dx) \not = 0$ if and only if $e \equiv -1 \bmod p$.  
Note that $e=-p-1$ only when $i_0=p-1$, in which case the term is trivialized by $\car$ as seen above.
As such, the only term which is not trivialized by $\car$ is when $e=-1$, i.e., when
\[
l=p-2-i_0-j_0.
\]
Thus $V(\tilde{f}_{i,j})=0$ if $i_0 + j_0 \geq p-1$.  
If $i_0 + j_0 \leq p-2$, the claimed result follows by substituting $l=p-2-i_0-j_0$
and using the non-zero constant
\[
c'_{i,j}=(i_0+1)\binom{p-1-j_0}{p-2-j_0-i_0}(-1)^{p-2-i_0-j_0}.
\]
\end{proof}

\section{Decomposition of the de Rham cohomology of Hermitian curves} \label{SHdr}

This is the main result of this section:

\begin{corollary} \label{Cmult2}
There is a decomposition $\Hdr = \oplus_{1 \leq t \leq 2^n}{B_t}$ such that the morphisms
$V$ and $F^{-1}$ act on the blocks $B_t$ by multiplication-by-2 on the indices modulo $2^n+1$ as follows.  

If $2^{n-1}+1 \leq t \leq 2^n$, then there is an isomorphism $V:B_t \to B_{2t \bmod 2^n+1}$.
  
If $1 \leq t \leq 2^{n-1}$, then $B_t \subset {\rm ker}(V)={\rm Im}(F)$ and there is an isomorphism $F^{-1}:B_t \to B_{2t}$. 
\end{corollary}

In order to prove this, we partition the basis $\BB=\BB_0 \cup \BB_1$ for $\Hdr$ into $2^n$ sets which are well-suited for studying the action of $F$ and $V$.
The sets are first indexed by vectors $\vec{b} \in (\ZZ/2)^n$ and then by non-zero $t \in \ZZ/(2^n+1)$. 

\subsection{A binary vector decomposition} \label{Sbvd}
Given $i, j \geq 0$ such that $0 \leq i+j \leq q-2$, 
recall the definitions of $i_k^{+}, j_k^{+}, i_{k}^T, j_{k}^T$ from Section \ref{Spadic}.
For $0 \leq h \leq n-2$, let 
\[b_h(i,j) = 
\begin{cases}
0 & {\rm if } \ i_{h+1}^{+}+ j_{h+1}^{+} < p^{h+1} -1,\\
1 & {\rm otherwise}.
\end{cases}
\]
For example, $b_0(i,j)=0$ when $i_0 + j_0 < p-1$ and $b_1(i,j)=0$ when $i_0+i_1p + j_0 +j_1p < p^2-1$.

\begin{definition} \label{Dblock}
For each element of the basis $\BB$ for $\Hdr$, define a vector $\vec{b}=(b_0, \ldots, b_{n-1}) \in  (\ZZ/2)^n$ as follows:
If $\tilde{f}_{i,j} \in \BB \cap \Hone$, let $b_{n-1}(i,j)=0$ and  
\[\vec{b}(\tilde{f}_{i,j})=(b_0(i,j), \ldots, b_{n-2}(i,j), 0).\]
If $\omega_{i,j} \in \BB \cap \Hw$, let $b_{n-1}(i,j)=1$ and
\[\vec{b}(\omega_{i,j})=(b_0(i,j), \ldots, b_{n-2}(i,j), 1).\]
Finally, for $\vec{b} \in (\ZZ/2)^n$, consider the subspace
\[\Hdr_{\vec{b}}:=\Span\{\lambda \in \BB \ \mid \ \vec{\lambda}=\vec{b}\}.\]
For notational purposes, let $\Hdr_{0}=0$.  
%If the context is clear, let $b_h(i,j)$ denote either 
%$b_h(\omega_{i,j})$ or $b_h(f){i,j})$.
\end{definition}

\begin{lemma} \label{rem:countingblocks}
Given a vector $\vec{b}=(b_0, \ldots, b_{n-1}) \in (\ZZ/2)^n$, 
%let $\vec{b}^{\rm aug}= (1,b_0 ,\ldots, b_{n-2},0)$.
%Let
let $n_s$ (resp.\ $n_d$) be the number of adjacent terms of 
%$\vec{b}^{\rm aug}$ 
$(b_0,\ldots,b_{n-2})$
which are equal (resp.\ different).
Then 
\[
\dim(\Hdr_{\vec{b}}) = \left( \frac{p(p+1)}{2} \right)^{n_s+1+b_0-b_{n-2}} \left( \frac{p(p-1)}{2} \right)^{n_d+1+b_{n-2}-b_0}.
\]
\end{lemma}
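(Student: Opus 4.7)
The plan is to count the number of basis vectors in $\BB$ whose binary vector equals $\vec{b}$, since these span $\Hdr_{\vec{b}}$. The first observation is that the coordinate $b_{n-1}$ merely distinguishes the two direct summands: every $(i,j) \in \Delta$ produces exactly one element $\omega_{i,j} \in \BB_0$ (with $b_{n-1}=1$) and exactly one element $\tilde{f}_{i,j}$ built from $f_{i,j} \in \BB_1$ (with $b_{n-1}=0$). Consequently
\[
\dim(\Hdr_{\vec{b}}) \;=\; \#\bigl\{(i,j) \in \Delta \;:\; b_h(i,j) = b_h \text{ for } 0 \le h \le n-2\bigr\},
\]
which is independent of $b_{n-1}$. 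This is consistent with the stated formula since $\vec{b}^{\rm aug}$ does not record $b_{n-1}$.

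I would then set up a digit-by-digit recursion. Define $A_h := p^h - 1 - (i_h^{+} + j_h^{+})$ and $s_h := i_h + j_h$. Using $i_{h+1}^{+} = i_h^{+} + i_h p^h$ one obtains
\[
A_{h+1} \;=\; (p - 1 - s_h)\, p^h + A_h,
\]
and the definition of $b_h$ rewrites as the sign condition $b_h = 0 \Leftrightarrow A_{h+1} \geq 1$. The two extra entries of $\vec{b}^{\rm aug}$ now appear naturally: $A_0 = 0$ forces a ``virtual'' $b_{-1} = 1$, and the condition $(i,j)\in\Delta$ (i.e.\ $i+j \leq p^n - 2$) is equivalent to $A_n \geq 1$, i.e.\ to a ``virtual'' $\tilde{b}_{n-1} = 0$. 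Thus the $n$ transitions to be counted are exactly the $n$ adjacent pairs of $\vec{b}^{\rm aug} = (1, b_0, \ldots, b_{n-2}, 0)$.

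Next I would perform a short case analysis, exploiting the range bounds $A_h \in [1, p^h-1]$ when $b_{h-1} = 0$ and $A_h \in [-(p^h-1), 0]$ when $b_{h-1} = 1$, to show that the transition $b_{h-1} \to b_h$ is determined by $s_h$ alone and is independent of the specific value of $A_h$. Concretely, $b_h = b_{h-1}$ corresponds to $p$ permissible values of $s_h$ (the set $\{0,\ldots,p-1\}$ if $b_{h-1}=0$, and $\{p-1,\ldots,2p-2\}$ if $b_{h-1}=1$), while $b_h \neq b_{h-1}$ corresponds to the complementary $p-1$ values. Counting ordered pairs $(i_h, j_h) \in \{0,\ldots,p-1\}^2$ with prescribed sum $s_h$ then yields
\[
\sum_{s=0}^{p-1}(s+1) \;=\; \frac{p(p+1)}{2}
\quad\text{in each ``same'' case,}\qquad
\sum_{s=1}^{p-1} s \;=\; \frac{p(p-1)}{2}
\quad\text{in each ``different'' case.}
\]

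Finally, because the count of allowable digit pairs at step $h$ depends only on the transition $(b_{h-1}, b_h)$ and not on earlier choices, the total count factors as the product over all $n$ transitions in $\vec{b}^{\rm aug}$, producing precisely $(p(p+1)/2)^{n_s}(p(p-1)/2)^{n_d}$. The main obstacle is really just the boundary bookkeeping --- verifying that the virtual entries $1$ and $0$ in $\vec{b}^{\rm aug}$ correspond exactly to $A_0 = 0$ and to the defining inequality of $\Delta$ --- together with confirming the sign analysis is independent of the prior $A_h$ values; once those are in place, the factorisation and the formula are immediate.
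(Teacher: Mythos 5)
Your proof is correct and is essentially the paper's argument: the paper counts pairs $(i,j)$ digit by digit via the ``carrying pattern'' of the base-$p$ addition $i+j+1$ (carry-in $1$ at digit $0$, carry-out $0$ at digit $n-1$ since $i+j<q-1$), and your quantity $A_h=p^h-1-(i_h^{+}+j_h^{+})$ is exactly the carry condition in disguise ($A_{h+1}\geq 1$ iff no carry out of digit $h$), leading to the same per-digit counts $p(p+1)/2$ and $p(p-1)/2$ and the same product formula. Your write-up is somewhat more explicit about the boundary entries of $\vec{b}^{\rm aug}$ and the independence needed for the factorization, but no new idea is involved.
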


\begin{proof}
The values $b_k(i,j)$ are determined by the behavior of the base-$p$ expansion of the sum $i+j+1$.
Namely, $b_k(i,j)=1$ if and only if the sum $i+j+1$  `carries' in the $k$-th digit.
Since $i+j < q-1$, there is no `carrying' out of the last digit; 
the addition of $1$ can be thought of as `carrying' into the first digit. 
Then $\dime(\Hdr_{\vec{b}})$ is the number of pairs $(i,j)$ satisfying the  `carrying pattern' associated to $\vec{b}$.
It equals the product of the numbers $\alpha_k$ of pairs of $p$-adic digits $(i_k, j_k)$ as $0 \leq k \leq n-1$, where 
 $\alpha_k= \#\{(i_k, j_k) \mid  0 \leq i_k,j_k \leq p-1, \  i_k+j_k \leq p-1-|b_k-b_{k-1}|\}$.
\end{proof}

\subsection{A congruence decomposition} \label{Scongdec}

To index blocks with integers instead of binary vectors, consider this bijection $T:(\ZZ/2)^n \to \ZZ/(2^n+1)-\{0\}$.

\begin{definition}
Given $\vec{b}=(b_0, \ldots, b_{n-1}) \in (\ZZ/2)^n$:
\begin{enumerate}
\item if $b_{n-1}=1$, let $T(\vec{b})=2^{n-1}b_0 + \cdots 2b_{n-2} + 1$;
\item if $b_{n-1}=0$, let $T(\vec{b})=2^n-(2^{n-1}b_0 + \cdots 2b_{n-2})$.
\end{enumerate}
\end{definition}

When $r$ is even (resp.\ odd), the coordinates of the vector $T^{-1}(r)$ are the coefficients of the binary expansion of $r$ (resp.\ 
written in reverse order). 

\subsection{Block structure} \label{Sblockstructure}

Consider the decomposition $\Hdr = \oplus_{1 \leq t \leq 2^n}{B_t}$ where $B_t := \Span\{\lambda \in \BB \ \mid \ T(\vec{\lambda})=t\}$ for $1 \leq t \leq 2^n$.
Corollary \ref{Cmult2} is an immediate consequence of the next result.

\begin{theorem} \label{TblockactionFV}
The actions of $V$ and $F$ on $\Hdr$ satisfy the following:
\begin{enumerate}
\item if $1 \leq t \leq 2^{n-1}$, then $V(B_t) = 0$; 
\item if $2^{n-1} + 1 \leq t \leq 2^n$, then there is an isomorphism $V \mid_{B_t}: B_t \to B_{2t-2^n-1}$;
\item if $t$ is odd, then $F(B_t)=0$;
\item if $t$ is even, then there is an isomorphism $F \mid_{B_t}:B_t \to B_{t/2}$. 
\end{enumerate}
\end{theorem}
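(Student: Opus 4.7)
The plan is to deduce Theorem \ref{TblockactionFV} from Theorem \ref{Pbinary} by transporting the binary-vector actions $\vec{b} \mapsto \vec{Vb}$ and $\vec{b} \mapsto \vec{Fb}$ from Definition \ref{defn:FVbin} through the bijection $T \colon (\ZZ/2)^n \to \ZZ/(2^n+1) - \{0\}$. Since $B_t = \Hdr_{T^{-1}(t)}$ and Theorem \ref{Pbinary} already yields isomorphisms $V\Hdr_{\vec{b}} \cong \Hdr_{\vec{Vb}}$ and $F\Hdr_{\vec{b}} \cong \Hdr_{\vec{Fb}}$, the theorem reduces to a purely combinatorial check of two identities: $T(\vec{Vb}) = 2t - 2^n - 1$ whenever $\vec{Vb} \neq 0$, and $T(\vec{Fb}) = t/2$ whenever $\vec{Fb} \neq 0$, together with the assertion that the vanishing ranges are exactly $1 \leq t \leq 2^{n-1}$ for $V$ and $t$ odd for $F$.

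First I would handle the vanishing cases (parts (1) and (3)). By the definition of $T$, the top bit $b_{n-1}$ of $T^{-1}(t)$ equals $1$ if and only if $t$ is odd, so the condition $b_{n-1} = 1$ in Definition \ref{defn:FVbin}(3) immediately yields part (3). For $V$, the combined trivializing condition of Definition \ref{defn:FVbin}(1)--(2) is $b_0 \neq b_{n-1}$. When $b_{n-1} = 1$ (so $t$ is odd), $b_0 = 0$ is equivalent to $t = 2^{n-2}b_1 + \cdots + 2 b_{n-2} + 1 \leq 2^{n-1} - 1$; when $b_{n-1} = 0$ (so $t$ is even), $b_0 = 1$ forces $2^{n-1}b_0 + \cdots \geq 2^{n-1}$, equivalent to $t = 2^n - (2^{n-1}b_0 + \cdots + 2b_{n-2}) \leq 2^{n-1}$. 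Together these yield part (1).

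For the non-vanishing assertions (parts (2) and (4)), I would split by the parity of $t$ and compute $T$ on the shifted vector. When $t$ is odd and $t > 2^{n-1}$, we have $b_{n-1} = b_0 = 1$, so $\vec{Vb} = (b_1, \ldots, b_{n-2}, 0, 1)$ has last coordinate $1$, and unwinding $T$ gives $T(\vec{Vb}) = 2^{n-1} b_1 + \cdots + 4 b_{n-2} + 1 = 2(t - 2^{n-1} - 1) + 1 = 2t - 2^n - 1$. When $t$ is even and $t > 2^{n-1}$, one uses the "flip all interior positions" rule together with the identity $\sum_{i=2}^{n-1} 2^i = 2^n - 4$ to arrive at the same formula. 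Subcases [A] ($b_{n-2} = 0$, $t \equiv 0 \pmod 4$) and [B] ($b_{n-2} = 1$, $t \equiv 2 \pmod 4$) of Definition \ref{defn:FVbin}(4) are parallel and both yield $T(\vec{Fb}) = t/2$. The only place I expect real bookkeeping care is in the "all interior bits flipped" subcases, where one must correctly pair each complemented digit with its coefficient $2^{n-1-i}$; beyond that, the calculation is mechanical, and the isomorphism statements in parts (2) and (4) follow directly from Theorem \ref{Pbinary}.
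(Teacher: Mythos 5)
Your proposal is correct and follows essentially the same route as the paper: both reduce the theorem to Theorem \ref{Pbinary} plus a combinatorial verification that the bijection $T$ intertwines the vector operations of Definition \ref{defn:FVbin} with $t \mapsto 2t - (2^n+1)$ and $t \mapsto t/2$, with the same case split on $b_0$, $b_{n-1}$, $b_{n-2}$. The identity computations you sketch (e.g.\ $T(\vec{Vb}) = 2(t-2^{n-1}-1)+1$ in the odd case and the use of $\sum_{i=2}^{n-1}2^i = 2^n-4$ in the flipped case) match the paper's calculations.
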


The proof of Theorem \ref{TblockactionFV} occupies the rest of the section.

\subsection{The action of $F$ and $V$ in terms of binary vectors}

In this section, we show that $F$ and $V$ act on $\Hdr$ by permuting the subspaces $\Hdr_{\vec{b}}$ for $\vec{b} \in (\ZZ/2)^n$.
The next definition summarizes the change in the binary vector under the action of $F$ and $V$. 

\begin{definition}\label{defn:FVbin} Let $\iota$ be the transposition $(0,1)$.
Given $\vec{b}=(b_0, \ldots, b_{n-1})$, define $\vec{Vb}$ and $\vec{Fb}$ as follows: 
\begin{enumerate}
\item Action of $V$ on $\Hw$: If $b_{n-1}=1$ and $b_0=0$, let $\vec{Vb}=0$.\\
If $b_{n-1}=1$ and $b_0=1$, let $\vec{Vb}=(b_1, \ldots, b_{n-2}, 0, 1)$, {\it (left shift with flip in last two positions).}

\item Action of $V$ on $\Hone$: If $b_{n-1}=0$ and $b_0=1$, let $\vec{Vb}=0$.\\
If $b_{n-1}=0$ and $b_0=0$, let $\vec{Vb}=(\iota(b_1), \ldots, \iota(b_{n-2}), 1,1)$, {\it (left shift with flip in all positions).}

\item Action of $F$ on $\Hw$: If $b_{n-1}=1$, let $\vec{Fb}=0$.

\item Action of $F$ on $\Hone$: 
\begin{description}
\item{[A]} If $b_{n-1}=0$ and $b_{n-2}=0$, let $\vec{Fb}=(1, b_0, \ldots, b_{n-3}, 0)$, {\it (right shift with flip in first position).}
\item{[B]} If $b_{n-1}=0$ and $b_{n-2}=1$, let $\vec{Fb}=(0, \iota(b_0), \ldots, \iota(b_{n-3}), 1)$, {\it (right shift with flip in all interior positions).}
\end{description}
\end{enumerate}
\end{definition}

\begin{proposition} \label{Pbinary}
For each binary vector $\vec{b} \in (\ZZ/2)^n$:
\[V\Hdr_{\vec{b}} \cong \Hdr_{\vec{Vb}} \ {\rm and } \ F\Hdr_{\vec{b}} \cong \Hdr_{\vec{Fb}}.\]
\end{proposition}

\begin{proof}
The proof that the image of $F$ or $V$ is in the claimed block is divided into cases as in Definition \ref{defn:FVbin}. 
\begin{enumerate}
\item Action of $V$ on $\Hw$: 
If $\omega_{i,j} \in \Hdr_{\vec{b}}$, the claim is that $V(\omega_{i,j}) \in \Hdr_{\vec{Vb}}$.
Note that $b_{n-1}(\omega_{i,j})=1$ by definition.  
If $b_0(\omega_{i,j})=0$ then $V(\omega_{i,j})=0$ by Lemma \ref{LactionVw}.

Suppose $b_0(\omega_{i,j}) =1$, i.e., $i_0+j_0 \geq p-1$.
By Definition \ref{defn:FVbin}(1), it suffices to show that $b_{k-1}(V(\omega_{i,j})) = b_k(\omega_{i,j})$ for $k \in \{1 \ldots n-1\}$.  
By definition, $b_k(\omega_{i,j}) =0$ if and only if $i_{k+1}^{+}+ j_{k+1}^{+} < p^{k+1} -1$.
By Lemma \ref{Lpadic}(1), since $i_0+j_0 \geq p-1$, this is equivalent to
$i_{k}^T+j_{k}^T < p^{k} - 1$.
By Lemma \ref{LactionVw}, this is equivalent to $b_{k-1}(V(\omega_{i,j})) =0$.
In particular, $b_{n-2}(V(\omega_{i,j})) = 0$ since $i+j<p^n-1$.

\item Action of $V$ on $\Hone$:
If $\tilde{f}_{i,j} \in \Hdr_{\vec{b}}$, the claim is that $V(\tilde{f}_{i,j}) \in \Hdr_{\vec{Vb}}$.
Note that  $b_{n-1}(\tilde{f}_{i,j})=0$ by definition. 
If $b_0(\tilde{f}_{i,j})=1$ then $V(\tilde{f}_{i,j})=0$ by Proposition \ref{PactionVf}.

Suppose $b_0(\tilde{f}_{i,j}) =0$, i.e., $i_0+j_0 < p-1$.
By Definition \ref{defn:FVbin}(2), it suffices to show $b_{k}(\tilde{f}_{i,j}) =0$ if and only if $b_{k-1}(V(\tilde{f}_{i,j})) =1$ for $1 \leq k \leq n-1$.
By definition, $b_{h}(\tilde{f}_{i,j}) =0$ means that $i_{h+1}^{+} +j_{h+1}^{+} < p^{h+1}-1$. 
By Lemma \ref{Lpadic}(2), this is equivalent to
$(p^{k}-1-i_{k}^T) + (p^{k}-1-j_{k}^T) \geq p^{k} -1$.
This is equivalent to $b_{k-1}(V(\tilde{f}_{i,j})) =1$ by Proposition \ref{PactionVf}.  
In particular, $b_{n-2}(V(\tilde{f}_{i,j})) =1$ since $b_{n-1}(\tilde{f}_{i,j}) =0$.

\item Action of $F$ on $\Hw$:  If $\omega_{i,j} \in \Hdr_{\vec{b}}$, then $F(\omega_{i,j})=0$ by Section \ref{sub:fv}

\item Action of $F$ on $\Hone$:

For [A], given $\tilde{f}_{i,j} \in \Hdr_{\vec{b}}$ such that $F(\tilde{f}_{i,j}) \in \Hone$, 
the claim is that $F(\tilde{f}_{i,j}) \in \Hdr_{\vec{Fb}}$.
By Proposition \ref{PactionF}, $F(\tilde{f}_{i,j}) \in \Hone$ when $b_{n-2}(\tilde{f}_{i,j}) =0$.
By Definition \ref{defn:FVbin}(3), it suffices to show 
$b_h(F(\tilde{f}_{i,j})) = b_{h-1}(\tilde{f}_{i,j})$ for $1 \leq h \leq n-1$.
By definition, $b_{h-1}(\tilde{f}_{i,j}) =0$ if and only if $i_h^+ + j_h^+ < p ^h -1$.
By Lemma \ref{Lpadic}(3), this is equivalent to
$p-1 + j_{n-1} +p(i_{h}^{+}+j_{h}^{+}) < p^{h+1}-1$.
By Proposition \ref{PactionF}[A], this is equivalent to $b_h(F(\tilde{f}_{i,j}))=0$.
Also notice that $b_0(F(\tilde{f}_{i,j}))=1$ since $p-1+j_{n-1} \geq p-1$.

For [B], given $\tilde{f}_{i,j} \in \Hdr_{\vec{b}}$ such that $F(\tilde{f}_{i,j}) \in \Hw$, 
the claim is that $F(\tilde{f}_{i,j}) \in \Hdr_{\vec{Fb}}$.
By Proposition \ref{PactionF}, $F(\tilde{f}_{i,j}) \in \Hw$ when $b_{n-2}(\tilde{f}_{i,j}) =1$.
By Definition \ref{defn:FVbin}(4), it suffices to show 
$b_{k-1}(\tilde{f}_{i,j})=0$ if and only if $b_k(F(\tilde{f}_{i,j})) =1$ for $1 \leq k \leq n-1$.
By definition, $b_{k-1}(\tilde{f}_{i,j})=0$ if and only if $i_k^+ + j_k^+ < p^k-1$.
By Lemma \ref{Lpadic}(4), this is equivalent to
$2p^{k+1}-2-(i_{k}^{+}+j_{k}^{+})p -p -j_{n-1} \geq p^{k+1}-1$.
By Proposition \ref{PactionF}[B], 
this is equivalent to $b_k(F(\tilde{f}_{i,j})) =1$.
Also note that $b_0(F(\tilde{f}_{i,j}))= 0 $ since $p-2-j_{n-1} < p-1$.

\end{enumerate}

Here is a sketch of 3 ways to prove that $F$ or $V$ surjects onto the claimed block.  
The first method is to compute an explicit pre-image in $\Hdr_{\vec{b}}$ for a given element of $\Hdr_{\vec{Fb}}$ or $\Hdr_{\vec{Vb}}$.  We omit this calculation.
The second method is to prove that the blocks $\Hdr_{\vec{b}}$ are irreducible $\FF_{q^2}[G]$-modules using \cite[4.7]{hj90}.
The third method is to use Corollary \ref{Cpermutation} to show that $F$ and $V$ either trivialize or act injectively on a block; in the latter case, 
the action must also be surjective by a dimension count from Lemma \ref{rem:countingblocks}.
\end{proof}

\begin{proof} {\it Proof of Theorem \ref{TblockactionFV}. }
Suppose $\vec{b} \in (\ZZ/2)^n$ is such that $T(\vec{b})=t$.
\begin{enumerate}
\item If $T(\vec{b}) \leq 2^{n-1}$, then either $b_{n-1}=1$ and $b_0=0$, or $b_{n-1}=0$ and $b_0=1$.  
Then $V\Hdr_{\vec{b}}=0$ by Lemma \ref{LactionVw} in the former case and by Proposition \ref{PactionVf} in the latter case.

\item If $T(\vec{b}) > 2^{n-1}$, then either $b_{n-1}=1$ and $b_0=1$, or $b_{n-1}=0$ and $b_0=0$.  
In the former case, by Definition \ref{defn:FVbin}(1) and Proposition \ref{Pbinary}(1), 
\begin{align*}
T(V\vec{b}) &= 2^{n-1}b_1 + \cdots + 2^2 b_{n-2} + 1\\
&=2(2^{n-1} + 2^{n-1}b_1 + \cdots + 2 b_{n-2} +1) -2^n-1=2t-(2^n+1).
\end{align*} 
In the latter case, by Definition \ref{defn:FVbin}(2) and Proposition \ref{Pbinary}(2),
\begin{align*}
T(V\vec{b}) &= 2^{n-1}(1-b_1) + \cdots + 2^2 (1-b_{n-2}) +2 +1\\ 
& =2(2^{n} - 2^{n-1}b_0 - \ldots - 2 b_{n-2})  -2^n-1=2t-(2^n+1).
\end{align*} 

%\begin{align*}
%2t-2^n-1 &=  2^{n}1 + 2^{n-1}b_1 + \ldots + 2^2 b_{n-2} +2 -2^n-1 \\
% &= 2^{n-1}b_1 + \ldots + 2^2 b_{n-2} + 1 \\
% &=T(V\vec{b})
%\end{align*}
%In the second case
%\begin{align*}
%2t-2^n-1 &=  2(2^{n} - 2^{n-1}b_0 - \ldots - 2 b_{n-2})  -2^n-1 \\ 
% &=  2^{n} - 1 -2^{n}b_0 - \ldots - 2^2 b_{n-2} \\
% &=2^{n-1} + 2^{n-2} + \ldots +2 + 1 -2^{n}b_0 - \ldots - 2^2 b_{n-2} \\
% &=  2^{n}(b_0) + 2^{n-1}(1-b_0) + \ldots + 2^2(1-b_{n-2}) +2  +1\\
% & = 2^{n-1}(1-b_1) + \ldots + 2^2 (1-b_{n-2}) +2 +1 \\
% &=T(V\vec{b}) 
%\end{align*}
%So in both cases the result holds. 

\item If $T(\vec{b})$ is odd, then $b_{n-1} = 1$ and $B_t \subset \Hw$.  Then $F(B_t) =0$ by Proposition \ref{Pbinary}(3).

\item Suppose $T(\vec{b})$ is even.  If $b_{n-2}=0$, then Proposition \ref{Pbinary}(4)[A] implies that
\[T(F\vec{b}) =2^n - (2^n + 2^{n-1} + 2^{n-2}b_0 + \ldots -2b_{n-3})=t/2.\]
%t/2 &= 2^{n-1} - 2^{n-2}b_0 - \ldots -2b_{n-3} - b_{n-2}\\
If $b_{n-2}=1$, then Propositon \ref{Pbinary}(4)[B] implies that
 \begin{align*}
T(F\vec{b})&=2^{n-2}(1-b_0) + 2^{n-3} (1-b_1) + \ldots  + 2(1-b_{n-3}) +1 \\
&=2^{n-1} - 2^{n-2}b_0 - \ldots -2b_{n-3} - b_{n-2} = t/2.
\end{align*}
\end{enumerate}
\end{proof}
%t/2 &= 2^{n-1} - 2^{n-2}b_0 - \ldots -2b_{n-3} - b_{n-2}\\
%&= 2^{n-1} -1 - 2^{n-2}b_0 - \ldots -2b_{n-3}\\
%&= 2^{n-2} + 2^{n-3} + \ldots +2 + 1 - 2^{n-2}b_0 - \ldots -2b_{n-3} \\
%&= 2^{n-2}(1-b_0) + 2^{n-3} (1-b_1) + \ldots  + 2(1-b_{n-3}) +1 \\
% &=T(F\vec{b})

\section{The Dieudonn\'e modules of the Hermitian curves} \label{Sthmorbit}

In this section, we prove Theorem \ref{Torbit} which determines the structure of the $p$-torsion group scheme ${\rm Jac}(X_q)[p]$
for all primes $p$ and $n \in \NN$. 
The result is phrased in terms of the Dieudonn\'e module, which we denote by
\[\DD(X_{p^n}):=\DD({\rm Jac}(X_{p^n})[p]).\] 
Specifically, we prove that the distinct indecomposable factors of $\DD(X_{p^n})$ 
are in bijection with orbits of $\ZZ/(2^n+1) - \{0\}$ under $\Ttwo$ and compute the multiplicity of each factor.
In Section \ref{Sstructure2}, we explain how the structure of each indecomposable factor is determined from the combinatorics of the orbit.
From this, one can compute the Ekedahl-Oort type of ${\rm Jac}(X_q)[p]$ in any specific case but it is hard (and non-illuminating) to find formulae in general.

\subsection{Combinatorial properties of orbits} \label{Sstructure}

Two elements $s,t \in \ZZ/(2^n+1) -\{0\}$ are in the same {\it orbit} under $\Ttwo$
if and only if $2^i s \equiv t \bmod 2^n+1$ for some $i \in \ZZ$. 
Every orbit $\sigma$ of $\ZZ/(2^n+1) -\{0\}$ under $\Ttwo$ is {\it symmetric} in that $(-1)\sigma = \sigma$, 
because $2^n \equiv -1 \bmod 2^n+1$.

\begin{definition} \label{D1}
Let $\sigma=(\sigma_1 \ldots, \sigma_r)$ be an orbit of $\ZZ/(2^n+1) -\{0\}$ under $\Ttwo$.  Let $\sigma_0=\sigma_r$.
\begin{enumerate}
\item The {\it length} $|\sigma|$ of $\sigma$ is $r$.
\item An entry $\sigma_i \in \sigma$ is a {\it local maximum} if $\sigma_{i-1} < \sigma_i > \sigma_{i+1}$.
and is a {\it local minimum} if $\sigma_{i-1} > \sigma_{i} < \sigma_{i+1}$.
Let ${\rm Max}(\sigma)$ (resp.\ ${\rm Min}(\sigma)$) be the set of local maximums (resp.\ minimums) of $\sigma$.

%\item An entry $\sigma_i \in \sigma$ is a {\it local maximum} if $2^{n-1} +1 \leq \sigma_i \leq 2^n$ and $1 \leq \sigma_{i-1} \leq 2^{n-1}$.\\
%An entry $\sigma_i \in \sigma$ is a {\it local minimum} if $1 \leq \sigma_{i} \leq 2^{n-1}$ and $2^{n-1} +1 \leq \sigma_i \leq 2^n$.

\item The {\it $a$-number} of $\sigma$ is $a(\sigma)=\#{\rm Max}(\sigma)=\#{\rm Min}(\sigma)$. 

\end{enumerate}
\end{definition}

\begin{lemma}
If $\sigma$ is an orbit of $\ZZ/(2^n+1) -\{0\}$ under $\Ttwo$, then $|\sigma|$ is even and $a(\sigma)$ is odd.
\end{lemma}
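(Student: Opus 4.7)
The plan is to exploit the negation map $s \mapsto -s = 2^n+1-s$, which preserves every $\Ttwo$-orbit because $2^n \equiv -1 \pmod{2^n+1}$. Since $2^n+1$ is odd, negation is fixed-point-free on $\ZZ/(2^n+1)-\{0\}$, so it pairs up the elements of $\sigma$ and $|\sigma|$ is even. Equivalently, the order $r$ of $2$ on the cyclic subgroup generated by $\sigma$ divides $2n$ but not $n$ (since $2^n \not\equiv 1$ in a quotient of size at least $3$), so $r$ is even. Writing $2n = rq$ with $q$ odd (forced by $r \nmid n$) and setting $t := r/2$, one obtains $n \equiv t \pmod r$, so the $t$-shift on the orbit coincides with negation: $\sigma_{i+t} = -\sigma_i$ cyclically.

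To prove $a(\sigma)$ odd, label each $\sigma_i$ by $\ell_i := 0$ if $\sigma_i \leq 2^{n-1}$ (``low'') and $\ell_i := 1$ otherwise (``high''). Since $s$ is low iff $-s$ is high, the cyclic label sequence obeys the antipalindromic shift $\ell_{i+t} = 1-\ell_i$. Using integer representatives in $\{1,\ldots,2^n\}$, doubling does not wrap modulo $2^n+1$ precisely when $\sigma_i \leq 2^{n-1}$, so $\sigma_{i+1} > \sigma_i$ iff $\ell_i = 0$. Hence $\sigma_i$ is a local maximum iff $(\ell_{i-1},\ell_i) = (0,1)$, so $a(\sigma)$ equals the number of cyclic $01$-transitions in $\ell_1 \ell_2 \cdots \ell_r$.

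The claim therefore reduces to a clean cyclic parity statement: any binary string of length $2t$ satisfying $\ell_{i+t} = 1-\ell_i$ has an odd number of cyclic $01$-transitions. Let $D$ denote the number of adjacent differing pairs inside $\ell_1 \ell_2 \cdots \ell_t$; by complementation the count inside $\ell_{t+1} \cdots \ell_{2t}$ is also $D$. Each of the two ``joints'' $(\ell_t, \ell_{t+1})$ and $(\ell_{2t}, \ell_1)$ contributes a sign change iff $\ell_t = \ell_1$, so the total cyclic sign count is $2D + 2\epsilon$ with $\epsilon \in \{0,1\}$ equal to $1$ iff $\ell_t = \ell_1$, giving $a(\sigma) = D + \epsilon$. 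Since walking $\ell_1 \to \ell_t$ flips the bit exactly $D$ times, $\ell_t = \ell_1$ iff $D$ is even, i.e.\ $\epsilon \equiv D+1 \pmod 2$; thus $D + \epsilon$ is always odd. The main obstacle I anticipate is pinning down cleanly that the $r/2$-shift coincides with negation and that local maxima correspond exactly to $01$-transitions in the low/high labeling; once these structural facts are secured, the cyclic-string parity count above closes the argument in a few lines.
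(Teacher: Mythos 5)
Your proof is correct. The evenness argument coincides with the paper's: negation preserves every orbit because $2^n\equiv -1 \bmod 2^n+1$, and it is fixed-point-free since the modulus is odd. For the oddness of $a(\sigma)$ you rely on the same underlying antipodal symmetry as the paper --- the half-period shift of the orbit coincides with negation --- but you organize the count quite differently. The paper puts the global minimum at $\sigma_1$, observes that the global maximum then sits at the antipodal position, and pairs the interior local minima of the ``left half'' with the local maxima of the ``right half'' by an increasing/decreasing argument, so that the local maxima other than the global one come in pairs. You instead encode the up/down behavior by the low/high bit $\ell_i$ (the observation that $\sigma_{i+1}>\sigma_i$ iff $\sigma_i\le 2^{n-1}$, which the paper itself only deploys later in the proof of Proposition \ref{Pdistinct}), identify $a(\sigma)$ with the number of cyclic $01$-transitions of an antipalindromic binary string of even length, and close with the parity identity $a(\sigma)=D+\epsilon$, $\epsilon\equiv D+1 \pmod 2$. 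Your route buys two things: a careful justification, via the order of $2$ modulo the subgroup generated by the orbit, of why the $r/2$-shift really is negation (the paper asserts the corresponding index relation without proof, and with what looks like an off-by-one in the stated indices), and a fully explicit replacement for the paper's somewhat informal counting of extrema on each half. The paper's version is shorter and more visual; yours is more self-contained and mechanical. Both are valid.
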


\begin{proof}
The length is even since $\sigma$ is symmetric under $-1$.

Without loss of generality, suppose $\sigma_1 = {\rm min}\{\sigma_i \in \sigma\}$.
Since $\sigma$ is symmetric under $-1$, the absolute maximum 
of the entries in $\sigma$ is $\sigma_{\frac{r}{2}+1}$.  More generally, $\sigma_{1+i} \equiv -\sigma_{\frac{r}{2}+i} \bmod \ZZ/(2^n+1)$.  
Thus $\sigma$ can be divided into two parts, termed the left half and the right half.
 
Consider the number of local minimums and local maximums in $\sigma$, excluding $\sigma_1$ and $\sigma_{\frac{r}{2}+1}$.
On each half, the number of local minimums equals the number of local maximums, 
by an increasing/decreasing argument.
By symmetry, the number of local minimums in the left half equals the number of local maximums in the right half.  It follows that the number of local maximums other than $\sigma_{\frac{r}{2}+1}$ is even, so $a(\sigma)$ is odd.
\end{proof}

The next definition measures the distances between the local maximums and minimums of $\sigma$. 

\begin{definition} \label{D2}
\begin{enumerate}
\item If $\sigma_i \in {\rm Min}(\sigma)$, the {\it left distance} of $\sigma_i$ is $\ell(\sigma_i)={\rm min}\{j \in \NN \mid \sigma_{i-j} \in {\rm Max}(\sigma)\}$; \\
and the {\it right distance} of $\sigma_i$ is $\rho(\sigma_i)={\rm min}\{j \in \NN \mid \sigma_{i+j} \in {\rm Max}(\sigma)\}$.

\item If $\sigma_i \in {\rm Min}(\sigma)$, the {\it left parent} of ${\sigma_i}$ is $L(\sigma_i)$ where $L(\sigma_i):=\sigma_{i-\ell(\sigma_i)}$;\\ 
and the {\it right parent} of $\sigma_i$ is $R(\sigma_i)$ where $R(\sigma_i):=\sigma_{i+\rho(\sigma_i)}$.  

%\item If $\sigma_i \in {\rm Max}(\sigma)$, the {\it $F$-age} of $\sigma_i$ is $f(\sigma_i)={\rm min}\{j \in \NN \mid \sigma_{i-(j-1)} \in {\rm Min}(\sigma)\}$;\\
%and the {\it $V$-age} of $\sigma_i$ is $v(\sigma_i)={\rm min}\{j \in \NN \mid \sigma_{i+(j-1)} \in {\rm Min}(\sigma)\}$.\\
\end{enumerate}

\end{definition}

\begin{remark}
The structure of an orbit is determined by the binary expansion of its minimal element,  see Proposition \ref{Pdistinct}. 
The symmetric property of the orbits can be used to show that the number of orbits of length $2n$ is the number of binary self-reciprocal polynomials of degree $2n$; 
which is found in sequence A000048 in the Online Encyclopedia of Integer Sequences \cite{OEIS}.  
The total number of orbits is found in sequence A000016 in \cite{OEIS}. 
\end{remark}

\subsubsection{Short orbits}

Most orbits of $\ZZ/(2^n+1) -\{0\}$ under $\Ttwo$ have maximum length $2n$.
The following results about short orbits are used in Proposition \ref{Pdistinct}, Corollary \ref{Canumber1} and Applications \ref{Aellipticrank} and \ref{ASelmer}.

\begin{lemma} \label{Lup}
Suppose $n=ck$ for $k \in \NN$ odd and let $L=(2^n+1)/(2^c+1)$.
The multiplication-by-$L$ group homomorphism $\ZZ/(2^c+1) \hookrightarrow \ZZ/(2^n+1)$, given by
$\alpha \mapsto L \alpha$, induces a bijection \[\beta:\sigma \mapsto \sigma_L\]
between orbits $\sigma$ of $\ZZ/(2^c+1) -\{0\}$ under $\Ttwo$ and
orbits $\sigma_L$ of $\langle L \rangle  \cap (\ZZ/(2^n+1) - \{0\})$ under $\Ttwo$.
\end{lemma}

\begin{proof}
Omitted.
\end{proof}

\begin{lemma} \label{Ldown}
Suppose $\hat{\sigma}$ is an orbit of $\ZZ/(2^n+1)-\{0\}$ under $\Ttwo$ with $|\hat{\sigma}| < 2n$.  
Then $n=ck$ for some $k \in \NN$ odd and $\hat{\sigma}=\sigma_L$ 
for some orbit $\sigma$ of $\ZZ/(2^c+1) -\{0\}$ under $\Ttwo$.
\end{lemma}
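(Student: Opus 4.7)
\begin{prf}{Proof Plan}
The plan is to control the possible orbit lengths $m < 2n$ on $\ZZ/(2^n+1)$ and then identify the subgroup in which $\hat\sigma$ must live. Pick any $t \in \hat\sigma$ and let $m=|\hat\sigma|$. Then $m$ is the smallest positive integer with $(2^m-1)t \equiv 0 \pmod{2^n+1}$; in particular $m \mid 2n$, since $2^{2n} \equiv 1 \pmod{2^n+1}$. The orbit lies in the subgroup of elements killed by $(2^m-1)$, namely $\frac{2^n+1}{\gcd(2^m-1,\,2^n+1)} \cdot \ZZ/(2^n+1)$, so I need to compute this gcd.

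Next, I would rule out all orbit lengths $m<2n$ except those of the form $m=2c$ with $n/c$ odd. If $m$ is odd, then $m \mid 2n$ forces $m \mid n$, whence $2^m-1 \mid 2^n-1$ and $\gcd(2^m-1,2^n+1) \mid \gcd(2^n-1,2^n+1)=1$; the only $t$ satisfying the fixed-point condition is $t=0$, contradicting $t \in \ZZ/(2^n+1)-\{0\}$. So $m=2c$ with $c \mid n$. Factor $2^{2c}-1=(2^c-1)(2^c+1)$: again $2^c-1 \mid 2^n-1$, so $\gcd(2^c-1,2^n+1)=1$, and $\gcd(2^{2c}-1,2^n+1)=\gcd(2^c+1,2^n+1)$. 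Now split on the parity of $n/c$: if $n/c$ is even, then $2^{2c} \equiv 1 \pmod{2^c+1}$ gives $2^n+1 \equiv 2 \pmod{2^c+1}$, so the gcd is $1$ (both numbers are odd), and again no nonzero $t$ works. Thus $n/c$ must be odd.

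In the remaining case $n=ck$ with $k$ odd, the standard factorization $a^k+1=(a+1)(a^{k-1}-\cdots+1)$ with $a=2^c$ shows $2^c+1 \mid 2^n+1$, and $\gcd(2^c+1,2^n+1)=2^c+1$. Therefore the subgroup of elements killed by $(2^m-1)$ is exactly $L \cdot \ZZ/(2^n+1)$ where $L=(2^n+1)/(2^c+1)$, and $\hat\sigma$ lies in this subgroup minus $\{0\}$. By Lemma \ref{Lup}, the multiplication-by-$L$ map identifies $\Ttwo$-orbits of $\ZZ/(2^c+1)-\{0\}$ with $\Ttwo$-orbits of $\langle L\rangle \cap (\ZZ/(2^n+1)-\{0\})$, so $\hat\sigma = \sigma_L$ for a unique orbit $\sigma$ of $\ZZ/(2^c+1)-\{0\}$ under $\Ttwo$, as required.

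The main obstacle is the gcd calculation in the case-by-case analysis of $m$, specifically verifying that $\gcd(2^c+1,2^n+1)=1$ when $n/c$ is even. Everything else is a clean translation between the order-of-$2$ interpretation of the orbit length and the divisibility structure of $2^n+1$, and the final identification is immediate from Lemma \ref{Lup}.
\end{prf}
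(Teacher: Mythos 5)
Your proof is correct, and it reaches the conclusion by a genuinely different mechanism than the paper for the key step. The paper normalizes to the orbit of $L=\gcd(\sigma_1,2^n+1)$ by multiplying by a unit, observes that $L,2L,\dots,2^cL$ is a strictly increasing sequence of integers below $2^n+1$ (using minimality of $L$ in its orbit), and combines this with the symmetry $2^cL\equiv -L$ to force $(2^c+1)L=2^n+1$; in particular it takes the even length $2c$ as given from the earlier lemma on symmetric orbits. You instead read the orbit length $m$ as the order of $2$ modulo $(2^n+1)/\gcd(t,2^n+1)$, locate the orbit inside the subgroup annihilated by $2^m-1$, and compute $\gcd(2^m-1,2^n+1)$ via the factorization $2^{2c}-1=(2^c-1)(2^c+1)$ together with $\gcd(2^n-1,2^n+1)=1$ and the parity case split on $n/c$. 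What your route buys: it is self-contained elementary number theory, it does not rely on the minimality/monotonicity observation (whose justification in the paper is somewhat terse), and it handles the odd-length case explicitly rather than importing evenness from the symmetry lemma. What the paper's route buys: it reuses the already-established symmetric structure of orbits, so the proof is shorter in context. One small point worth making explicit in a final write-up: since $\langle L\rangle$ is stable under multiplication by $2$, the containment $\hat{\sigma}\subseteq \langle L\rangle-\{0\}$ does imply that $\hat{\sigma}$ is an orbit of $\langle L\rangle\cap(\ZZ/(2^n+1)-\{0\})$ under $\Ttwo$, which is what Lemma \ref{Lup} requires before you can write $\hat{\sigma}=\sigma_L$.
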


\begin{proof}
Let $\hat{\sigma}$ be an orbit of length $2c$ where $c<n$.
Without loss of generality, suppose $\sigma_1={\rm min}\{\sigma_i \in \hat{\sigma}\}$. Let $L = \gcd(\sigma_1, 2^{n-1})$ and write $\sigma_1 = LM$.
Let $M^{-1}$ be the inverse of $M$ modulo $2^n+1$.  Then
$
\sigma_{M^{-1}}=(L,2L,\ldots,2^cL,-L,-2L,\ldots,-2^cL)
$
is another orbit of $\ZZ/(2^n+1) -\{0\}$ under $\Ttwo$ with length $2c$ and $a$-number 1.  The sequence $L,2L,\ldots,2^cL$ is strictly increasing and $2^cL < 2^n+1$.
Now, $c$ is the smallest positive integer such that $2^c L \equiv - L \bmod 2^n+1$. Thus $(2^c+1)L =m(2^n+1)$ for some $m \in \ZZ$.  However, 
The fact that $L < (2^n+1)/2^c$ implies that $(2^c+1)L=2^n+1$ and so $n=ck$ for some $k \in \NN$ odd.
%\[
%(2^c+1)L < (2^c+1)\frac{2^n+1}{2^c} < \frac{3}{2} (2^n+1).
%\]
%Notice that $L$ divides all elements of $\hat{\sigma}$.
Let $\sigma = \frac{1}{L} \hat{\sigma}:=(\frac{\sigma_1}{L}, \ldots, \frac{\sigma_r}{L})$.
Then $\sigma$ is an orbit of $\ZZ/(2^c+1) -\{0\}$ under $\Ttwo$
and $\hat{\sigma}=\sigma_L$.
\end{proof}

\subsection{The construction of a Dieudonn\'e module for each orbit} \label{Sstructure2}

We define a Dieudonn\'e module $\DD(\sigma)$ for every orbit $\sigma$ of $\ZZ/(2^n+1) -\{0\}$ under $\Ttwo$ in terms of generators and relations.  In the next subsection we prove that these modules are in fact the indecomposable factors of the Dieudonne\'e module of $X_{p^n}$.

For convenience, we replace an entry $\sigma_i \in \sigma$ by a variable $B_{\sigma_i}$. 
If $\sigma_i \in {\rm Max}(\sigma)$, then $B_{\sigma_i}$ is a {\it generator block}.
If $\sigma_i \in {\rm Min}(\sigma)$, then $B_{\sigma_i}$ is a {\it relation block}.

\begin{definition} \label{D3}
Let $\sigma=(\sigma_1 \ldots, \sigma_r)$ be an orbit of $\ZZ/(2^n+1) -\{0\}$ under $\Ttwo$.
The Dieudonn\'e module $\DD(\sigma)$ is the quotient of the left $\EE$-module generated by variables
\[\{B_{\sigma_i} \mid \sigma_i \in {\rm Max}(\sigma)\},\]
by the left ideal of relations generated by
%\[\{F^{f(\sigma_i)}B_{\sigma_i}=0, \ V^{v(\sigma_i)}B_{\sigma_i}=0 \mid {\rm \ for \ all \ }\sigma_i \in {\rm Max}(\sigma)\},\]
%and
\[\{V^{\ell(\sigma_i)} B_{L(\sigma_i)} + F^{\rho(\sigma_i)} B_{R(\sigma_i)} = 0 \mid  {\rm \ for \ all \ }\sigma_i \in {\rm Min}(\sigma)\}.\]
\end{definition}

The following diagram illustrates the definition.
\[
\xymatrix{
B_{L(\sigma_i)} \ar@{->}[rd]_{V^{\ell(\sigma_i)}} & & B_{R(\sigma_i)} \ar@{->}[ld]^{F^{\rho(\sigma_i)}} \\ 
& B_{\sigma_i} & \\
}
\]

\begin{example} \label{Eorbit1}
The orbit of $1$ in $\ZZ/(2^n+1) -\{0\}$ under $\Ttwo$ is $\sigma = (1,2,\ldots, 2^n, 2^n-1, \ldots, 2^{n-1}+1)$.
It has $a(\sigma)=1$.
The generator block is $B_{2^n}$.  The relation block is $B_1$.
Also $\ell(\sigma_1)=\rho(\sigma_1)=n$.
Thus 
\[\DD(\sigma) \simeq \EE /\EE(F^n+V^n).\]
This is the Dieudonn\'e module of the unique symmetric  ${\rm BT}_1$ group scheme of rank $p^{2n}$ having $p$-rank $0$ and $a$-number $1$.
This group scheme, which we denote by $I_{n,1}$, has Ekedahl-Oort type $[0,1,2, \ldots, n-1]$; see \cite[Lemma 3.1]{Pr:sg} for details.
\end{example}

\begin{example} \label{EI43}
When $n=4$, an orbit of $\Ttwo$ on $\ZZ/17$ is $\sigma=\{3,6,12,7,14,11,5,10\}$ as illustrated below.
{\tiny \[
\xymatrix{
 & & B_{12} \ar@{->}[rd]^{V} & & B_{14} \ar@{->}[rd]^{V} & & & &  \\
 & B_6 \ar@{->}[ru]^{F^{-1}} & & B_7 \ar@{->}[ru]^{F^{-1}} & & B_{11} \ar@{->}[rd]^{V} & & B_{10} \ar@{->}[rd]^{V}& \\
B_3 \ar@{->}[ru]^{F^{-1}} & & & & & & B_5 \ar@{->}[ru]^{F^{-1}} & & B_3 \\ 
}
\]}
It has $a(\sigma)=3$.
The generator blocks are $B_{12}$, $B_{14}$ and $B_{10}$ and the relation blocks are $B_3$, $B_7$, and $B_5$.
The relations are $FB_{14}+VB_{12}=0$ and $FB_{10}+V^2B_{14}=0$ and $F^{2}B_{12}+VB_{10}=0$.
Thus 
\[\DD(\sigma) = (\EE B_{12} \oplus \EE B_{14} \oplus \EE B_{10})/\EE(FB_{14}+VB_{12}, FB_{10}+V^2B_{14}, F^2B_{12}+VB_{10}).\]
Then $\DD(\sigma) \simeq \DD(I_{4,3})$ where $I_{4,3}$ is the rank 8 ${\rm BT}_1$ with Ekedahl-Oort type $[0,0,1,1]$ \cite[Remark 5.13]{EP}. 
\end{example}

\begin{lemma} \label{LDsigma}
The left $\EE$-module $\DD(\sigma)$ is symmetric, is trivialized by both $F$ and $V$, has dimension $|\sigma|$, and has $a$-number $a(\sigma)$.
\end{lemma}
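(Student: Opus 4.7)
The plan is to build a $k$-basis for $\DD(\sigma)$ organized as ``tents'' over the local maxima. For each $M \in \mathrm{Max}(\sigma)$, the cyclic submodule generated by $B_M$ is spanned by a forward $V$-chain $B_M, VB_M, \ldots, V^{\ell(m)}B_M$ descending to the right-hand local minimum $m$, together with a backward $F$-chain $B_M, FB_M, \ldots, F^{\rho(m')}B_M$ descending to the left-hand local minimum $m'$. Multiplying the defining relation at any local minimum $m$ by $F$ or by $V$ yields $F^{\rho(m)+1}B_{R(m)} = -FV^{\ell(m)}B_{L(m)} = 0$ and $V^{\ell(m)+1}B_{L(m)} = 0$ from $FV = VF = 0$, so every chain terminates after finitely many steps; this simultaneously establishes the ``trivialized by $F$ and $V$'' (nilpotence) claim.

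For the dimension, the union of the apex, the $V$-chain (including its tip at the right minimum), and the $F$-chain (excluding its tip, which is identified with the tip of a neighboring tent by the relation) forms a spanning set of $\DD(\sigma)$ of total size $\sum_i\ell(m_i)+\sum_i\rho(m_i)$, where the indices run over the local minima. The involution $s \mapsto -s$ on $\sigma$ (well-defined since $2^n\equiv-1\pmod{2^n+1}$ ensures the orbits are symmetric) interchanges $V$-steps with $F^{-1}$-steps along the orbit by Theorem \ref{TblockactionFV}, forcing $\sum\ell = \sum\rho = |\sigma|/2$, so the spanning set has $|\sigma|$ elements. For the matching lower bound I will construct an explicit $\EE$-module with $k$-basis $\{e_s:s\in\sigma\}$ on which $F$ and $V$ act via the $\times 2$ orbit structure of Theorem \ref{TblockactionFV}, with signs chosen consistently so that the defining relations hold; this exhibits a quotient of $\DD(\sigma)$ of dimension exactly $|\sigma|$, forcing equality.

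With this basis in hand, $\ker F \cap \ker V$ can be read off directly: an element $V^d B_M$ with $1\leq d<\ell$ lies in $\ker F$ by $FV=0$ but not in $\ker V$; an element $F^d B_M$ with $1\leq d<\rho$ lies in $\ker V$ but not $\ker F$; and the apex $B_M$ lies in neither. Only the $a(\sigma)=\#\mathrm{Max}(\sigma)$ basis elements sitting at the local minima (each equal to both $V^{\ell}B_L$ and $-F^{\rho}B_R$, hence killed by one more application of either operator) belong to both kernels, so the $a$-number of $\DD(\sigma)$ equals $a(\sigma)$, using the identification $a=\dim_k\ker F\cap \ker V$ coming from $a=\dim_k\Hom(\alpha_p,\GG)$ recalled in Section \ref{Snotation}.

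Finally, to show $\DD(\sigma)$ is symmetric I plan to use the same involution $s\mapsto -s$ on $\sigma$, which exchanges local maxima with local minima and swaps the $V$-direction with the $F$-direction along chains. This will induce a $k$-linear isomorphism of $\DD(\sigma)$ with its Dieudonn\'e dual (in which $F$ and $V$ are swapped), and will produce a nondegenerate alternating pairing $\langle B_s, B_{-s}\rangle$ satisfying the $p$-twisted adjunction $\langle Fx,y\rangle^p = \langle x,Vy\rangle$ chain by chain. The main obstacle I expect is pinning down the scalars in the pairing so that it is genuinely antisymmetric and compatible with the $p$-linearity of $F$ and $V$; since the pairing is determined by its values on a single representative from each $\{s,-s\}$-pair and the tent structure is uniform across the orbit, this reduces to a finite combinatorial sign-check involving the distances $\ell,\rho$ and the minus sign in the defining relation at each local minimum.
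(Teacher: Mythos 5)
Your proposal is correct and follows the same skeleton as the paper's (quite terse) proof: the nilpotence claim is obtained exactly as in the paper by multiplying each relation at a local minimum by $F$ and by $V$ and invoking $FV=VF=0$, and the dimension count via the ``tent'' chains $B_M, VB_M, \ldots, V^{\ell}B_M$ and $B_M, FB_M, \ldots, F^{\rho}B_M$ with the $F$-tip glued to a neighboring $V$-tip is the paper's statement that the dimension is the number of distinct images of the generator blocks under powers of $F$ and $V$. Where you genuinely diverge, you add value: the paper simply asserts that this spanning set is a basis, whereas you supply the missing lower bound by exhibiting an explicit $\EE$-module with basis $\{e_s : s\in\sigma\}$ realizing the $\times 2$ dynamics (this works --- the only point to watch is inserting one sign per $F$-chain so that the relation reads $V^{\ell}e_L+F^{\rho}e_R=0$ rather than $2e_m=0$, which matters for $p>2$). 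For the $a$-number you use $a=\dim_k(\ker F\cap\ker V)$ and locate the intersection at the local minima, while the paper counts generators, i.e.\ computes $\dim_k M/(FM+VM)$; both are valid and give $\#\mathrm{Min}(\sigma)=\#\mathrm{Max}(\sigma)=a(\sigma)$. The one place your argument is still a plan rather than a proof is the symmetry: the sign-check for the alternating pairing $\langle B_s,B_{-s}\rangle$ with $\langle Fx,y\rangle^p=\langle x,Vy\rangle$ is left open. This is a real piece of unfinished bookkeeping, but it is no worse than the paper, whose entire argument for this point is the sentence ``$\DD(\sigma)$ is symmetric since $\sigma$ is symmetric''; your observation that $s\mapsto -s$ reverses the orbit and exchanges the $F$- and $V$-directions is precisely the content behind that sentence, and the scalars can indeed be normalized chain by chain since each $\{s,-s\}$ pair meets each chain in a controlled way.
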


\begin{proof}
First, $\DD(\sigma)$ is symmetric since $\sigma$ is symmetric.
Second, the relations $FV=VF=0$ imply that $V^{\ell(\sigma_i)+1}B_{L(\sigma_i)}=0$ and 
$F^{\rho(\sigma_i)+1}B_{R(\sigma_i)}=0$ for each $\sigma_i \in {\rm Min}(\sigma)$.
Since every generator block is both a left and a right parent, powers of $F$ and $V$ trivialize all the 
generator blocks.
Third, the dimension equals the number of distinct images of the generator blocks under powers of $F$ and of $V$, which is exactly $|\sigma|$.
Finally, the $a$-number equals the number of generators as an $\EE$-module.
\end{proof}

\begin{proposition} \label{Pdistinct}
If $\sigma'$ and $\sigma$ are distinct orbits of $\ZZ/(2^n+1) -\{0\}$ under $\Ttwo$, then $\DD(\sigma) \not \simeq \DD(\sigma')$.
\end{proposition}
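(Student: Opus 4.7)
The plan is to reconstruct the orbit $\sigma$ from the abstract $\EE$-module structure of $M := \DD(\sigma)$ by extracting a cyclic combinatorial invariant that uniquely determines $\sigma$.

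First I would observe that, by Theorem \ref{TblockactionFV} and Corollary \ref{Cmult2}, each $\sigma_j \in \sigma$ is classified by parity and size: a local maximum is precisely an even element with $\sigma_j > 2^{n-1}$ (the previous step doubles without wrap and the next step wraps), while a local minimum is precisely an odd element with $\sigma_j \le 2^{n-1}$. Translating this to the module, $B_{\sigma_j} \notin FM+VM$ iff $\sigma_j \in \mathrm{Max}(\sigma)$, and $B_{\sigma_j} \in \mathrm{Ker}(F) \cap \mathrm{Ker}(V)$ iff $\sigma_j \in \mathrm{Min}(\sigma)$. Thus the local maxima give a canonical basis of $M/(FM+VM)$ and the local minima a canonical basis of $\mathrm{Ker}(F) \cap \mathrm{Ker}(V)$, both of dimension $a(\sigma)$. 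For each local-max generator $g_j$, its $V$-Jordan-chain length is $\ell_j+1$ and its $F$-Jordan-chain length is $\rho_{j-1}+1$, recovering the distances to the two adjacent local minima from Definition \ref{D2}. The defining relation $V^{\ell(m)} B_{L(m)} + F^{\rho(m)} B_{R(m)} = 0$ at each local minimum $m$ identifies the (up-to-sign) common value of the bottom of a $V$-chain with the bottom of an $F$-chain, pairing the corresponding two generators as cyclic neighbors; doing this at every local minimum produces a canonical cyclic ordering of the generators.

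Combining these data yields a canonical cyclic word $c = (c_1, \ldots, c_r) \in \{U,D\}^r$ with $r = |\sigma|$, in which between each consecutive pair of generators there are $\ell$ letters $D$ followed by $\rho$ letters $U$, where $\ell$, $\rho$ come from the intervening minimum. Writing $c_j \in \{0,1\}$ with $c_j = 1$ for $D$, the orbit recurrence $\sigma_{j+1} = 2\sigma_j - c_j(2^n+1)$ iterated around the cycle gives, as an identity in $\ZZ$,
\[
(2^r - 1)\, \sigma_1 \;=\; (2^n+1)\sum_{j=1}^{r} c_j\, 2^{r-j},
\]
which uniquely determines $\sigma_1 \in \{1,\ldots,2^n\}$ from $c$, up to the cyclic rotation of $c$ corresponding to the choice of starting element. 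Distinct orbits therefore yield distinct cyclic words, and $\DD(\sigma) \not\simeq \DD(\sigma')$ whenever $\sigma \ne \sigma'$.

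The main obstacle will be verifying rigorously that the pairing of $V$-chain-bottoms with $F$-chain-bottoms at socle elements is canonical and intrinsic to $M$, so that it unambiguously recovers the cyclic ordering of generators from the bare $\EE$-module rather than depending on the specific block presentation. Concretely, one must check that each nonzero socle element lies in a unique $V^{\ell} M \setminus V^{\ell+1}M$ and a unique $F^{\rho}M \setminus F^{\rho+1}M$ for specific maximal $\ell, \rho$, and that the corresponding two generator classes are canonically associated with this socle element. This follows from an explicit block-level analysis, using that among blocks $B_t$ only those arising from other local maxima can contribute to higher $V$- or $F$-iterates at the relevant socle position, but such contributions are ruled out by cyclic incompatibility.
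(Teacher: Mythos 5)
Your proposal is correct, and it rests on the same underlying idea as the paper's proof --- namely that the isomorphism class of $\DD(\sigma)$ determines the cyclic ``up/down'' word of the orbit (equivalently, the cyclic word in $F^{-1}$ and $V$), which in turn pins down $\sigma$ --- but the execution is genuinely different. The paper first uses Lemma \ref{LDsigma}(3) to recover $|\sigma|$ and then invokes Lemmas \ref{Lup} and \ref{Ldown} to reduce to orbits of full length $2n$; for those it observes that the up/down pattern is literally the binary expansion of the minimal element $\sigma_1$, read off bit by bit. You avoid the reduction to full-length orbits entirely: you work with an orbit of arbitrary length $r$, encode the pattern as a cyclic word $c\in\{U,D\}^r$, and solve the doubling recurrence around the cycle to get $(2^r-1)\sigma_1=(2^n+1)\sum_j c_j 2^{r-j}$, which recovers $\sigma_1$ up to the rotation ambiguity (and rotations only move the base point within the same orbit, so the orbit is well defined). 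Your version is also considerably more explicit than the paper about \emph{why} the word is a module invariant: your identifications (local maxima give a basis of $M/(FM+VM)$, local minima give the socle $\Ker F\cap\Ker V$, the $V$- and $F$-Jordan chain lengths of a generator recover the distances $\ell$ and $\rho$ to the adjacent minima, and the relations pair chain bottoms to produce the cyclic order) all check out against Theorem \ref{TblockactionFV} and Corollary \ref{Cmult2}. The ``main obstacle'' you flag --- that this pairing is intrinsic to the abstract $\EE$-module rather than to the block presentation --- is a real point, but it is exactly the step the paper also leaves implicit in the sentence ``the structure of $\DD(\sigma)$ determines the binary expansion of $\sigma_1$''; it is the content of Kraft's classification of ${\rm BT}_1$ modules by cyclic words, and your sketch of how to verify it directly is sound. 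Net effect: your argument is self-contained where the paper leans on the short-orbit lemmas, at the cost of a slightly heavier intrinsic-invariant verification up front.
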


\begin{proof}
By Lemma \ref{LDsigma}(3), the structure of $\DD(\sigma)$ determines $|\sigma|$.
The bijection $\beta$ in Lemma \ref{Lup} preserves the $\EE$-module structure of the Dieudonn\'e module: $\DD(\sigma_L) \simeq \DD(\sigma)$.
By Lemmas \ref{Lup} and \ref{Ldown}, it suffices to restrict to the case $|\sigma|=2n$.
%Let $\sigma=(\sigma_1,\ldots,\sigma_{2n})$ be an orbit of length $2n$.
Without loss of generality, suppose $\sigma_1={\rm min}\{\sigma_i \in \sigma\}$. 
By minimality, $\sigma_1 < 2^{n-1}$ (otherwise $-\sigma_1 < \sigma_1$) and $\sigma_1$ is odd.  
Notice that  $\sigma_i > \sigma_{i+1}$ if and only if $\sigma_i > 2^{n-1}$ (the last bit of $\sigma_i$ equals 1).  
Since $\sigma_{i} = 2\sigma_{i-1} \mod 2^n+1$, the last bit of $\sigma_i$ is the penultimate bit of 
$\sigma_{i-1}$.  By induction, $\sigma_i > \sigma_{i+1}$ if and only if the $(n-i-1)$st bit of $\sigma_1$ equals $1$ for $1\leq i \leq n-1$.  
Thus the structure of $\DD(\sigma)$ determines the binary expansion of $\sigma_1$.
\end{proof}

\subsection{Main Theorem}

For all primes $p$ and $n \in \NN$, 
we find the structure of the Dieudonn\'e module $\DD(X_{p^n})$ of the $p$-torsion group scheme of the Jacobian of the Hermitian curve $X_{p^n}$.
The $\EE$-module structure of $\DD(X_{p^n})$ is determined by its distinct indecomposable factors, which are in bijection with orbits of $\ZZ/(2^n+1) -\{0\}$ under $\Ttwo$, 
and their multiplicities.
The $\EE$-module structure of each indecomposable factor is determined by the combinatorics of the corresponding orbit, as described in Section \ref{Sstructure2}.

\begin{definition}
If $1 \leq t \leq 2^n$ and $s \equiv 2t \bmod 2^n+1$, then ${\rm dim}_k(B_s)={\rm dim}_k(B_t)$ by Theorem \ref{TblockactionFV}(2)(4).
If $\sigma$ is an orbit of $\ZZ/(2^n+1) -\{0\}$ under $\Ttwo$,
its {\it multiplicity} is $m(\sigma):={\rm dim}_k(B_{\sigma_i})$ for any $\sigma_i \in \sigma$.
\end{definition}

The multiplicity $m(\sigma)$ was computed in Lemma \ref{rem:countingblocks}.

\begin{theorem} \label{Torbit}
For all primes $p$ and $n \in \NN$, 
there is a bijection between orbits of $\ZZ/(2^n+1) - \{0\}$ under $\Ttwo$ and distinct indecomposable factors in the Dieudonn\'e module $\DD(X_q)$ of ${\rm Jac}(X_q)[p]$
given by $\sigma \to \DD(\sigma)$.
The multiplicity of $\DD(\sigma)$ in $\DD(X_q)$ is $m(\sigma)$. 
\end{theorem}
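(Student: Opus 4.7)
The plan is to decompose $\Hdr$ as an $\EE$-module into pieces indexed by orbits of $\Ttwo$ on $\ZZ/(2^n+1)-\{0\}$, and identify each piece with the corresponding power of $\DD(\sigma)$.

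First, by Corollary \ref{Cmult2}, each orbit $\sigma=(\sigma_1,\ldots,\sigma_r)$ yields an $\EE$-stable subspace
\[
M_\sigma := \bigoplus_{\sigma_i\in\sigma} B_{\sigma_i},
\]
and since the orbits partition $\ZZ/(2^n+1)-\{0\}$ we obtain $\Hdr=\bigoplus_\sigma M_\sigma$ as $\EE$-modules. From Theorem \ref{TblockactionFV} all blocks $B_{\sigma_i}$ inside a single $M_\sigma$ share the common $k$-dimension $m(\sigma)$, and the arrows induced by $F$ and $V$ realize the combinatorial cycle from Section \ref{Sstructure}: one checks that at each local maximum $\sigma_M$ of $\sigma$ the index is even and $>2^{n-1}$, so both $F$ and $V$ restrict to isomorphisms onto adjacent blocks; at each local minimum $\sigma_m$ the index is odd and $\leq 2^{n-1}$, so both $F$ and $V$ vanish on $B_{\sigma_m}$; and on an intermediate block exactly one of $F$ or $V$ (depending on whether the block lies on an ascending or descending arc of $\sigma$) acts as a nonzero isomorphism while the other vanishes.

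Because $FV=VF=0$, the cyclic submodule $\EE\cdot B_{\sigma_M}$ generated by a local maximum $\sigma_M$ is spanned by the blocks reached by iterating $V$ along the descending arc going forward to the next minimum and by iterating $F$ along the descending arc going backward to the previous minimum. Summing over the $a(\sigma)$ local maxima gives $M_\sigma=\sum_M \EE\cdot B_{\sigma_M}$, with overlaps occurring exactly at the local minima: at each $\sigma_m$ the subspaces $V^{\ell(\sigma_m)}(B_{L(\sigma_m)})$ and $F^{\rho(\sigma_m)}(B_{R(\sigma_m)})$ both fill $B_{\sigma_m}$, and identifying them produces precisely the relation defining $\DD(\sigma)$ in Definition \ref{D3}. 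Hence $M_\sigma$ is presented, as an $\EE$-module, by $a(\sigma)$ copies of $k^{m(\sigma)}$ at the local maxima subject to $a(\sigma)$ relations of the prescribed shape, and by Lemma \ref{LDsigma} it has the expected dimension $|\sigma|\cdot m(\sigma)$.

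To realize $M_\sigma\cong \DD(\sigma)^{m(\sigma)}$ explicitly, I would fix a reference local maximum $\sigma_{M_1}$, choose a tentative basis $v_{M_1}^{(1)},\ldots,v_{M_1}^{(m(\sigma))}$ of $B_{\sigma_{M_1}}$, and propagate it around the cycle of local maxima by solving $F^{\rho(\sigma_m)}(v_{R(\sigma_m)}^{(k)}) = -V^{\ell(\sigma_m)}(v_{L(\sigma_m)}^{(k)})$ at each intervening minimum. Traversing the orbit once produces a semi-linear automorphism $\Phi$ of $B_{\sigma_{M_1}}$, whose semi-linearity exponent is $p^{-|\sigma|}$ after tracking the $V$'s and $F^{-1}$'s involved. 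Since $k=\overline{\FF}_p$, a standard semi-linear-algebra argument (of Lang--Steinberg type) shows that $\Phi$ admits a $k$-basis of fixed vectors, yielding $m(\sigma)$ compatible systems of generators and hence an inclusion $\DD(\sigma)^{m(\sigma)}\hookrightarrow M_\sigma$, which is an isomorphism by dimension count. Summing over all orbits gives $\DD(X_q)\cong\bigoplus_\sigma\DD(\sigma)^{m(\sigma)}$, and Proposition \ref{Pdistinct} guarantees that distinct orbits contribute pairwise non-isomorphic indecomposable factors, realizing the claimed bijection. \textbf{The main obstacle} I anticipate is the monodromy step: verifying that $\Phi$ is well-defined and invertible, and that its fixed points supply the $m(\sigma)$ independent generators needed to span $M_\sigma$.
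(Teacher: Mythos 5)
Your proposal is correct, and its skeleton coincides with the paper's: both decompose $\Hdr$ into the $\EE$-stable subspaces spanned by the blocks of a single orbit, identify generator blocks with local maxima and relation blocks with local minima (your parity/size observations about local extrema are exactly right), and reduce everything to trivializing the monodromy obtained by traversing the orbit once. The one step where you genuinely diverge is that trivialization. The paper forms the word $\omega$ in $V$ and $F^{-1}$ around the cycle, notes via Corollary \ref{Cpermutation} that it is a generalized permutation matrix, and then uses the existence of a final filtration (stable under $V$ and $F^{-1}$ and refining the canonical filtration by blocks) to produce an eigenvector of $\omega$, one dimension at a time. You instead observe that the monodromy $\Phi$ is a bijective $p^{-|\sigma|}$-semilinear operator on $B_{\sigma_{M_1}}$ over $k=\overline{\FF}_p$ and invoke Lang--Steinberg/Hilbert 90 to get a full basis of fixed vectors in one stroke. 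This is a legitimate and arguably cleaner alternative: it avoids presupposing Oort's final filtration machinery and directly yields the $m(\sigma)$ normalized generator systems. The obstacle you flag --- well-definedness and invertibility of $\Phi$ --- is in fact already settled by Theorem \ref{TblockactionFV}: along an ascending arc every block index is at most $2^{n-1}$ and even, so $F^{\rho(\sigma_m)}\colon B_{R(\sigma_m)}\to B_{\sigma_m}$ is a composition of the isomorphisms of Theorem \ref{TblockactionFV}(4), and likewise $V^{\ell(\sigma_m)}\colon B_{L(\sigma_m)}\to B_{\sigma_m}$ along a descending arc by part (2); hence each propagation step is the bijection $-(F^{\rho(\sigma_m)})^{-1}\circ V^{\ell(\sigma_m)}$, and the semilinearity exponent $p^{-|\sigma|}$ follows since each of the $|\sigma|$ edges of the cycle is traversed exactly once by a $p^{-1}$-linear letter.
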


\begin{proof}
Suppose $\sigma$ is an orbit of $\ZZ/(2^n+1) - \{0\}$ under $\Ttwo$.
Consider 
\[W_\sigma:={\rm Span}_{\sigma_i \in \sigma} B_{\sigma_i} \subset \Hdr.\]
By Theorem \ref{TblockactionFV}, $W_\sigma$ is stable under the action of $V$ and $F^{-1}$.

Write $\sigma=(\sigma_1, \ldots, \sigma_r)$, choosing $\sigma_1$ to be a local minimum with maximal left distance. 
Let $B=B_{\sigma_1}$.
Define a word $\omega=\omega_r \cdots \omega_1$ in the variables $F^{-1}$ and $V$ as follows: 
$\omega_i = F^{-1}$ if $1 \leq \sigma_i \leq 2^{n-1}$ and $\omega_i = V$ if $2^{n-1}+1 \leq \sigma_i \leq 2^n$.
By Corollary \ref{Cmult2}, the word $\omega$ yields an isomorphism $\omega: B \to B$; (it is $p^{-r}$-linear).
Applying Corollary \ref{Cpermutation} shows that $\omega$ is represented by a {\it generalized permutation matrix}, 
namely a matrix with exactly one non-zero entry in each row and column, with respect to the basis $\BB \cap B$.
This implies that an iterate of $\omega$ can be represented by a diagonal matrix.

In fact, $\omega$ itself can be represented by a diagonal matrix; in other words, that there is a basis of eigenvectors for $\omega$.
To see this, consider the final filtration for the $\EE$-module $W_\sigma$ as described in Section \ref{Seotype}.  
First, $W_\sigma$ has rank $p^{rm}$ where $m={\rm dim}(B)$.
It has a canonical filtration $0=M_0 \subset M_1 \subset \cdots \subset M_r$ where ${\rm dim}(M_i)=im$.
Here each $M_i$ is a union of blocks $B_j$ from the orbit; in particular, $M_r = W_\sigma$ and  $M_1=B$.
The final filtration  $N_1 \subset N_2 \subset \cdots \subset N_{rm}$ is a refinement of the canonical filtration, so $N_{im}=M_i$.  
It is a filtration of $W_\sigma$ as a $k$-vector space which is stable under the action of $V$ and $F^{-1}$ such that $i={\rm dim}(N_i)$.

Let $x_1$ denote a non-zero element of  $N_1 \subset M_1 =B$.  Since the final filtration is stable under $F^{-1}$ and $V$, 
the element $y_1=\omega_1(x_1)=F^{-1}(x_1)$ generates $N_{m+1}/M_1$.  Similarly, $N_{im+1}/M_i$ is generated by an image of 
$x_1$ under a portion of the word $\omega$.  Going through the whole word, $\omega(x_1)$ is a generator for $N_1/N_0$.
Thus $\omega(x_1)$ is a constant multiple of $x_1$.

Thus there is an $\EE$-module isomorphism $W_\sigma \simeq \DD(\sigma)^{m(\sigma)}$.
By Proposition \ref{Pdistinct}, the factors $\DD(\sigma)$ of $\DD(X_q)$ are distinct and are in bijection with orbits $\ZZ/(2^n+1) - \{0\}$ under $\Ttwo$ 
\end{proof}

Recall the definition of {\it break points} from Section \ref{Seotype}.

\begin{corollary}\label{Ckeyvalue}
The Ekedahl-Oort type $\nu$ of $X_q$ has $2^{n-1}$ break points;
in other words, the sequence $\nu_i$ alternates between being constant and increasing on $2^{n-1}$ intervals for $1 \leq i \leq g$.  
This pattern is consistent for all primes $p$, although the formulae for the break points depends on $p$.
\end{corollary}

\begin{proof}
By Theorem \ref{Torbit}, the canonical filtration is constructed by successively adjoining the blocks $B_t$.  The behavior of $F$ and $V$ is consistent across each block.
Thus there are $2^n$ canonical fragments, the first half of which determine break points of $\nu$. 
\end{proof}

\subsection{Indecomposable factors of $\DD(X_{p^n})$ with $a$-number $1$}

For $c \in \NN$, recall from Example \ref{Eorbit1} that $I_{c,1}$ is the unique symmetric ${\rm BT}_1$ group scheme of rank $p^{2c}$ having $p$-rank 0 and $a$-number 1. 
In this section, we find the multiplicity of $\DD(I_{c,1})=\EE/\EE(F^c + V^c)$ in $\DD(X_{p^n})$.
As motivation, note that $\DD(I_{1,1})$ occurs in $\DD(X_{p^n})$ exactly when there is a block $B_t$ such that $F(B_t)=V(B_t)$.  
This can only occur when $n$ is even and $t=(2^{n+1}+2)/3$, in which case the orbit is $\sigma=(t/2, t)$.

We will need the following result about multiplicities of short orbits.
If $W$ is an indecomposable factor of $\DD(X_{p^c})$ and if $n=ck$ for some odd $k \in \NN$, then
$W$ is an indecomposable factor of $\DD(X_{p^n})$ associated with a short orbit by Lemma \ref{Lup}.
%Lemma \ref{Ldown} shows that all indecomposable factors of $\DD(X_{p^n})$ 
%associated with short orbits occur in this way.
The next result compares the multiplicity of $W$ in $\DD(X_{p^c})$ and $\DD(X_{p^n})$.

\begin{proposition} \label{Pmultup}
Suppose $n=ck$ for $k \in \NN$ odd and let $L=(2^n+1)/(2^c+1)$.
The multiplicity $M(\sigma)$ of $\DD(\sigma)$ in $\DD(X_{p^c})$ and
the multiplicity $M(\sigma_L)$ of $\DD(\sigma_L)$ in $\DD(X_{p^n})$
are related by the formula: $M(\sigma_L)=M(\sigma)^k$.
\end{proposition}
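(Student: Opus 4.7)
\begin{prf}{Proof plan}
The approach is to reduce the multiplicity identity to a combinatorial claim about the augmented binary vectors $\vec{b}^{\rm aug}$ of Lemma \ref{rem:countingblocks}, then prove that claim by expanding $Lt' \pmod{2^n+1}$ in base $2^c$.

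First I would identify each multiplicity with the common block dimension in its orbit. By Theorem \ref{TblockactionFV}, $F$ and $V$ induce isomorphisms between consecutive blocks $B_t$ within any $\Ttwo$-orbit, so all $B_t$ in a single orbit share the same $k$-dimension. Combined with $\dim \DD(\sigma) = |\sigma|$ from Lemma \ref{LDsigma} and the identity $\sum_{t \in \sigma} \dim B_t = M(\sigma)|\sigma|$, this forces $M(\sigma) = \dim B_t$ for any $t \in \sigma$; the same reasoning (together with $|\sigma_L|=|\sigma|$ from Lemma \ref{Lup}) gives $M(\sigma_L) = \dim B_{Lt'}$ for any $t' \in \sigma$. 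Thus the target identity becomes $\dim B_{Lt'} = (\dim B_{t'})^k$, and applying Lemma \ref{rem:countingblocks} with $\alpha = p(p+1)/2$, $\beta = p(p-1)/2$ reduces the problem to the assertion that the transition word $\tau(Lt') \in \{s,d\}^n$ of $\vec{b}^{\rm aug}(Lt')$ coincides with the $k$-fold concatenation $\tau(t')^k$ of the transition word $\tau(t') \in \{s,d\}^c$ of $\vec{b}^{\rm aug}(t')$.

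Next I would compute the base-$2^c$ expansion of $Lt'$. Since $k$ is odd, $L(2^c+1)=2^n+1$ yields $L = 1 - 2^c + 2^{2c} - \cdots + 2^{(k-1)c}$, hence $Lt' = \sum_{j=0}^{k-1}(-1)^j 2^{jc} t'$. For $t' \in \{1, \ldots, 2^c-1\}$, resolving the negative terms by borrowing (each $-t'$ at an odd position borrows $1$ from the $+t'$ at the next position) produces the standard low-to-high base-$2^c$ digit sequence
\[
Lt' = [\, t',\ 2^c-t',\ t'-1,\ 2^c-t',\ t'-1,\ \ldots,\ 2^c-t',\ t'-1 \,]
\]
with $k$ digits all in $\{0,\ldots,2^c-1\}$. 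The boundary case $t'=2^c$ is handled via $Lt' \equiv -L \pmod{2^n+1}$ and the elementary invariance $\vec{b}^{\rm aug}(t) = \vec{b}^{\rm aug}(2^n+1-t)$, which preserves transition counts.

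Finally I would unfold this digit sequence to obtain the binary expansion of $Lt'$ and insert it into the explicit description of $\vec{b}^{\rm aug}$ extracted from the $T$-map: for $Lt'$ odd, $\vec{b}^{\rm aug}(Lt') = (1,(Lt')_{n-1},\ldots,(Lt')_{1},0)$, with the even case reduced to the odd one by the symmetry above. Comparing, block by block, the transitions within each $c$-bit block and across each block boundary to those of $\vec{b}^{\rm aug}(t')$ shows that each block contributes exactly one copy of $\tau(t')$, giving $\tau(Lt') = \tau(t')^k$. Consistency of the endpoints is automatic: $\tau(t')$ contains an odd number of $d$s (since $\vec{b}^{\rm aug}(t')$ runs from $1$ to $0$) and $k$ is odd, so $\tau(t')^k$ still ends at $0$. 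The main obstacle is matching the boundary transitions: one must track, using the bit-level relationship between $t'$, $2^c-t'$, and $t'-1$ (all governed by the position of the lowest set bit of $t'$), that the transition at every junction between consecutive blocks agrees with the corresponding interior or endpoint transition of $\vec{b}^{\rm aug}(t')$. An induction on $k$ stepping $k \mapsto k+2$, verifying that the added pair of digits $[\,2^c-t',\ t'-1\,]$ contributes exactly one additional copy of $\tau(t')$, is a natural way to organize this bookkeeping.
\end{prf}
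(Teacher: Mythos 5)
Your proposal is correct and follows essentially the same route as the paper: identify each multiplicity with the common $k$-dimension of the blocks in the orbit, show that the binary expansion of $Lt'$ (equivalently $Lt'-1$) is the $k$-fold alternating concatenation of the expansion of $t'-1$ and its bitwise complement (your borrowing computation in base $2^c$ is just a rephrasing of the paper's identity $L=(2^c-1)(2^{n-2c}+\cdots+2^c)+1$), and conclude via the adjacency-counting formula of Lemma \ref{rem:countingblocks}. The paper streamlines the endpoint/boundary bookkeeping by taking $t'$ to be the minimal (hence odd, $<2^{c-1}$) element of the orbit, which also sidesteps your separate case $t'=2^c$.
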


\begin{proof}
Note that $M(\sigma)=\dime_k(B_t)$ where $t={\rm min}\{\sigma_i \in \sigma\}$. 
Also, $M(\sigma_L)=\dime_k(B_{Lt})$ because $Lt={\rm min}\{\sigma_i \in \sigma_L\}$. 
Since $t$ is odd, $\vec{b}(t) \in (\ZZ/2)^a$ is the binary expansion of $t-1$.
Note that 
%$L=2^{n-a} - 2^{n-2a} + \cdots - 2^a +1 
$L= (2^a-1)(2^{n-2a} +2^{n-4a} + \cdots + 2^a) + 1$.  
Now $t(2^a-1) = (t-1)2^a + 2^a-t$ has binary expansion $(\iota(\vec{b}(t)), \vec{b}(t)))$ of length $2a$.  
Thus $Lt-1 = t(2^a-1) (2^{n-2a} +2^{n-4a} + \cdots + 2^a) + (t-1)$  has binary expansion $(\vec{b}(t), \iota(\vec{b}(t)), \vec{b}(t), \ldots, \iota(\vec{b}(t)),\vec{b}(t))$, 
where the sequence has $k$ terms of length $a$.  As $t<2^{n-1}$ the result follows from Lemma \ref{rem:countingblocks}.
\end{proof}

Recall from Proposition \ref{PrankC} that the rank of $\car^i$ on $\Hw$ is $r_{n,i}=p^n(p+1)^i(p^{n-i}-1)/2^{i+1}$.

\begin{corollary} \label{Canumber1}
\begin{enumerate}
\item The Dieudonn\'e module $\DD(I_{n,1})$ occurs with multiplicity $r_{n,n-1}$ in $\DD(X_{p^n})$.

\item The Dieudonn\'e module $\DD(I_{c,1})$ appears as an indecomposable factor of $\DD(X_{p^n})$ if and only if $n=ck$ for some odd $k \in \NN$, 
in which case the multiplicity of $\DD(I_{c,1})$ in $\DD(X_{p^n})$ is $M(I_{c,1}):=(r_{c,c-1})^k$.

\item If $n \in \NN$ is even, then the multiplicity of $\DD(I_{1,1})$ in $\DD(X_{p^n})$ is zero.
If $n \in \NN$ is odd, then the multiplicity of $\DD(I_{1,1})$ in $\DD(X_{p^n})$ is $\left(p(p-1)/2\right)^n$.
\end{enumerate}
\end{corollary}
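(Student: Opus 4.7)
The plan is to apply Theorem \ref{Torbit} together with the short-orbit analysis of Section 5.3, reducing each statement to a bookkeeping computation with $m(\sigma) = \dim_k(B_{\sigma_i})$ via Lemma \ref{rem:countingblocks}.

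For part (1), Example \ref{Eorbit1} already identifies the orbit $\sigma = (1,2,\ldots,2^n, 2^n-1,\ldots,2^{n-1}+1)$ of $1$ under $\Ttwo$ as the one whose associated Dieudonn\'e module is $\EE/\EE(F^n+V^n) = \DD(I_{n,1})$. By Theorem \ref{Torbit}, the multiplicity equals $m(\sigma) = \dim_k(B_1)$. Under the bijection $T$ from Section \ref{Scongdec}, since $1$ is odd, $\vec{b} = T^{-1}(1) = (0,0,\ldots,0,1)$, so $\vec{b}^{\rm aug} = (1,0,\ldots,0,0)$ has $n_s = n-1$ adjacent equal pairs and $n_d = 1$ adjacent unequal pair. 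Lemma \ref{rem:countingblocks} then gives $\dim_k(B_1) = (p(p+1)/2)^{n-1}(p(p-1)/2) = p^n(p+1)^{n-1}(p-1)/2^n$, which matches $r_{n,n-1}$ from Proposition \ref{PrankC}.

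For part (2), the orbit in $\ZZ/(2^c+1)-\{0\}$ yielding $\DD(I_{c,1})$ is (again by Example \ref{Eorbit1}) the orbit $\sigma$ of $1$, which has length $2c$. If $\DD(I_{c,1})$ appears as a factor of $\DD(X_{p^n})$ with $c < n$, Lemma \ref{Ldown} forces $n = ck$ for some odd $k \in \NN$, and the corresponding orbit of $\ZZ/(2^n+1)-\{0\}$ is $\sigma_L$ with $L = (2^n+1)/(2^c+1)$; conversely, Lemma \ref{Lup} produces such a $\sigma_L$ with $\DD(\sigma_L) \simeq \DD(\sigma) = \DD(I_{c,1})$ whenever $n=ck$ with $k$ odd. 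Proposition \ref{Pmultup} then yields the multiplicity $M(\sigma_L) = M(\sigma)^k = (r_{c,c-1})^k$, using part (1) applied with $n$ replaced by $c$.

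Part (3) is the specialization of part (2) to $c=1$: $\DD(I_{1,1})$ occurs precisely when $n = 1 \cdot k$ with $k$ odd, i.e.\ when $n$ is odd, in which case the multiplicity is $(r_{1,0})^n = (p(p-1)/2)^n$; for even $n$ the multiplicity is $0$. The main obstacle is really just part (1), where one must carefully identify the binary vector $T^{-1}(1)$ and count adjacencies in $\vec{b}^{\rm aug}$; once the dimension of $B_1$ is shown to agree with $r_{n,n-1}$, parts (2) and (3) follow formally from the short-orbit bijection and the power-law for multiplicities.
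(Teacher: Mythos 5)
Your proof is correct and follows essentially the same route as the paper: Example \ref{Eorbit1} plus Theorem \ref{Torbit} for part (1), Lemmas \ref{Lup} and \ref{Ldown} with Proposition \ref{Pmultup} for part (2), and specialization to $c=1$ for part (3). The only (harmless) difference is in part (1), where you compute $\dim_k(B_1)$ directly from $T^{-1}(1)=(0,\ldots,0,1)$ and Lemma \ref{rem:countingblocks}, while the paper identifies $B_1=V^nB_{2^n}$ and quotes the Cartier-rank formula of Proposition \ref{PrankC}; both give $r_{n,n-1}$.
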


\begin{remark}
Corollary \ref{Canumber1} is equivalent to the fact that ${\rm Ker}(F^n)={\rm Ker}(V^n)$ has dimension $2g-r_{n,n-1}$ in $H^1_{dR}(X_{p^n})$ or the fact that 
${\rm Im}(F^n)={\rm Im}(V^n)$ has dimension $r_{n,n-1}$ in $H^1_{dR}(X_{p^n})$.
\end{remark}

\begin{proof}
\begin{enumerate}
\item By Example \ref{Eorbit1}, $\DD(I_{n,1})=\DD(\sigma)$ for the orbit $\sigma$ containing $1$. 
Then $M(\sigma)$ equals the dimension of $B_1=V^n B_{2^n}$, which equals the rank $r_{n,n-1}$ of $\car$ on $\Hw$.

\item By part 1, one can suppose that $1 \leq c < n$.  Then ${\rm rank}(\DD(I_{c,1})) < p^{2n}$.
Thus, if $\DD(I_{c,1})$ occurs in $\DD(X_{p^n})$, 
then $\DD(I_{c,1})=\DD(\hat{\sigma})$ for a short orbit $\hat{\sigma}$ of 
$\ZZ/(2^n+1) -\{0\}$.  By Lemma \ref{Ldown}, $n=ck$ for some $k \in \NN$ odd.
Suppose $n=ck$ for some $k \in \NN$ odd.  By part 1, $\DD(I_{c,1})$ appears in 
$\DD(X_{p^c})$ with multiplicity $r_{c,c-1}$.  The result then follows from Lemma \ref{Lup} and Proposition \ref{Pmultup}.

\item This follows from part 2, setting $c=1$.
\end{enumerate}
\end{proof}

As an example, consider the case $n=4$, which involves the rank 8 group scheme $I_{4,3}$ from Example \ref{EI43}.

\begin{example} \label{Ecasen=4}
The Dieudonn\'e module $\DD(X_{p^4})$ of ${\rm Jac}(X_{p^4})[p]$ is: 
\begin{equation}
\DD(X_{p^4}) = (\EE/\EE(F^4+V^4))^{r_{4,3}} \oplus (\DD(I_{4,3}))^{r_{4,1}-3r_{4,3}}.
\end{equation}
\end{example}

\begin{proof}
The orbit $\sigma=\{1,2,4,8,16,15,13,9\}$ has $\DD(\sigma)=\DD(I_{4,1})$.
The multiplicity of $\DD(I_{4,1})$ is determined by Corollary \ref{Canumber1}(1).
There is one other orbit $\sigma'=\{3,6,12,7,14,11,5,10\}$ of $\Ttwo$ on $\ZZ/17$.
By Example \ref{EI43}, $\DD(\sigma')=\DD(I_{4,3})$.
The multiplicity of $\DD(I_{4,3})$ equals $(2g-8r_{4,3})/8$. 
\end{proof}

\section{Applications} \label{Sapplication}

\subsection{Decomposition of Jacobians of Hermitian curves} \label{SJacDec}

The fact that ${\rm Jac}(X_{p^n})$ is supersingular is equivalent to the fact that it decomposes, up to isogeny, into a product of supersingular elliptic curves:
\[{\rm Jac}(X_{p^n}) \sim \times_{i=1}^g E_i.\] 
A more refined problem is about the decomposition of ${\rm Jac}(X_{p^n})$ up to isomorphism.  Consider an isomorphism
\[{\rm Jac}(X_{p^n}) \simeq \times_{i=1}^N A_i\]
of abelian varieties without polarization, where each $A_i$ is indecomposable and $g=\sum_{i=1}^N \dime(A_i)$.

When $n=1$, Section \ref{Ecasen=1} and \cite[Theorem 2]{oort75} imply that the Jacobian of $X_p$ is isomorphic to a product of supersingular elliptic curves: 
\[{\rm Jac}(X_p) \simeq \times_{i=1}^g E_i.\]
For $n \geq 2$, we did not find any results about the decomposition of ${\rm Jac}(X_{p^n})$ up to isomorphism in the literature.
In this section, we use Theorem \ref{Torbit} to provide constraints on this decomposition.

\subsubsection{Elliptic rank}

If $A$ is an abelian variety, its {\it elliptic rank} is the largest non-negative integer $r$ such that there exist elliptic curves 
$E_1, \ldots, E_r$ and an abelian variety $B$ of dimension $g-r$ and an isomorphism $A \simeq B \times (\times_{i=1}^r E_i)$ of abelian varieties without polarization.

\begin{application} \label{Aellipticrank}
If $n$ is even, then the elliptic rank of ${\rm Jac}(X_{p^n})$ is $0$.
If $n$ is odd, then the elliptic rank of ${\rm Jac}(X_{p^n})$ is at most $\left(p(p-1)/2\right)^n$.
\end{application}

In fact, the elliptic rank of ${\rm Jac}(X_{p^n})$ is exactly $\left(p(p-1)/2\right)^n$ when $n$ is odd.  The proof of this will be left for a later paper.

\begin{proof}
If ${\rm Jac}(X_{p^n}) \simeq B \times (\times_{i=1}^r E_i)$, then each $E_i$ is supersingular and $\DD(E_i) \simeq \EE/\EE(F+V)$.
The result follows from Corollary \ref{Canumber1}(3) since the elliptic rank is bounded by the multiplicity of $\DD(I_{1,1})$ in $\DD(X_{p^n})$.
% this multiplicity equals $0$ when $n$ is even 
%and equals $(\frac{p(p-1)}{2})^n$ when $n$ is odd.
\end{proof}

\subsubsection{A partition condition on the decomposition}

We determine a partition condition on the decomposition of the Jacobian ${\rm Jac}(X_{p^n})$ up to isomorphism, starting with a simple-to-state application.

\begin{application} \label{Apower2}
Suppose $n=2^e$ for some $e \in \NN$ and suppose ${\rm Jac}(X_{p^n}) \simeq \times_{i=1}^N A_i$.
Then $n \mid {\rm dim}(A_i)$ for $1 \leq i \leq N$ and $N \leq g/n$.
In particular, when $n=2$, then ${\rm dim}(A_i)$ is even for all $1 \leq i \leq N$.
\end{application}

\begin{proof}
If $n=2^e$, then all orbits $\sigma$ of $\ZZ/(2^n +1) -\{0\}$ have length exactly $2n$.
By Lemma \ref{LDsigma}, $\dime(\DD(\sigma))=2n$.
Also $\DD(A_i)$ has dimension $2\dime(A_i)$ and is a direct sum of
Dieudonn\'e modules of dimension $2n$.
\end{proof}

\begin{definition} \label{Dpartition}
Consider two partitions $\eta_J$ and $\eta_{\DD}$ defined as follows.
If $J \simeq \times_{i=1}^N A_i$,
where each $A_i$ is an indecomposable abelian variety, let
$\eta_J = \{{\rm dim}(A_i) \mid 1 \leq i \leq N\}$.
If $\DD(X_{p^n})= \oplus_{i=1}^{\delta} D_i$, where each $D_i$ is an indecomposable symmetric 
Dieudonn\'e module, let 
$\eta_{\DD} = \{{\rm dim}(D_i) \mid 1 \leq i \leq \delta\}$. 
\end{definition}

It is clear that the partition $\eta_{\DD}$ is a refinement of the partition $\eta_J$.
For any $q$, this observation can be used to compute a lower bound for the partition $\eta_J$
which is the set of dimensions of the indecomposable factors in the decomposition of ${\rm Jac}(X_{p^n})$ up to isomorphism.  
%In Application \ref{Apower2}, when $n$ is a power of $2$, then the partition contains only the value $n$. 
In particular, this yields the upper bound $N \leq \sum_{\sigma} m(\sigma)$.
For example, when $n=3$, then $N \leq g-2r_{3,2} \sim g/2$.

%Is this wrong?
%\begin{example} \label{Eoddprime}
%If $n=\ell$ is an odd prime, let $N_1=(\frac{p(p-1)}{2})^\ell$ and $N_2=\frac{g-(\frac{p(p-1)}{2})^\ell}{\ell}$.
%Then \[\DD(X_{p^\ell})=\DD(I_{1,1})^{N_1} \oplus \DD(I_{\ell,1})^{N_2}.\]
%The partition $\eta_{\DD}$ of $\DD(X_{p^n})$ consists of
%the number $1$ occuring $N_1$ times and the number $\ell$ occuring $N_2$ times.
%It is refinement of $\eta_J$.
%\end{example}

%The $a$-number of $A_i[p]$ is at least $1$ since its $p$-rank is $0$.  This implies that $N$ is bounded by the $a$-number $a_n \sim g/2$.

%Try two upper bounds: one using complete description of short orbits and the other using complete description of $a$-number $1$ and no $a$-number $2$.

%Here is another bound for $N$.

%First, we count the number of Dieudonne modules with a-number 1 which can show up.  If $n=kc$, consider the Dieudonne module $I_{c,1}$.  It appears with multiplicity $(r_{c,c-1})^k$.  So the total number of pieces of the DM with a-number 1 is

%\[M_1= sum_{c \mid n} (r_{c,c-1})^k,\] 

%and the total contribution to the genus is

%\[R_1=sum_{c \mid n} 2c (r_{c,c-1})^k\].

%Now if an indecomposable Dieudonne module with a-number $\geq 3$ shows up, then its rank must be at least 4.  Worst case scenario is that they all have rank 4.  So, the total number of pieces of the DM with a-number $> 1$ is

%\[M_2 \leq (2g-R_1)/8\].

%So a bound on the number of factors of the Jacobian is $M_1+M_2$.

\subsection{Application to Selmer groups} \label{SSelmer}

Let $A$ be an abelian variety defined over the function field $K$ of $X_q$ with $q=p^n$.  
Let $f : A \to A'$ be an isogeny of abelian varieties over $K$.
Recall that the Tate-Shafarevich group $\Sha(K,A)$ is the kernel of $H^1(K, A) \to \prod_{v} H^1(K_v, A)$ where the product is taken over all places $v$ of $K$.
Let $\Sha(K, A)_f$ be the kernel of the induced map $\Sha(K, A) \to \Sha(K, A')$.
Also define the local Selmer group ${\rm Sel}(K_v, f)$ to be the image of the coboundary map $A'(K_v) \to H^1(K_v, \Ker(f))$ 
and the global Selmer group to be the subset of $H^1(K, \Ker(f))$ which restrict to elements of ${\rm Sel}(K_v, f)$ for all $v$.   
There is an exact sequence 
\[0 \to A'(K)/f(A(K)) \to {\rm Sel}(K,f) \to \Sha(K, A)_f \to 0.\]

In \cite[Theorems 1 \& 2]{dum99}, the author determines the group structure of $\Sha$ in the case when $A$ is ${\rm Jac}(X_q)$ or $A$ is a
supersingular elliptic factor of ${\rm Jac}(X_q)$.  Here is a quick application about this topic.

\begin{application} \label{ASelmer}
Let $E$ be a constant elliptic curve over the function field $K$ of $X_q$.  
\begin{enumerate}
\item If $E$ is ordinary, then ${\rm Sel}(K, [p])$ has rank $2r_{n,1}=p^n(p+1)(p^{n-1}-1)/2$.
\item If $E$ is supersingular, then ${\rm Sel}(K, [p])$ has rank $0$ if $n$ is even and rank $\left(p(p-1)/2\right)^n$ if $n$ is odd.
\end{enumerate}
\end{application}

\begin{proof}
\begin{enumerate}
\item
The result follows from Proposition \ref{PrankC} because the rank of ${\rm Sel}(K, [p])$ is twice the rank of $\car$ \cite[Proposition 3.3]{Ulmer}.  

\item
The result follows from Corollary \ref{Canumber1}(3) because
 the rank of ${\rm Sel}(K, [p])$ is the dimension of $\Ker(F+V)$ on $\Hdr$ \cite[Proposition 4.3]{Ulmer}.  
\end{enumerate}
\end{proof}

\subsection{Application about the supersingular locus} \label{Ssupersingular}

The moduli space ${\mathcal A}_g$ of principally polarized abelian varieties of 
dimension $g$ can be stratified by Ekedahl-Oort type into locally closed strata.
By \cite[Lemma 10.13]{Ohandbook}, the stratum for the Ekedahl-Oort type $\nu$ is contained in the supersingular locus $S_g$ if and only if $\nu_{s}=0$ where $s=\lceil g/2 \rceil$. 
%There are partial results about which Ekedahl-Oort strata do not intersect the supersingular locus (unpublished).

Each generic point of $S_g$ has $a$-number $1$ \cite[Section 4.9]{LO}. 
By Example \ref{Eorbit1}, the unique Ekedahl-Oort type with $p$-rank $0$ and $a$-number $1$ has $\nu_s =s-1$ which is not zero for $g \geq 3$.
Thus this Ekedahl-Oort stratum intersects but is not contained in $S_g$.  

For all $p$, we give infinitely many new examples of Ekedahl-Oort strata which intersect but are 
not contained in $S_g$.  What is significant is that each has large $a$-number, 
namely just a bit smaller than $g/2$.
Note that $a \leq \lfloor (g-1)/2 \rfloor$ is the smallest upper bound for $a$ which guarantees 
that $\nu_s \not = 0$.  
 
\begin{application}
Let $q=p^n$ with $n \geq 3$ and let $g=q(q-1)/2$.
The Hermitian curve $X_q$ has $a$-number $\frac{g}{2}[1-\frac{p}{q}\frac{p^{n-2}-1}{q-1}]$.
Its Ekedahl-Oort stratum intersects, but is not contained in, the 
supersingular locus of $\mathcal{A}_g$.
\end{application}

\begin{proof}
The Jacobian of the Hermitian curve $X_{p^n}$ is supersingular and has dimension $g$.
Let $\nu$ be its Ekedahl-Oort type 
and let $\eta$ be the strata of ${\mathcal A}_g$ with Ekedahl-Oort type $\nu$.
By Proposition \ref{PrankC}, $\nu_i=0$ if and only if $i \leq r_{n, n-1}=p^n(p+1)^{n-1}(p-1)/2^{n}$.  
By \cite[Lemma 10.13]{Ohandbook}, $\eta \subset S_g$ if and only if $\nu_{s}=0$ where
$s=\lceil g/2 \rceil$.  This condition is not satisfied for $n \geq 3$.  
\end{proof}

\bibliographystyle{amsalpha}
\bibliography{Hermitian}

Rachel Pries, Colorado State University, Fort Collins, CO 80521, pries@math.colostate.edu

Colin Weir, Simon Fraser University, Vancouver, BC, Canada,  V5A 1S6, colin\_weir@sfu.ca

\end{document}